\numberwithin{equation}{section}
\newtheorem{theorem}{Theorem}[section]
\newtheorem{corollary}{Corollary}[section]
\newtheorem{proposition}{Proposition}[section]
\newtheorem{lemma}{Lemma}[section]
\theoremstyle{definition}
\newtheorem{remark}{Remark}[section]
\newtheorem{assumption}{Assumption}
\newcommand{\ie}{{\em i.e.}, }
\newcommand{\s}{\sigma}
\newcommand{\eg}{{\em e.g.}, }
\newcommand{\cf}{\emph{cf.\ }}
\newcommand{\prox}{\mathrm{prox}} 
\newcommand{\nn}{\mathbb{N}} 
\newcommand{\rr}{\mathbb{R}} 
\newcommand{\norm}[1]{\left\Vert {#1} \right\Vert} 
\newcommand{\dom}[1]{\mathrm{dom}\,{#1}} 
\newcommand{\gr}[1]{\mathrm{graph}\,{#1}} 
\newcommand{\crit}[1]{\mathrm{crit}\,{#1}}
\newcommand{\dist}{\mathrm{dist}} 
\newcommand{\act}[1]{\left\langle {#1} \right\rangle} 
\newcommand{\seq}[2]{\left\{{#1}_{{#2}}\right\}_{{#2} \in \mathbb{N}}}
\newcommand{\Seq}[2]{\left\{{#1}^{{#2}}\right\}_{{#2} \in \mathbb{N}}}
\newcommand{\argmin}{\operatorname{argmin}}
\newcommand{\limitinf}[2]{\liminf_{{#1} \rightarrow {#2}}}
\newcommand{\bx}{{\bf x}}
\newcommand{\bz}{{\bf z}}
\newcommand{\bu}{{\bf u}}
\newcommand{\bw}{{\bf w}}
\newcommand{\bo}{{\bf 0}}
\newcolumntype{C}[1]{>{\centering\let\newline\\\arraybackslash\hspace{0pt}}m{#1}}
\title{Inertial Proximal Alternating Linearized Minimization (iPALM)
  for Nonconvex and Nonsmooth Problems} \date{} \author{Thomas
  Pock\footnote{Institute of Computer Graphics and Vision, Graz
    University of Technology, 8010 Graz, Austria. Digital Safety \&
    Security Department, AIT Austrian Institute of Technology GmbH,
    1220 Vienna, Austria. E-mail: \texttt{pock@icg.tugraz.at}. Thomas
    Pock acknowledges support from the Austrian science fund (FWF)
    under the project EANOI, No. I1148 and the ERC starting grant
    HOMOVIS, No. 640156.} \and Shoham Sabach\footnote{Department of
    Industrial Engineering and Management, Technion---Israel Institute
    of Technology, Haifa 3200003, Israel. E-mail:
    ssabach@ie.technion.ac.il}}
\begin{document}
\maketitle

    \begin{abstract}
		In this paper we study nonconvex and nonsmooth optimization problems with semi-algebraic data, 
		where the variables vector is split into several blocks of variables. The problem consists of 
		one smooth function of the entire variables vector and the sum of nonsmooth functions for each 
		block separately. We analyze an inertial version of the Proximal Alternating Linearized 
		Minimization (PALM) algorithm and prove its global convergence to a critical point of the 
		objective function at hand. We illustrate our theoretical findings by presenting numerical 
		experiments on blind image deconvolution, on sparse non-negative matrix factorization and on 
		dictionary learning, which demonstrate the viability and effectiveness of the proposed method.
    \end{abstract}

    	\noindent{\bf Key words:} Alternating minimization, blind image deconvolution, block coordinate 
    		descent, heavy-ball method, Kurdyka-{\L}ojasiewicz property, nonconvex and nonsmooth 
    		minimization, sparse non-negative matrix factorization, dictionary learning.


\section{Introduction} \label{Sec:Intorduction}

	In the last decades advances in convex optimization have significantly influenced scientific fields 
	such as image processing and machine learning, which are dominated by computational approaches. 
	However, it is also known that the framework of convexity is often too restrictive to provide good 
	models for many practical problems. Several basic problems such as blind image deconvolution are 
	inherently nonconvex and hence there is a vital interest in the development of efficient and simple 
	algorithms for tackling nonconvex optimization problems.
\medskip

	A large part of the optimization community has also been devoted to the development of general 
	purpose solvers \cite{Nocedal06}, but in the age of big data, such algorithms often come to their 
	limits since they cannot efficiently exploit the structure of the problem at hand. One notable 
	exception is the general purpose limited quasi Newton method \cite{LiuNocedal89} which has been 
	published more than 25 years ago but still remains a competitive method.
\medskip

	A promising approach to tackle nonconvex problems is to consider a very rich the class of problems 
	which share certain structure that allows the development of efficient algorithms. One such class of 
	nonconvex optimization problems is given by the sum of three functions:
	\begin{equation} \label{eq:problem}
		\min_{\bx = \left(x_{1} , x_{2}\right)} F\left(\bx\right) := f_{1}\left(x_{1}\right) + f_{2}
		\left(x_{2}\right) + H\left(\bx\right),
	\end{equation}
	where $f_{1}$ and $f_{2}$ are assumed to be general nonsmooth and nonconvex functions with 
	efficiently computable proximal mappings (see exact definition in the next section) and $H$ is a 
	smooth coupling function which is required to have only partial Lipschitz continuous gradients 
	$\nabla_{x_{1}} H$ and $\nabla_{x_{2}} H$ (it should be noted that $\nabla_{\bx} H$ might not be a 
	Lipschitz continuous). 
\medskip

	Many practical problems frequently used in the machine learning and image processing communities 
	fall into this class of problems. Let us briefly mention two classical examples (another example 
	will be discussed in Section \ref{Sec:Numerics}).
\medskip

	The first example is Non-negative Matrix Factorization (NMF) \cite{PT1994,LS1999}. Given a 
	non-negative data matrix $A \in \rr_{+}^{m \times n}$ and an integer $r > 0$, the idea is to 
	approximate the matrix $A$ by a product of again non-negative matrices $BC$, where $B \in \rr_{+}^{m 
	\times r}$ and $C \in \rr_{+}^{r \times n}$. It should be noted that the dimension $r$ is usually 
	much smaller than $\min\{ m , n \}$. Clearly this problem is very difficult to solve and hence 
	several algorithms have been developed (see, for example, \cite{SD2006}). One possibility to solve 
	this problem is by finding a solution for the non-negative least squares model given by
	\begin{equation} \label{eq:NNMF}
		\min_{B , C} \tfrac12\norm{A - BC}_{F}^{2}, \quad \text{s.t.} \; B \geq 0, \; C \geq 0,
	\end{equation}  
	where the non-negativity constraint is understood pointwise and $\norm{\cdot}_{F}$ denotes the 
	classical Frobenius norm. The NMF has important applications in image processing (face recognition) 
	and bioinformatics (clustering of gene expressions). Observe that the gradient of the objective 
	function is not Lipschitz continuous but it is partially Lipschitz continuous which enables the 
	application of alternating minimization based methods (see \cite{BST2014}). Additionally, it is 
	popular to impose sparsity constraints on one or both of the unknowns, \eg $\norm{C}_{0} \leq c$, to 
	promote sparsity in the representation. See \cite{BST2014} for the first globally convergent 
	algorithm for solving the sparse NMF problem. As we will see, the complicated sparse NMF can be also 
	simply handled by our proposed algorithm which seems to produce better performances (see Section 
	\ref{Sec:Numerics}).
\medskip

	The second example we would like to mention is the important but ever challenging problem of blind 
	image deconvolution (BID) \cite{Levin2009}. Let $A \in \left[0 , 1\right]^{M \times N}$ be the 
	observed blurred image of size $M \times N$, and let $B \in \left[0 , 1\right]^{M \times N}$ be 
	the unknown sharp image of the same size. Furthermore, let $K \in \Delta_{mn}$ denote a small 
	unknown blur kernel (point spread function) of size $m \times n$, where $\Delta_{mn}$ denotes the 
	$mn$-dimensional standard unit simplex. We further assume that the observed blurred image has been
	formed by the following linear image formation model:
	\begin{equation*}			
		A = B \ast K + E,
	\end{equation*}
	where $\ast$ denotes a two dimensional discrete convolution operation and $E$ denotes a small 
	additive Gaussian noise. A typical variational formulation of the blind deconvolution problem is 
	given by:
	\begin{equation} \label{BID}
		\min_{U , K} \mathcal{R}\left(U\right) + \frac12\norm{B \ast K - A}_{F}^{2}, \quad \text{s.t.} 
		\; 0 \leq U \leq 1, \; K \in \Delta_{mn}.
	\end{equation}
 	In the above variational model, $\mathcal{R}$ is an image regularization term, typically a function, 
 	that imposes sparsity on the image gradient and hence favoring sharp images over blurred images.
\medskip

	We will come back to both examples in Section~\ref{Sec:Numerics} where we will show how the proposed 
	algorithm can be applied to efficiently solve these problems.
\medskip

	In \cite{BST2014}, the authors proposed a proximal alternating linearized minimization method (PALM) 
	that efficiently exploits the structure of problem \eqref{eq:problem}. PALM can be understood as a 
	blockwise application of the well-known proximal forward-backward algorithm \cite{LM79,CW05} in the 
	nonconvex setting. In the case that the objective function $F$ satisfy the so-called 
	Kurdyka-{\L}ojasiewicz (KL) property (the exact definition will be given in Section 
	\ref{Sec:Preli}), the whole sequence of the algorithm is guaranteed to converge to a critical point 
	of the problem.
\medskip

	In this paper, we propose an inertial version of the PALM algorithm and show convergence of the 
	whole sequence in case the objective function $F$ satisfy the KL property. The inertial term is 
	motivated from the Heavy Ball method of Polyak \cite{Polyak64} which in its most simple version 
	applied to minimizing a smooth function $f$ and can be written as the iterative scheme
	\begin{equation*}
		x^{k + 1} = x^{k} - \tau\nabla f\left(x^{k}\right) + \beta\left(x^{k} - x^{k - 1}\right),
	\end{equation*}
	where $\beta$ and $\tau$ are suitable parameters that ensure convergence of the algorithm. The heavy 
	ball method differs from the usual gradient method by the additional inertial term $\beta\left(x^{k} 
	- x^{k - 1}\right)$, which adds part of the old direction to the new direction of the algorithm. 
	Therefore for $\beta = 0$, we completely recover the classical algorithm of unconstrained 
	optimization, the Gradient Method. The heavy ball method can be motivated from basically three view 
	points.
\medskip

	First, the heavy ball method can be seen as an explicit finite differences discretization of the 
	heavy ball with friction dynamical system (see \cite{AlvarezAttouch2001}):
	\begin{equation*}
		\ddot x\left(t\right) + c\dot x\left(t\right) + g\left(x\left(t\right)\right) = 0,
	\end{equation*}
	where $x\left(t\right)$ is a time continuous trajectory, $\ddot x\left(t\right)$ is the 
	acceleration, $c \dot x\left(t\right)$ for $c > 0$ is the friction (damping), which is proportional 
	to the velocity $\dot x\left(t\right)$, and $g\left(x\left(t\right)\right)$ is an external 
	gravitational field. In the case that $g = \nabla f$ the trajectory $x\left(t\right)$ is running 
	down the ``energy landscape'' described by the objective function $f$ until a critical point 
	($\nabla f = 0$) is reached. Due to the presence of the inertial term, it can also overcome 
	spurious critical points of $f$, \eg saddle points.
\medskip

	Second, the heavy ball method can be seen as a special case of the so-called multi-step algorithms 
	where each step of the algorithm is given as a linear combination of all previously computed 
	gradients \cite{Drori2013}, that is, algorithm of the following form
	\begin{equation*}
		x^{k + 1} = x^{k} - \sum_{i = 0}^{k} \alpha_{i}\nabla f\left(x^{i}\right).
	\end{equation*}
	Let us note that in the case that the objective function $f$ is quadratic, the parameters 
	$\alpha_{i}$ can be chosen in a way such that the objective function is minimized at each step. This 
	approach eventually leads to the Conjugate Gradient (CG) method, pointing out a close relationship 
	to inertial based methods.
\medskip

	Third, accelerated gradient methods, as pioneered by Nesterov (see \cite{N04} for an overview), are 
	based on a variant of the heavy ball method that use the extrapolated point (based on the inertial 
	force) also for evaluating the gradient in the current step. It turns out that, in the convex 
	setting, these methods improve the worst convergence rate from $\mathcal{O}(1/k)$ to $\mathcal{O}(1/
	k^2)$, while leaving the computational complexity of each step basically the same.
\medskip

	In \cite{ZK93}, the heavy ball method has been analyzed for the first time in the setting of 
	nonconvex problems. It is shown that the heavy ball method is attracted by the connected components 
	of critical points. The proof is based on considering a suitable Lyapunov function that allows to 
	consider the two-step algorithm as a one-step algorithm. As we will see later, our convergence proof 
	is also based on rewriting the algorithm as a one-step method.
\medskip

	In \cite{OCBP2014}, the authors developed an inertial proximal gradient algorithm (iPiano). The 
	algorithm falls into the class of forward-backward splitting algorithms \cite{CW05}, as it performs 
	an explicit forward (steepest descent) step with respect to the smooth (nonconvex) function followed 
	by a (proximal) backward step with respect to the nonsmooth (convex) function. Motivated by the 
	heavy ball algorithm mentioned before, the iPiano algorithm makes use of an inertial force which 
	empirically shows to improve the convergence speed of the algorithm. A related method based on 
	general Bregman proximal-like distance functions has been recently proposed in \cite{Bot2015}.
\medskip

	Very recently, a randomized proximal linearization method has been proposed in \cite{Xu2014}. The 
	method is closely related to our proposed algorithm but convergence is proven only in the case that 
	the function values are strictly decreasing. This is true only if the inertial force is set to be 
	zero or the algorithm is restarted whenever the function values are not decreasing. In this paper, 
	however, we overcome this major drawback and prove convergence of the algorithm without any 
	assumption on the monotonicity of the objective function.
\medskip

	The remainder of the paper is organized as follows. In Section \ref{Sec:Problem} we give an exact 
	definition of the problem and the proposed algorithm. In Section \ref{Sec:Preli} we state few 
	technical results that will be necessary for the convergence analysis, which will be presented in 
	Section \ref{Sec:Convergence}. In Section \ref{Sec:Numerics} we present some numerical results and 
	analyze the practical performance of the algorithm in dependence of its inertial parameters.

\section{Problem Formulation and Algorithm} \label{Sec:Problem}
	In this paper we follow \cite{BST2014} and consider the broad class of nonconvex and nonsmooth
	problems of the following form
    \begin{equation} \label{Model}
        \mbox{minimize } F\left(\bx\right) := f_{1}\left(x_{1}\right) + f_{2}\left(x_{2}\right) + H
        \left(\bx\right) \mbox{ over all } \bx = \left(x_{1} , x_{2}\right) \in \rr^{n_{1}} \times
        \rr^{n_{2}},
    \end{equation}
    where $f_{1}$ and $f_{2}$ are extended valued (\ie giving the possibility of imposing constraints 
    separately on the blocks $x_{1}$ and $x_{2}$) and $H$ is a smooth coupling function (see below for 
    more precise assumptions on the involved functions). We would like to stress from the beginning that 
    \textit{even though} all the discussions and results of this paper derived for two blocks of 
    variables $x_{1}$ and $x_{2}$, they hold true for any finite number of blocks. This choice was done 
    only for the sake of simplicity of the presentation of the algorithm and the convergence results.
\medskip

	As we discussed in the introduction, the proposed algorithm can be viewed either as a block version 
	of the recent iPiano algorithm \cite{OCBP2014} or as an inertial based version of the recent PALM 
	algorithm \cite{BST2014}. Before presenting the algorithm it will be convenient to recall the 
	definition of the Moreau proximal mapping \cite{M65}. Given a proper and lower semicontinuous 
	function $\s : \rr^{d} \rightarrow \left(-\infty , \infty\right]$, the proximal mapping associated 
	with $\s$ is defined by
	\begin{equation} \label{D:ProximalMap}
        \prox_{t}^{\s}\left(p\right) := \argmin \left\{ \s\left(q\right) + \frac{t}{2}\norm{q - p}^{2} : 
        \; q \in \rr^{d} \right\}, \quad \left(t > 0\right).
    \end{equation}
	Following \cite{BST2014}, we take the following as our blanket assumption.
    \begin{assumption} \label{AssumptionsA}
        \begin{itemize}
            \item[$\rm{(i)}$] $f_{1} : \rr^{n_{1}} \rightarrow \left(-\infty , \infty\right]$ and $f_{2} 
            		: \rr^{n_{2}} \rightarrow \left(-\infty , \infty\right]$ are proper and lower 
            		semicontinuous functions such that $\inf_{\rr^{n_{1}}} f_{1} > -\infty$ and 
            		$\inf_{\rr^{n_{2}}} f_{2} > -\infty$.
            \item[$\rm{(ii)}$] $H : \rr^{n_{1}} \times \rr^{n_{2}} \rightarrow \rr$ is differentiable 
            		and $\inf_{\rr^{n_{1}} \times \rr^{n_{2}}} F > -\infty$.
            \item[$\rm{(iii)}$] For any fixed $x_{2}$ the function $x_{1} \rightarrow H\left(x_{1} , 
	            x_{2}\right)$ is $C^{1,1}_{L_{1}(x_{2})}$, namely the partial gradient $\nabla_{x_{1}} H
	            \left(x_{1} , x_{2}\right)$ is globally Lipschitz with moduli $L_{1}\left(x_{2}\right)$, 
	            that is,
                \begin{equation*}
                    \norm{\nabla_{x_{1}} H\left(u , x_{2}\right) - \nabla_{x_{1}} H\left(v , x_{2}
                    \right)} \leq L_{1}\left(x_{2}\right)\norm{u - v}, \quad \forall \; u , v \in 
                    \rr^{n_{1}}.
                \end{equation*}
				Likewise, for any fixed $x_{1}$ the function $x_{2} \rightarrow H\left(x_{1} , x_{2}
                	\right)$ is assumed to be $C^{1,1}_{L_{2}(x_{1})}$.
            \item[$\rm{(iv)}$] For $i = 1 , 2$ there exists $\lambda_{i}^{-} , \lambda_{i}^{+} > 0$ 
            		such that
                	\begin{align}
                		\inf \left\{ L_{1}\left(x_{2}\right) : x_{2} \in B_{2} \right\} & \geq \lambda_{1}
                		^{-} \quad \text{and} \quad \inf \left\{ L_{2}\left(x_{1}\right) : x_{1} \in B_{1} 
                		\right\} \geq \lambda_{2}^{-}, \label{A:InfBound} \\
                    \sup \left\{ L_{1}\left(x_{2}\right) : x_{2} \in B_{2} \right\} & \leq \lambda_{1}
                    ^{+} \quad \text{and} \quad \sup \left\{ L_{2}\left(x_{1}\right) : x_{1} \in B_{1} 
                    \right\} \leq \lambda_{2}^{+}, \label{A:SupBound}
                \end{align}
                for any compact set $B_{i} \subseteq \rr^{n_{i}}$, $i = 1 , 2$.
            \item[$\rm{(v)}$] $\nabla H$ is Lipschitz continuous on bounded subsets of $\rr^{n_{1}} 
            		\times \rr^{n_{2}}$. In other words, for each bounded subset $B_{1} \times B_{2}$ of 
            		$\rr^{n_{1}} \times \rr^{n_{2}}$ there exists $M > 0$ such that:
                \begin{align*}
                    \norm{\left(\nabla_{x_{1}} H\left(x_{1} , x_{2}\right) - \nabla_{x_{1}} H\left(y_{1} 
                    , y_{2}\right) , \nabla_{x_{2}} H\left(x_{1} , x_{2}\right) - \nabla_{x_{2}} H
                    \left(y_{1} , y_{2}\right)\right)} & \nonumber \\
                    & \hspace{-1in} \leq M \norm{\left(x_{1} - y_{1} , x_{2} - y_{2}\right)}. 
                \end{align*}
        \end{itemize}
    \end{assumption}
    We propose now the inertial Proximal Alternating Linearized Minimization (iPALM) algorithm.
\vspace{0.2in}

	{\center\fbox{\parbox{16cm}{{\bf iPALM: Inertial Proximal Alternating Linearized Minimization}
		\begin{itemize}
        		\item[1.] Initialization: start with any $\left(x_{1}^{0} , x_{2}^{0}\right) \in \rr^{n_{1}} 
            		\times \rr^{n_{2}}$.
            	\item[2.] For each $k = 1 , 2 , \ldots$ generate a sequence $\left\{ \left(x_{1}^{k} , x_{2}
            		^{k}\right) \right\}_{k \in \nn}$ as follows:
            		\begin{itemize}
	               	\item[2.1.] Take $\alpha_{1}^{k} , \beta_{1}^{k} \in \left[0 , 1\right]$ and 
	                		$\tau_{1}^{k} > 0$. Compute
	                    	\begin{align} 
    			                	y_{1}^{k} & = x_{1}^{k} + \alpha_{1}^{k}\left(x_{1}^{k} - x_{1}^{k - 1}
    			                	\right), \label{iPALM:StepX1:1} \\
		                    	z_{1}^{k} & = x_{1}^{k} + \beta_{1}^{k}\left(x_{1}^{k} - x_{1}^{k - 1}
		                    	\right), \label{iPALM:StepX1:2} \\
	                        x_{1}^{k + 1} & \in \prox_{\tau_{1}^{k}}^{f_{1}}\left(y_{1}^{k} - \frac{1}
	                        {\tau_{1}^{k}}\nabla_{x_{1}} H\left(z_{1}^{k} , x_{2}^{k}\right)\right). 
	                        \label{iPALM:StepX1:3} 
    		                \end{align}
	                	\item[2.2.] Take $\alpha_{2}^{k} , \beta_{2}^{k} \in \left[0 , 1\right]$ and 
	                		$\tau_{2}^{k} > 0$. Compute
                		    	\begin{align} 
                    			y_{2}^{k} & = x_{2}^{k} + \alpha_{2}^{k}\left(x_{2}^{k} - x_{2}^{k - 1}
                    			\right), \label{iPALM:StepX2:1} \\
		                    	z_{2}^{k} & = x_{2}^{k} + \beta_{2}^{k}\left(x_{2}^{k} - x_{2}^{k	- 1}
		                    	\right), \label{iPALM:StepX2:2} \\
                    		    x_{2}^{k + 1} & \in \prox_{\tau_{2}^{k}}^{f_{2}}\left(y_{2}^{k} - \frac{1}
                    		    {\tau_{2}^{k}}\nabla_{x_{2}} H\left(x_{1}^{k + 1} , z_{2}^{k}\right)
                    		    \right). \label{iPALM:StepX2:3} 
                    \end{align}
        			\end{itemize}
        \end{itemize}}}}
\vspace{0.2in}

	The parameters $\tau_{1}^{k}$ and $\tau_{2}^{k}$, $k \in \nn$, are discussed in Section
	\ref{Sec:Convergence} but for now, we can say that they are proportional to the respective 
	partial Lipschitz moduli of $H$. The larger the partial Lipschitz moduli the smaller the step-size, 
	and hence the slower the algorithm. As we shall see below, the partial Lipschitz moduli $L_{1}
	\left(x_{2}\right)$ and $L_{2}\left(x_{1}\right)$ are explicitly available for the examples 
	mentioned at the introduction. However, note that if these are unknown, \textit{or 
	still too difficult to compute}, then a backtracking scheme \cite{BT09} can be incorporated and the 
	convergence results developed below remain true, for simplicity of exposition we omit the details.
\medskip	
	
	In Section \ref{Sec:Numerics}, we will show that the involved functions of the non-negative matrix 
	factorization model (see \eqref{eq:NNMF}) and of the blind image deconvulation model (see 
	\eqref{BID}) do satisfy Assumption \ref{AssumptionsA}. For the general setting we point out the 
	following remarks about Assumption \ref{AssumptionsA}.
	\begin{itemize}
		\item[$\rm{(i)}$] The first item of Assumption \ref{AssumptionsA} is very general and most of 
			the interesting constraints (via their indicator functions) or regularizing functions 
			fulfill these requirements. 
        	\item[$\rm{(ii)}$] Items (ii)-(v) of Assumption \ref{AssumptionsA} are beneficially exploited to 
        		build the proposed iPALM algorithm. These requirements do not guarantee that the gradient of 
        		$H$ is globally Lipschitz (which is the case in our mentioned applications). The fact that 
        		$\nabla H$ is not globally Lipschitz reduces the potential of applying the iPiano and PFB 
        		methods in concrete applications and therefore highly motivated us to study their block 
        		counterparts (PALM in \cite{BST2014} and iPALM in this paper).
        	\item[$\rm{(iii)}$] Another advantage of algorithms that exploit block structures inherent in 
        		the model at hand is the fact that they achieve better numerical performance (see Section 
        		\ref{Sec:Numerics}) by taking step-sizes which is optimized to each separated block of 
        		variables.
        	\item[$\rm{(iv)}$] Item (v) of Assumption \ref{AssumptionsA} holds true, for example, when $H$ 
        		is $C^{2}$. In this case the inequalities in \eqref{A:SupBound} could be obtained if the 
        		sequence, which generated by the algorithm, is bounded.
	\end{itemize}
\medskip

	The iPALM algorithm generalizes few known algorithms for different values of the inertial parameters 
	$\alpha_{i}^{k}$ and $\beta_{i}^{k}$, $k \in \nn$ and $i = 1  ,2$. For example, when $\alpha_{i}^{k} 
	= \beta_{i}^{k} = 0$, $k \in \nn$, we recover the PALM algorithm of \cite{BST2014} which is a block 
	version of the classical Proximal Forward-Backward (PFB) algorithm. When, there is only one block of 
	variables, for instance only $i = 1$, we get the iPiano algorithm \cite{OCBP2014} which is recovered 
	exactly only when $\beta_{1}^{k} = 0$, $k \in \nn$. It should be also noted that in \cite{OCBP2014}, 
	the authors additionally assume that the function $f_{1}$ is convex (an assumption that is not 
	needed in our case). The iPiano algorithm by itself generalizes two classical and known algorithms, 
	one is the Heavy-Ball method \cite{B1964} (when $f_{1} \equiv 0$) and again the PFB method (when 
	$\alpha_{1}^{k} = 0$, $k \in \nn$).
	
\section{Mathematical Preliminaries and Proof Methodology} \label{Sec:Preli}	
	Throughout this paper we are using standard notations and definitions of nonsmooth analysis which 
	can be found in any classical book, see for instance \cite{RW1998-B,M2006-B}. We recall here few 
	notations and technical results. Let $\s : \rr^{d} \rightarrow \left(-\infty , \infty\right]$ be a 
	proper and lower semicontinuous function. Since we are dealing with nonconvex and nonsmooth 
	functions that can have the value $\infty$, we use the notion of limiting subdifferential (or simply 
	subdifferential), see \cite{M2006-B}, which is denoted by $\partial \s$. In what follows, we are 
	interested in finding critical points of the objective function $F$ defined in \eqref{Model}. 
	Critical points are those points for which the corresponding subdifferential contains the zero 
	vector $\bo$. The set of critical points of $\s$ is denoted by $\crit{\s}$, that is,
	\begin{equation*}
		\crit{\s} = \left\{ u \in \dom{\s} : \, \bo \in \partial \s\left(u\right) \right\}.
	\end{equation*}
	An important property of the subdifferential is recorded in the following remark (see 
	\cite{RW1998-B}).
	\begin{remark} \label{R:SubdiffClosed}
		Let $\left\{ \left(u^{k} , q^{k}\right) \right\}_{k \in \nn}$ be a sequence in 
		$\gr{\left(\partial \s\right)}$ that converges to $\left(u , q\right)$ as $k \rightarrow \infty
		$. By the definition of $\partial \s\left(u\right)$, if $\s\left(u^{k}\right)$ converges to 
		$\s\left(u\right)$ as $k \rightarrow \infty$, then $\left(u , q\right) \in \gr{(\partial \s)}$.
	\end{remark}
	The convergence analysis of iPALM is based on the proof methodology which was developed in 
	\cite{ABS2013} and more recently extended and simplified in \cite{BST2014}. The main part of the 
	suggested methodology relies on the fact that the objective function of the problem at hand 
	satisfies the Kurdyka-{\L}ojasiewicz (KL) property. Before stating the KL property we will need the 
	definition of the following class of desingularizing functions. For $\eta \in \left(0 , \infty
	\right]$ define
	\begin{equation}
   		\Phi_{\eta} \equiv \left\{ \varphi \in C\left[\left[0 , \eta\right) , \rr_{+}\right] \mbox{ such 
   		that } \left.
   			\begin{cases}
			   	\varphi\left(0\right) = 0 & \\ 
			   	\varphi \in C^{1} & \mbox{ on } \left(0 ,\eta\right) \\
				\varphi'\left(s\right) > 0 & \mbox{ for all } s \in \left(0 , \eta\right)
			\end{cases} \right\} \right\}.
	\end{equation}
	The function $\s$ is said to have the Kurdyka-{\L}ojasiewicz (KL) property at $\overline{u} \in \dom 
	\partial \s$ if there exist $\eta \in \left(0 , \infty\right]$, a neighborhood $U$ of $\overline{u}$ 
	and a function $\varphi \in \Phi_{\eta}$, such that, for all 
	\begin{equation*}
		u \in U \cap [\s(\overline{u}) < \s(u) < \s(\overline{u}) + \eta],
	\end{equation*}
	the following inequality holds
	\begin{equation}\label{e:KL}
   		\varphi\left(\s\left(u\right) - \s\left(\overline{u}\right)\right)\dist\left(0 , \partial \s
   		\left(u\right)\right) \geq 1,
	\end{equation}
	where for any subset $S \subset \rr^{d}$ and any point $x \in \rr^{d}$
    \begin{equation*}
        \dist\left(x , S\right) := \inf \left\{ \norm{y - x} : \; y \in S \right\}.
    \end{equation*}
    When $S = \emptyset$, we have that $\dist\left(x , S\right) = \infty$ for all $x$. If $\s$ satisfies 
    property \eqref{e:KL} at each point of $\dom{\partial \s}$, then $\s$ is called a {\em KL function}.
\medskip

	The convergence analysis presented in the following section is based on the uniformized KL property 
	which was established in \cite[Lemma 6, p. 478]{BST2014}.
    \begin{lemma} \label{L:KLProperty}
        Let $\Omega$ be a compact set and let $\s : \rr^{d} \rightarrow \left(-\infty , \infty\right]$ 
        be a proper and lower semicontinuous function. Assume that $\s$ is constant on $\Omega$ and 
        satisfies the KL property at each point of $\Omega$. Then, there exist $\varepsilon > 0$, $\eta 
        > 0$ and $\varphi \in \Phi_{\eta}$ such that for all $\overline{u}$ in $\Omega$ and all $u$ in 
        the following intersection
        \begin{equation} \label{L:KLProperty:1}
            \left\{ u \in \rr^{d} : \; \dist\left(u , \Omega\right) < \varepsilon \right\} \cap \left[ 
            \s\left(\overline{u}\right) < \s\left(u\right) < \s\left(\overline{u}\right) + \eta\right],
        \end{equation}
        one has,
        \begin{equation} \label{L:KLProperty:2}
            \varphi'\left(\s\left(u\right) - \s\left(\overline{u}\right)\right)\dist\left(0 , \partial 
            \s\left(u\right)\right) \geq 1.
        \end{equation}
    \end{lemma}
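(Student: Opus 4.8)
The plan is to run a compactness argument that upgrades the pointwise KL property into a single uniform desingularizing function valid on a whole tube around $\Omega$. Write $\s^{\ast}$ for the constant value of $\s$ on $\Omega$. Because $\s \equiv \s^{\ast}$ on $\Omega$, the reference level $\s(\overline{u}) = \s^{\ast}$ is the same for every $\overline{u} \in \Omega$, so each KL inequality we produce will involve the \emph{same} strip $[\s^{\ast} < \s(u) < \s^{\ast} + \eta]$; this alignment is what makes a uniform statement possible. First I would apply the KL property at each $\overline{u} \in \Omega$ to obtain $\varepsilon_{\overline{u}} > 0$, $\eta_{\overline{u}} > 0$ and $\varphi_{\overline{u}} \in \Phi_{\eta_{\overline{u}}}$ for which $\varphi_{\overline{u}}'(\s(u) - \s^{\ast})\dist(0,\partial\s(u)) \geq 1$ holds on the open ball $B(\overline{u}, \varepsilon_{\overline{u}})$ intersected with $[\s^{\ast} < \s(u) < \s^{\ast} + \eta_{\overline{u}}]$.

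The balls $\{ B(\overline{u}, \varepsilon_{\overline{u}}) : \overline{u} \in \Omega \}$ form an open cover of the compact set $\Omega$, so I would extract a finite subcover $B_{i} := B(u_{i}, \varepsilon_{i})$, $i = 1, \dots, p$, with associated $\eta_{i}$ and $\varphi_{i}$. Setting $V := \bigcup_{i=1}^{p} B_{i}$, which is open and contains $\Omega$, I would fix the uniform radius $\varepsilon$ by a separation argument: if $V = \rr^{d}$ take $\varepsilon = 1$, otherwise put $\varepsilon := \dist(\Omega, \rr^{d}\setminus V)$, which is strictly positive since $\Omega$ is compact and $\rr^{d}\setminus V$ is a disjoint closed set. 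By construction any $u$ with $\dist(u,\Omega) < \varepsilon$ must lie in $V$, hence in at least one ball $B_{i}$; so every $u$ in the intersection \eqref{L:KLProperty:1} sits in some $B_{i}$.

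To manufacture a single desingularizing function I would set $\eta := \min_{1 \le i \le p} \eta_{i} > 0$ and $\varphi := \sum_{i=1}^{p} \varphi_{i}$, with each $\varphi_{i}$ restricted to $[0,\eta)$. One checks at once that $\varphi \in \Phi_{\eta}$: it is continuous on $[0,\eta)$, vanishes at $0$, is $C^{1}$ on $(0,\eta)$, and $\varphi' = \sum_{i} \varphi_{i}' > 0$ there. Now take any $u$ in the intersection \eqref{L:KLProperty:1}; it belongs to some $B_{i}$, and since $\s(u) < \s^{\ast} + \eta \le \s^{\ast} + \eta_{i}$ it lies in the strip associated with $u_{i}$, whence $\varphi_{i}'(\s(u)-\s^{\ast})\dist(0,\partial\s(u)) \ge 1$. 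Because every summand $\varphi_{j}'$ is positive, $\varphi'(\s(u)-\s^{\ast}) \ge \varphi_{i}'(\s(u)-\s^{\ast})$, and multiplying by $\dist(0,\partial\s(u)) \ge 0$ gives \eqref{L:KLProperty:2}. If $u \notin \dom{\partial \s}$ then $\dist(0,\partial\s(u)) = \infty$ and the inequality is trivial.

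The genuinely delicate points are two, and both are mild. The first is extracting the uniform tube radius $\varepsilon$: one must pass from ``each point near $\Omega$ lies in some ball'' to a single $\varepsilon$ working simultaneously, which is exactly what the compactness/separation step supplies. The second is ensuring the amalgamated $\varphi$ stays in $\Phi_{\eta}$ while still dominating each $\varphi_{i}'$; taking the sum rather than, say, the maximum (which would break $C^{1}$ smoothness at crossing points) resolves this cleanly, and the constancy of $\s$ on $\Omega$ is precisely what keeps all the strips tied to the common level $\s^{\ast}$ so that a single $\eta$ and a single $\varphi$ suffice.
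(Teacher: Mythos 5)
Your argument is correct, and it is essentially the proof the paper points to: the paper does not prove Lemma \ref{L:KLProperty} itself but imports it from \cite[Lemma 6]{BST2014}, whose proof is exactly your compactness argument --- finite subcover of $\Omega$, tube radius $\varepsilon$ by separating $\Omega$ from the complement of the union of balls, $\eta = \min_{i} \eta_{i}$, and the summed desingularizing function $\varphi = \sum_{i} \varphi_{i}$, whose derivative dominates each $\varphi_{i}'$. One minor remark: your pointwise KL inequality uses $\varphi'$, which is the standard (and intended) definition and matches the lemma's conclusion \eqref{L:KLProperty:2}, whereas the paper's displayed definition \eqref{e:KL} misprints $\varphi$ in place of $\varphi'$.
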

    We refer the reader to \cite{BDLM10} for a depth study of the class of KL functions. For the 
    important relation between semi-algebraic and KL functions see \cite{BDL2006}. In 
    \cite{AB2009,ABRS2010,ABS2013,BST2014}, the interested reader can find through catalog of functions 
    which are very common in many applications and satisfy the KL property.
\medskip

	Before concluding the mathematical preliminaries part we would like to mention few important 
	properties of the proximal map (defined in \eqref{D:ProximalMap}). The following result can be found 
	in \cite{RW1998-B}.
    \begin{proposition} \label{P:WellProximal}
        Let $\s : \rr^{d} \rightarrow \left(-\infty , \infty\right]$ be a proper and lower 
        semicontinuous function with $\inf_{\rr^{d}} \s > -\infty$. Then, for every $t \in \left(0 , 
        \infty\right)$ the set $\prox_{t\s}\left(u\right)$ is nonempty and compact.
    \end{proposition}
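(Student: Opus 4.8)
The plan is to apply a Weierstrass-type existence theorem to the auxiliary function $g : \rr^{d} \to \erl$ defined by $g\left(q\right) := \s\left(q\right) + \tfrac{t}{2}\norm{q - u}^{2}$, whose set of minimizers is by definition exactly $\prox_{t}^{\s}\left(u\right)$. First I would verify that $g$ is proper and lower semicontinuous. Since $\s$ is proper there is a point $q_{0}$ with $\s\left(q_{0}\right) < \infty$, hence $g\left(q_{0}\right) < \infty$; and since $\s > -\infty$ everywhere while the quadratic term is nonnegative, $g$ never takes the value $-\infty$. Thus $g$ is proper. Lower semicontinuity of $g$ is immediate because $\s$ is lower semicontinuous and $q \mapsto \tfrac{t}{2}\norm{q - u}^{2}$ is continuous, so their sum is lower semicontinuous.

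The key step is coercivity. Using the hypothesis $m := \inf_{\rr^{d}} \s > -\infty$, I would bound $g\left(q\right) \geq m + \tfrac{t}{2}\norm{q - u}^{2}$, whence $g\left(q\right) \to +\infty$ as $\norm{q} \to \infty$. This is precisely the point where the quadratic penalty rescues coercivity even though $\s$ itself need not be coercive: it need only be bounded below, and the quadratic term does all the coercive work. Coercivity together with lower semicontinuity implies that every sublevel set $\left\{ q : g\left(q\right) \leq \alpha \right\}$ is closed (by lower semicontinuity) and bounded (by coercivity), hence compact.

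With these properties in hand, existence of a minimizer follows from the standard argument: take a minimizing sequence, confine it to a fixed compact sublevel set, extract a convergent subsequence, and invoke lower semicontinuity to conclude that its limit attains the infimum; in particular $\prox_{t}^{\s}\left(u\right)$ is nonempty. For compactness of the argmin set, I would observe that since the infimum $\mu := \min g$ is now attained, the set of minimizers coincides with the sublevel set $\left\{ q : g\left(q\right) \leq \mu \right\}$, which is compact by the argument above. I do not anticipate any genuine obstacle here; the only point requiring care is to make explicit that boundedness below of $\s$ (rather than coercivity of $\s$) already suffices, with the quadratic term supplying the needed growth at infinity.
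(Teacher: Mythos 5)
Your proof is correct. The paper itself gives no argument for this proposition---it simply cites Rockafellar--Wets \cite{RW1998-B}---and your direct-method argument (properness and lower semicontinuity of $g\left(q\right) = \s\left(q\right) + \tfrac{t}{2}\norm{q - u}^{2}$, coercivity supplied by the quadratic term since $\s$ is merely bounded below, compactness of sublevel sets, and identification of the argmin set with the sublevel set at the attained minimum value) is precisely the standard proof underlying the cited result, so there is nothing to add or compare.
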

    It follows immediately from the definition that $\prox{\s}$ is a multi-valued map when $\s$ is 
    nonconvex. The multi-valued projection onto a nonempty and closed set $C$ is recovered when $\s = 
    \delta_{C}$, which is the indicator function of $C$ that defined to be zero on $C$ and $\infty$ 
    outside.
\medskip

	The main computational effort of iPALM involves a proximal mapping step of a proper and lower 
	semicontinuous but nonconvex function. The following property will be essential in the forthcoming 
	convergence analysis and is a slight modification of \cite[Lemma 2, p. 471]{BST2014}.
	\begin{lemma}[Proximal inequality] \label{L:ProxIne}
        Let $h : \rr^{d} \rightarrow \rr$ be a continuously differentiable function with gradient 
        $\nabla h$ assumed $L_{h}$-Lipschitz continuous and let $\s : \rr^{d} \rightarrow \left(-\infty 
        , \infty\right]$ be a proper and lower semicontinuous function with $\inf_{\rr^{d}} \s > - 
        \infty$. Then, for any $v , w \in \dom{\s}$ and any $u^{+} \in \rr^{d}$ defined by
        \begin{equation} \label{L:ProxIne:1}
            u^{+} \in \prox_{t}^{\s}\left(v - \frac{1}{t}\nabla h\left(w\right)\right), \quad t > 0,
        \end{equation}
        we have, for any $u \in \dom{\s}$ and any $s > 0$:
       	\begin{equation} \label{L:ProxIne:2}
            g\left(u^{+}\right) \leq g\left(u\right) + \frac{L_{h} + s}{2}\norm{u^{+} - u}^{2} + 
            \frac{t}{2}\norm{u - v}^{2} - \frac{t}{2}\norm{u^{+} - v}^{2} + \frac{L_{h}^{2}}{2s}\norm{u 
            - w}^{2},
        \end{equation}
        where $g := h + \s$.
    \end{lemma}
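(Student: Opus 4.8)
The plan is to exploit the minimality property that defines $u^{+}$, splice in the smooth part $h$ through its descent lemma, and use a Young-type split to absorb the mismatch between the gradient evaluation point $w$ and the comparison point $u$. First I would record the variational inequality coming from the proximal map. Since $u^{+} \in \prox_{t}^{\s}(p)$ with $p := v - \tfrac{1}{t}\nabla h(w)$, and the defining problem in \eqref{L:ProxIne:1} is a global minimization, for every $u \in \dom{\s}$ one has
\[
  \s(u^{+}) + \frac{t}{2}\norm{u^{+} - p}^{2} \leq \s(u) + \frac{t}{2}\norm{u - p}^{2}.
\]
Expanding both squared norms, the common term $\tfrac{1}{2t}\norm{\nabla h(w)}^{2}$ cancels, and after rearranging I obtain
\[
  \s(u^{+}) \leq \s(u) + \frac{t}{2}\norm{u - v}^{2} - \frac{t}{2}\norm{u^{+} - v}^{2} + \langle \nabla h(w), u - u^{+}\rangle.
\]

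Next I would bring in the smooth part. Because $\nabla h$ is $L_{h}$-Lipschitz, the descent lemma applied with base point $u$ gives
\[
  h(u^{+}) \leq h(u) + \langle \nabla h(u), u^{+} - u\rangle + \frac{L_{h}}{2}\norm{u^{+} - u}^{2}.
\]
Adding this to the previous inequality and recalling $g = h + \s$, the two inner-product terms combine into the single expression $\langle \nabla h(u) - \nabla h(w), u^{+} - u\rangle$.

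The key step, and the only place a parameter enters, is to control this cross term. By Cauchy--Schwarz together with the Lipschitz bound $\norm{\nabla h(u) - \nabla h(w)} \leq L_{h}\norm{u - w}$, and then Young's inequality $ab \leq \tfrac{a^{2}}{2s} + \tfrac{s}{2}b^{2}$ with $a = L_{h}\norm{u - w}$ and $b = \norm{u^{+} - u}$, I get
\[
  \langle \nabla h(u) - \nabla h(w), u^{+} - u\rangle \leq \frac{L_{h}^{2}}{2s}\norm{u - w}^{2} + \frac{s}{2}\norm{u^{+} - u}^{2}.
\]
Substituting this, merging the two coefficients of $\norm{u^{+} - u}^{2}$ into $\tfrac{L_{h}+s}{2}$, and regrouping the $h$- and $\s$-terms back into $g(u)$ yields exactly \eqref{L:ProxIne:2}.

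I do not expect a genuine obstacle; the computation is elementary once the right base point is chosen. The single decision that shapes the conclusion is \emph{centering the descent lemma at the comparison point $u$ rather than at $w$}: this produces the difference $\nabla h(u) - \nabla h(w)$ and lets the Lipschitz modulus convert the gradient mismatch into the term $\tfrac{L_{h}^{2}}{2s}\norm{u - w}^{2}$. That term is precisely the new ingredient, absent in the original PALM estimate, that accommodates a gradient evaluation point $w \neq v$ as demanded by the inertial scheme.
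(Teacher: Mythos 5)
Your proposal is correct and follows essentially the same route as the paper's proof: the proximal optimality comparison at $\xi = u$, the descent lemma centered at $u$, and the Young/Lipschitz split of the cross term $\act{\nabla h\left(u\right) - \nabla h\left(w\right) , u^{+} - u}$ are exactly the steps used there. The only cosmetic difference is that you expand the squared norms $\norm{\cdot - p}^{2}$ explicitly and cancel $\tfrac{1}{2t}\norm{\nabla h\left(w\right)}^{2}$, whereas the paper writes the proximal objective in the already-completed-square form before comparing.
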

    \begin{proof}
        First, it follows immediately from Proposition \ref{P:WellProximal} that $u^{+}$ is  
        well-defined. By the definition of the proximal mapping (see \eqref{D:ProximalMap}) we get that
        \begin{equation*}
            u^{+} \in \argmin_{\xi  \in \rr^{d}} \left\{ \act{\xi - v , \nabla h\left(w\right)} + 
            \frac{t}{2}\norm{\xi - v}^{2} + \s\left(\xi\right) \right\},
        \end{equation*}
        and hence in particular, by taking $\xi = u$, we obtain
        \begin{equation*}
            \act{u^{+} - v , \nabla h\left(w\right)} + \frac{t}{2}\norm{u^{+} - v}^{2} + \s\left(u^{+}
            \right) \leq \act{u - v , \nabla h\left(w\right)} + \frac{t}{2}\norm{u - v}^{2} + \s\left(u
            \right).
        \end{equation*}
        Thus
        \begin{equation} \label{L:ProxIne:3}
            \s\left(u^{+}\right) \leq \act{u - u^{+} , \nabla h\left(w\right)} + \frac{t}{2}\norm{u - v}
            ^{2} - \frac{t}{2}\norm{u^{+} - v}^{2} + \s\left(u\right).
        \end{equation}
        Invoking first the descent lemma (see \cite{BT89}) for $h$, and using \eqref{L:ProxIne:3}, 
        yields
        \begin{align*}
            h\left(u^{+}\right) + \s\left(u^{+}\right) & \leq h\left(u\right) + \act{u^{+} - u , \nabla 
            h\left(u\right)} + \frac{L_{h}}{2}\norm{u^{+} - u}^{2} + \act{u - u^{+} , \nabla h\left(w
            \right)} \\
            & + \frac{t}{2}\norm{u - v}^{2} - \frac{t}{2}\norm{u^{+} - v}^{2} + \s\left(u\right) \\
            & = h\left(u\right) + \s\left(u\right) + \act{u^{+} - u , \nabla h\left(u\right) - \nabla 
            h\left(w\right)} + \frac{L_{h}}{2}\norm{u^{+} - u}^{2} \\
            & + \frac{t}{2}\norm{u - v}^{2} - \frac{t}{2}\norm{u^{+} - v}^{2}.
        \end{align*}
        Now, using the fact that $\act{p , q} \leq \left(s/2\right)\norm{p}^{2} + \left(1/2s\right)
        \norm{q}^{2}$ for any two vectors $p , q \in \rr^{d}$ and every $s > 0$, yields
        \begin{align*}
            \act{u^{+} - u , \nabla h\left(u\right) - \nabla h\left(w\right)} & \leq \frac{s}{2}
            \norm{u^{+} - u}^{2} + \frac{1}{2s}\norm{\nabla h\left(u\right) - \nabla h\left(w\right)}
            ^{2} \\
            & \leq \frac{s}{2}\norm{u^{+} - u}^{2} + \frac{L_{h}{^2}}{2s}\norm{u - w}^{2},
        \end{align*}
        where we have used the fact that $\nabla h$ is $L_{h}$-Lipschitz continuous. Thus, combining the 
        last two inequalities proves that \eqref{L:ProxIne:2} holds.
    \end{proof}
	\begin{remark}
		It should be noted that if the nonsmooth function $\s$ is also known to be convex, then we can 
		derive the following tighter upper bound (\cf \eqref{L:ProxIne:2})
      	\begin{equation} \label{R:ProxIneConvex}
            \hspace{-0.05in} g\left(u^{+}\right) \leq g\left(u\right) + \frac{L_{h} + s - t}{2}
            \norm{u^{+} - u}^{2} + \frac{t}{2}\norm{u - v}^{2} - \frac{t}{2}\norm{u^{+} - v}^{2} + 
            \frac{L_{h}^{2}}{2s}\norm{u - w}^{2}.
        	\end{equation}
	\end{remark}
	
\subsection{Convergence Proof Methodology}
	In this section we briefly summarize (\cf Theorem \ref{T:GeneralConvergence} below) the methodology 
	recently proposed in \cite{BST2014} which provides the key elements to obtain an abstract 
	convergence result that can be applied to any algorithm and will be applied here to prove 
	convergence of iPALM. Let $\Seq{\bu}{k}$ be a sequence in $\rr^{d}$ which was generated from a
	starting point $\bu^{0}$ by a generic algorithm ${\cal A}$. The set of all limit points of 
	$\Seq{\bu}{k}$ is denoted by $\omega\left(\bu^{0}\right)$, and defined by
    \begin{equation*}
    		\left\{ \overline{\bu} \in \rr^{d}: \; \exists \mbox{ an increasing sequence of integers } 
    		\seq{k}{l} \mbox{ such that }\; \bu^{k_{l}} \rightarrow \overline{\bu} \mbox{ as } l \rightarrow 
    		\infty \right\}.
    \end{equation*}	
	\begin{theorem} \label{T:GeneralConvergence}
		Let $\Psi : \rr^{d} \rightarrow \left(-\infty , \infty\right]$ be a proper, lower semicontinuous 
		and semi-algebraic function with $\inf \Psi > -\infty$. Assume that $\Seq{\bu}{k}$ is a bounded 
		sequence generated by a generic algorithm ${\cal A}$ from a starting point $\bu^{0}$, for which 
		the following three conditions hold true for any $k \in \nn$.
		\begin{itemize}
        		\item[$\rm{(C1)}$] There exists a positive scalar $\rho_{1}$ such that
            		\begin{equation*}
                		\rho_{1}\norm{\bu^{k + 1} - \bu^{k}}^{2} \leq \Psi\left(\bu^{k}\right) - \Psi
                		\left(\bu^{k + 1}\right), \quad \forall \,\, k = 0 , 1 , \ldots.
            		\end{equation*}
        		\item[$\rm{(C2)}$] There exists a positive scalar $\rho_{2}$ such that for some $\bw^{k} \in 
        			\partial \Psi\left(\bu^{k}\right)$ we have
            		\begin{equation*}
            		    \norm{\bw^{k}} \leq \rho_{2}\norm{\bu^{k} - \bu^{k - 1}}, \quad \forall \,\, k = 0 , 
            		    1 , \ldots.
            		\end{equation*}
    			\item[$\rm{(C3)}$] Each limit point in the set $\omega\left(\bu^{0}\right)$ is a critical 
    				point of $\Psi$, that is, $\omega\left(\bu^{0}\right) \subset \crit{\Psi}$.
        \end{itemize}	
		Then, the sequence $\Seq{\bu}{k}$ converges to a critical point $\bu^{\ast}$ of $\Psi$.    	
	\end{theorem}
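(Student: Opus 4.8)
The plan is to follow the by-now classical Kurdyka--\L ojasiewicz recipe, using (C1) as a sufficient-decrease estimate, (C2) as a relative-error estimate, (C3) to identify the limit, and the semi-algebraicity of $\Psi$ (which guarantees the KL property and hence lets us invoke Lemma~\ref{L:KLProperty}). First I would extract the elementary consequences of (C1). Since $\rho_{1} > 0$, the sequence of values $\left\{ \Psi\left(\bu^{k}\right) \right\}_{k \in \nn}$ is nonincreasing, and being bounded below by $\inf \Psi > -\infty$ it converges to some finite $\Psi^{\ast}$. Summing (C1) over $k = 0 , \ldots , N$ telescopes to
\[
	\rho_{1} \sum_{k=0}^{N} \norm{\bu^{k+1}-\bu^{k}}^{2} \leq \Psi\left(\bu^{0}\right) - \Psi\left(\bu^{N+1}\right) \leq \Psi\left(\bu^{0}\right) - \Psi^{\ast},
\]
so $\sum_{k} \norm{\bu^{k+1}-\bu^{k}}^{2} < \infty$ and in particular $\norm{\bu^{k+1}-\bu^{k}} \to 0$. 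If $\Psi\left(\bu^{k_{0}}\right) = \Psi^{\ast}$ for some $k_{0}$, then (C1) forces $\bu^{k+1} = \bu^{k}$ for all $k \geq k_{0}$ and the conclusion is immediate; hence from now on I assume $\Psi\left(\bu^{k}\right) > \Psi^{\ast}$ for all $k$.

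Next I would analyze the limit set $\omega\left(\bu^{0}\right)$. Boundedness of $\Seq{\bu}{k}$ makes it nonempty and compact, and the standard argument (any subsequence staying at distance $\geq \varepsilon$ from $\omega\left(\bu^{0}\right)$ would itself have a convergent sub-subsequence whose limit lies in $\omega\left(\bu^{0}\right)$), combined with $\norm{\bu^{k+1}-\bu^{k}} \to 0$, yields $\dist\left(\bu^{k},\omega\left(\bu^{0}\right)\right) \to 0$. The crucial preliminary fact is that $\Psi$ is finite and constant, equal to $\Psi^{\ast}$, on $\omega\left(\bu^{0}\right)$: for any $\overline{\bu} \in \omega\left(\bu^{0}\right)$ with $\bu^{k_{l}} \to \overline{\bu}$, lower semicontinuity gives $\Psi\left(\overline{\bu}\right) \leq \liminf_{l} \Psi\left(\bu^{k_{l}}\right) = \Psi^{\ast}$, while the reverse inequality requires showing $\Psi\left(\bu^{k_{l}}\right) \to \Psi\left(\overline{\bu}\right)$. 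This value-continuity along convergent subsequences is precisely where lower semicontinuity alone does not suffice; I would obtain it from the way the sequence is generated, using the relative-error bound (C2) --- which gives $\bw^{k_{l}} \to \bo$ with $\bw^{k_{l}} \in \partial \Psi\left(\bu^{k_{l}}\right)$ --- together with the closedness property of Remark~\ref{R:SubdiffClosed}. I expect this step to be the main obstacle, since establishing $\limsup_{l} \Psi\left(\bu^{k_{l}}\right) \leq \Psi\left(\overline{\bu}\right)$ is the only part that cannot be read off from (C1)--(C3) by purely soft arguments.

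With constancy in hand, I would apply the uniformized KL property (Lemma~\ref{L:KLProperty}) to $\Omega = \omega\left(\bu^{0}\right)$, obtaining $\varepsilon , \eta > 0$ and $\varphi \in \Phi_{\eta}$ such that for all large $k$ (where $\dist\left(\bu^{k},\omega\left(\bu^{0}\right)\right) < \varepsilon$ and $\Psi^{\ast} < \Psi\left(\bu^{k}\right) < \Psi^{\ast}+\eta$) one has $\varphi'\left(\Psi\left(\bu^{k}\right)-\Psi^{\ast}\right)\dist\left(0 , \partial \Psi\left(\bu^{k}\right)\right) \geq 1$. Writing $r_{k} := \Psi\left(\bu^{k}\right) - \Psi^{\ast}$ and using (C2) to bound $\dist\left(0,\partial \Psi\left(\bu^{k}\right)\right) \leq \norm{\bw^{k}} \leq \rho_{2}\norm{\bu^{k}-\bu^{k-1}}$, the concavity of $\varphi$ gives
\[
	\varphi\left(r_{k}\right) - \varphi\left(r_{k+1}\right) \geq \varphi'\left(r_{k}\right)\left(r_{k}-r_{k+1}\right) \geq \frac{r_{k}-r_{k+1}}{\rho_{2}\norm{\bu^{k}-\bu^{k-1}}},
\]
and combining with (C1) in the form $r_{k}-r_{k+1} \geq \rho_{1}\norm{\bu^{k+1}-\bu^{k}}^{2}$ yields $\norm{\bu^{k+1}-\bu^{k}}^{2} \leq (\rho_{2}/\rho_{1})\left(\varphi\left(r_{k}\right)-\varphi\left(r_{k+1}\right)\right)\norm{\bu^{k}-\bu^{k-1}}$. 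Applying $\sqrt{ab} \leq \tfrac{1}{2}(a+b)$ then gives
\[
	\norm{\bu^{k+1}-\bu^{k}} \leq \tfrac{1}{2}\norm{\bu^{k}-\bu^{k-1}} + \tfrac{\rho_{2}}{2\rho_{1}}\bigl(\varphi\left(r_{k}\right)-\varphi\left(r_{k+1}\right)\bigr),
\]
and summing over $k$ from a large index onward (the $\varphi$-terms telescope, the half-steps are absorbed into the left-hand side by an index shift) shows $\sum_{k} \norm{\bu^{k+1}-\bu^{k}} < \infty$. A finite-length sequence is Cauchy, so $\bu^{k} \to \bu^{\ast}$ for some $\bu^{\ast}$; since $\bu^{\ast} \in \omega\left(\bu^{0}\right)$, condition (C3) gives $\bu^{\ast} \in \crit{\Psi}$, completing the proof.
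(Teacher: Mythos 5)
Your overall strategy is the standard Attouch--Bolte--Svaiter/Bolte--Sabach--Teboulle recipe, and this is indeed the right frame of reference: the paper itself does not prove Theorem~\ref{T:GeneralConvergence} at all, but quotes it as a summary of the methodology of \cite{BST2014}. Your treatment of (C1) (monotonicity, square-summability, the degenerate case $\Psi(\bu^{k_{0}})=\Psi^{\ast}$), the uniformized KL step via Lemma~\ref{L:KLProperty}, the concavity/telescoping estimate, and the finite-length conclusion are all correct as set up. One small caveat: your key inequality $\varphi(r_{k})-\varphi(r_{k+1}) \geq \varphi'(r_{k})(r_{k}-r_{k+1})$ uses concavity of $\varphi$, which the paper's definition of $\Phi_{\eta}$ does not actually require (only $\varphi(0)=0$, continuity, $C^{1}$, $\varphi'>0$); the desingularizing functions of \cite{BST2014} are concave, so this is an omission in the paper's definition, but you should flag concavity as an assumption.

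The genuine gap is exactly at the point you yourself call ``the main obstacle'', and the fix you sketch does not work. To invoke Lemma~\ref{L:KLProperty} with $\Omega=\omega(\bu^{0})$ you must prove that $\Psi$ is finite and constant on $\omega(\bu^{0})$, equivalently that $\Psi(\bu^{k_{l}})\to\Psi(\overline{\bu})=\Psi^{\ast}$ along convergent subsequences. You propose to get this from (C2) together with the closedness property of Remark~\ref{R:SubdiffClosed}; that is circular, because the closedness of $\gr{(\partial\Psi)}$ stated there takes the value convergence $\Psi(\bu^{k_{l}})\to\Psi(\overline{\bu})$ as a \emph{hypothesis} and cannot produce it. Lower semicontinuity only gives $\Psi(\overline{\bu})\leq\Psi^{\ast}$, and nothing in (C1)--(C3) excludes a downward jump $\Psi(\overline{\bu})<\Psi^{\ast}$ at the limit set; moreover, since the theorem concerns a \emph{generic} algorithm, ``the way the sequence is generated'' carries no information beyond (C1)--(C3). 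This is precisely why the abstract convergence results of \cite{ABS2013,BST2014} include an additional hypothesis --- the so-called continuity condition, i.e.\ existence of a subsequence along which both the iterates and the function values converge to a limit point and its value --- and why the present paper, when it actually uses the theorem, supplies that ingredient by a problem-specific argument: in the proof of Proposition~\ref{P:ConC3}, the optimality of the prox step (testing the minimization defining $x_{1}^{k+1}$ against $x_{1}^{\ast}$) is what yields $\limsup_{l} f_{1}(x_{1}^{k_{l}})\leq f_{1}(x_{1}^{\ast})$ and hence $\Psi(\bu^{k_{l}})\to\Psi(\bu^{\ast})$. To complete your proof you must either add the continuity condition to the hypotheses (as \cite{BST2014} does) or import such an algorithm-specific argument; it cannot be derived from (C1)--(C3) by soft arguments.
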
	    
	
\section{Convergence Analysis of iPALM} \label{Sec:Convergence}
	Our aim in this section is to prove that the sequence $\left\{ \left(x_{1}^{k} , x_{2}^{k}\right) 
	\right\}_{k \in \nn}$ which is generated by iPALM converges to a critical point of the objective 
	function $F$ defined in \eqref{Model}. To this end we will follow the proof methodology described 
	above in Theorem \ref{T:GeneralConvergence}. In the case of iPALM, similarly to the iPiano algorithm 
	(see \cite{OCBP2014}), it is not possible to prove that condition (C1) hold true for the sequence 
	$\left\{ \left(x_{1}^{k} , x_{2}^{k}\right) \right\}_{k \in \nn}$ and the function $F$, namely, this 
	is not a descent algorithm with respect to $F$. Therefore, we first show that conditions (C1), (C2) 
	and (C3) hold true for an auxiliary sequence and auxiliary function (see details below). Then, based 
	on these properties we will show that the original sequence converges to a critical point of the 
	original function $F$.
\medskip

	We first introduce the following notations that simplify the coming expositions. For any $k \in \nn
	$, we define
	\begin{equation} \label{D:Delta}
		\Delta_{1}^{k} = \frac{1}{2}\norm{x_{1}^{k} - x_{1}^{k - 1}}^{2}, \quad \Delta_{2}^{k} = 
		\frac{1}{2}\norm{x_{2}^{k} - x_{2}^{k - 1}}^{2} \quad \text{and} \quad \Delta^{k} = \frac{1}{2}
		\norm{\bx^{k} - \bx^{k - 1}}^{2},
	\end{equation}
	it is clear, that using these notations, we have that $\Delta^{k} = \Delta_{1}^{k} + \Delta_{2}^{k}$ 
	for all $k \in \nn$. Using these notations we can easily show few basic relations of the sequences 
	$\left\{ x_{i}^{k} \right\}_{k \in \nn}$, $\left\{ y_{i}^{k} \right\}_{k \in \nn}$, and $\left\{ 
	z_{i}^{k} \right\}_{k \in \nn}$, for $i = 1 , 2$, generated by iPALM.
	\begin{proposition} \label{P:BasicFacts}
		Let $\left\{ \left(x_{1}^{k} , x_{2}^{k}\right) \right\}_{k \in \nn}$ be a sequence generated by 
		iPALM. Then, for any $k \in \nn$ and $i = 1 , 2$, we have
		\begin{itemize}
        		\item[$\rm{(i)}$] $\norm{x_{i}^{k} - y_{i}^{k}}^{2} = 2\left(\alpha_{i}^{k}\right)^{2}
        			\Delta_{i}^{k}$;
        		\item[$\rm{(ii)}$] $\norm{x_{i}^{k} - z_{i}^{k}}^{2} = 2\left(\beta_{i}^{k}\right)^{2}
        			\Delta_{i}^{k}$;
    			\item[$\rm{(iii)}$] $\norm{x_{i}^{k + 1} - y_{i}^{k}}^{2} \geq 2\left(1 - \alpha_{i}^{k}
    				\right)\Delta_{i}^{k + 1} + 2\alpha_{i}^{k}\left(\alpha_{i}^{k} - 1\right)\Delta_{i}^{k}
    				$.
        \end{itemize}	
	\end{proposition}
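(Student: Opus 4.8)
The plan is to establish all three relations by direct computation from the defining recursions \eqref{iPALM:StepX1:1}--\eqref{iPALM:StepX1:3} (and their analogues \eqref{iPALM:StepX2:1}--\eqref{iPALM:StepX2:3} for $i=2$), together with the notation introduced in \eqref{D:Delta}. Parts (i) and (ii) are immediate algebraic identities, while part (iii) is the only step requiring an elementary inequality. Since the three statements are symmetric in $i=1,2$, I would carry out the computation for a generic index $i$.

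For part (i), I would subtract the defining relation for $y_{i}^{k}$ from $x_{i}^{k}$ to obtain $x_{i}^{k} - y_{i}^{k} = -\alpha_{i}^{k}\left(x_{i}^{k} - x_{i}^{k-1}\right)$, whence $\norm{x_{i}^{k} - y_{i}^{k}}^{2} = \left(\alpha_{i}^{k}\right)^{2}\norm{x_{i}^{k} - x_{i}^{k-1}}^{2} = 2\left(\alpha_{i}^{k}\right)^{2}\Delta_{i}^{k}$, using the definition of $\Delta_{i}^{k}$. Part (ii) follows in exactly the same manner, replacing $\alpha_{i}^{k}$ and $y_{i}^{k}$ by $\beta_{i}^{k}$ and $z_{i}^{k}$ in the relation for $z_{i}^{k}$.

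For part (iii), which is the substantive step, I would write $x_{i}^{k+1} - y_{i}^{k} = \left(x_{i}^{k+1} - x_{i}^{k}\right) - \alpha_{i}^{k}\left(x_{i}^{k} - x_{i}^{k-1}\right)$ and expand the squared norm:
\[
\norm{x_{i}^{k+1} - y_{i}^{k}}^{2} = \norm{x_{i}^{k+1} - x_{i}^{k}}^{2} - 2\alpha_{i}^{k}\act{x_{i}^{k+1} - x_{i}^{k} , x_{i}^{k} - x_{i}^{k-1}} + \left(\alpha_{i}^{k}\right)^{2}\norm{x_{i}^{k} - x_{i}^{k-1}}^{2}.
\]
By \eqref{D:Delta}, the first and third terms equal $2\Delta_{i}^{k+1}$ and $2\left(\alpha_{i}^{k}\right)^{2}\Delta_{i}^{k}$ respectively. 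To bound the cross term from below I would invoke the elementary estimate $2\act{p,q} \leq \norm{p}^{2} + \norm{q}^{2}$, giving $-2\alpha_{i}^{k}\act{x_{i}^{k+1} - x_{i}^{k} , x_{i}^{k} - x_{i}^{k-1}} \geq -2\alpha_{i}^{k}\Delta_{i}^{k+1} - 2\alpha_{i}^{k}\Delta_{i}^{k}$. Collecting the terms then yields precisely $2\left(1 - \alpha_{i}^{k}\right)\Delta_{i}^{k+1} + 2\alpha_{i}^{k}\left(\alpha_{i}^{k} - 1\right)\Delta_{i}^{k}$, as claimed.

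There is no genuine obstacle here, as the proposition is a purely algebraic preliminary whose role is to rewrite inertial quantities in terms of the successive increments $\Delta_{i}^{k}$. The only point demanding a moment's care is the \emph{direction} of the inequality in (iii): because we seek a lower bound and the inertial parameter $\alpha_{i}^{k}$ is nonnegative (it lies in $\left[0,1\right]$ by the algorithm), multiplying $-2\act{p,q} \geq -\norm{p}^{2} - \norm{q}^{2}$ by $\alpha_{i}^{k}$ preserves the sense of the inequality and produces the stated bound rather than its reverse.
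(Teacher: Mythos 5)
Your proposal is correct and follows essentially the same route as the paper: parts (i) and (ii) by direct substitution of the defining relations for $y_{i}^{k}$ and $z_{i}^{k}$, and part (iii) by expanding the squared norm and bounding the cross term via $2\act{p , q} \leq \norm{p}^{2} + \norm{q}^{2}$, exactly as in the paper's argument (where nonnegativity of $\alpha_{i}^{k}$ is likewise used, implicitly, to preserve the inequality direction).
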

	\begin{proof}
		The first two items follow immediately from the facts that $x_{i}^{k} - y_{i}^{k} = \alpha_{i}
		^{k}\left(x_{i}^{k - 1} - x_{i}^{k}\right)$ and $x_{i}^{k} - z_{i}^{k} = \beta_{i}^{k}
		\left(x_{i}^{k - 1} - x_{i}^{k}\right)$, for $i = 1 , 2$ (see steps \eqref{iPALM:StepX1:1}, 
		\eqref{iPALM:StepX1:2}, \eqref{iPALM:StepX2:1} and \eqref{iPALM:StepX2:2}). The last item 
		follows from the following argument
		\begin{align}
	       \norm{x_{i}^{k + 1} - y_{i}^{k}}^{2} & = \norm{x_{i}^{k + 1} - x_{i}^{k} - \alpha_{i}^{k}
	       \left(x_{i}^{k} - x_{i}^{k - 1}\right)}^{2} \nonumber \\
	       & = 2\Delta_{i}^{k + 1} - 2\alpha_{i}^{k}\act{x_{i}^{k + 1} - x_{i}^{k} , x_{i}^{k} - x_{i}
	       ^{k - 1}} + 2\left(\alpha_{i}^{k}\right)^{2}\Delta_{i}^{k} \nonumber \\
	       & \geq 2\left(1 - \alpha_{i}^{k}\right)\Delta_{i}^{k + 1} + 2\alpha_{i}^{k}\left(\alpha_{i}
	       ^{k} - 1\right)\Delta_{i}^{k}, \label{L:Dec:6}
        \end{align}
		where we have used the fact that
		\begin{equation*}
			2\act{x_{i}^{k + 1} - x_{i}^{k} , x_{i}^{k} - x_{i}^{k - 1}} \leq \norm{x_{i}^{k + 1} - 
			x_{i}^{k}}^{2} + \norm{x_{i}^{k} - x_{i}^{k - 1}}^{2} = 2\Delta_{i}^{k + 1} + 2\Delta_{i}
			^{k},
		\end{equation*}
		that follows from the Cauchy-Schwartz and Young inequalities. This proves item (iii).
	\end{proof}
	Now we prove the following property of the sequence $\left\{ \left(x_{1}^{k} , x_{2}^{k}\right) 
	\right\}_{k \in \nn}$ generated by iPALM.
	\begin{proposition} \label{TechC1}
		Suppose that Assumption \ref{AssumptionsA} holds. Let $\left\{ \left(x_{1}^{k} , x_{2}^{k}
		\right) \right\}_{k \in \nn}$ be a sequence generated by iPALM, then for all $k \in \nn$, we 
		have that
		\begin{align*}
        		F\left(\bx^{k + 1}\right) & \leq F\left(\bx^{k}\right) + \frac{1}{s_{1}^{k}}\left(L_{1}
        		(x_{2}^{k})^{2}\left(\beta_{1}^{k}\right)^{2} + s_{1}^{k}\tau_{1}^{k}\alpha_{1}^{k}\right)
        		\Delta_{1}^{k} + \frac{1}{s_{2}^{k}}\left(L_{2}(x_{1}^{k + 1})^{2}\left(\beta_{2}^{k}
        		\right)^{2} + s_{2}^{k}\tau_{2}^{k}\alpha_{2}^{k}\right)\Delta_{2}^{k} \\
        		& + \left(L_{1}(x_{2}^{k}) + s_{1}^{k} - \tau_{1}^{k}\left(1 - \alpha_{1}^{k}\right)\right)
        		\Delta_{1}^{k + 1} + \left(L_{2}(x_{1}^{k + 1}) + s_{2}^{k} - \tau_{2}^{k}\left(1 - 
        		\alpha_{2}^{k}\right)\right)\Delta_{2}^{k + 1},
        \end{align*}	
        where $s_{1}^{k} > 0$ and $s_{2}^{k} > 0$ are arbitrarily chosen, for all $k \in \nn$.
	\end{proposition}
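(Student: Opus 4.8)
The plan is to apply the proximal inequality of Lemma~\ref{L:ProxIne} separately to each of the two prox-steps of iPALM, and then to chain the two resulting estimates, simplifying all quadratic terms through the identities of Proposition~\ref{P:BasicFacts}.

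For the first block I would invoke Lemma~\ref{L:ProxIne} with the smooth function $h\left(\cdot\right) := H\left(\cdot , x_{2}^{k}\right)$, so that $L_{h} = L_{1}\left(x_{2}^{k}\right)$ by Assumption~\ref{AssumptionsA}(iii); the nonsmooth part is $\s := f_{1}$, the step-size $t := \tau_{1}^{k}$, the extrapolation points $v := y_{1}^{k}$ and $w := z_{1}^{k}$, and the free parameter $s := s_{1}^{k}$. In view of the update \eqref{iPALM:StepX1:3}, the point $u^{+} := x_{1}^{k + 1}$ indeed lies in $\prox_{\tau_{1}^{k}}^{f_{1}}(y_{1}^{k} - \tfrac{1}{\tau_{1}^{k}}\nabla_{x_{1}} H(z_{1}^{k} , x_{2}^{k}))$, exactly the form required in \eqref{L:ProxIne:1}. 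Choosing the comparison point $u := x_{1}^{k}$ in \eqref{L:ProxIne:2} gives
\begin{align*}
	H\left(x_{1}^{k + 1} , x_{2}^{k}\right) + f_{1}\left(x_{1}^{k + 1}\right) & \leq H\left(x_{1}^{k} , x_{2}^{k}\right) + f_{1}\left(x_{1}^{k}\right) + \frac{L_{1}\left(x_{2}^{k}\right) + s_{1}^{k}}{2}\norm{x_{1}^{k + 1} - x_{1}^{k}}^{2} \\
	& + \frac{\tau_{1}^{k}}{2}\norm{x_{1}^{k} - y_{1}^{k}}^{2} - \frac{\tau_{1}^{k}}{2}\norm{x_{1}^{k + 1} - y_{1}^{k}}^{2} + \frac{L_{1}\left(x_{2}^{k}\right)^{2}}{2 s_{1}^{k}}\norm{x_{1}^{k} - z_{1}^{k}}^{2}.
\end{align*}

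Next I would eliminate the four norms by means of Proposition~\ref{P:BasicFacts}: items (i) and (ii) replace $\norm{x_{1}^{k} - y_{1}^{k}}^{2}$ and $\norm{x_{1}^{k} - z_{1}^{k}}^{2}$ by $2(\alpha_{1}^{k})^{2}\Delta_{1}^{k}$ and $2(\beta_{1}^{k})^{2}\Delta_{1}^{k}$, while $\norm{x_{1}^{k + 1} - x_{1}^{k}}^{2} = 2\Delta_{1}^{k + 1}$ by definition \eqref{D:Delta}. The delicate point is the term $-\tfrac{\tau_{1}^{k}}{2}\norm{x_{1}^{k + 1} - y_{1}^{k}}^{2}$: because its coefficient is negative, the \emph{lower} bound of Proposition~\ref{P:BasicFacts}(iii) must be used and it yields an \emph{upper} bound, contributing $-\tau_{1}^{k}(1 - \alpha_{1}^{k})\Delta_{1}^{k + 1} + \tau_{1}^{k}\alpha_{1}^{k}(1 - \alpha_{1}^{k})\Delta_{1}^{k}$. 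Collecting the $\Delta_{1}^{k}$ and $\Delta_{1}^{k + 1}$ contributions (the pure $(\alpha_{1}^{k})^{2}$ terms cancel) then produces precisely the coefficients $\tfrac{1}{s_{1}^{k}}(L_{1}(x_{2}^{k})^{2}(\beta_{1}^{k})^{2} + s_{1}^{k}\tau_{1}^{k}\alpha_{1}^{k})$ and $L_{1}(x_{2}^{k}) + s_{1}^{k} - \tau_{1}^{k}(1 - \alpha_{1}^{k})$ appearing in the statement.

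An identical argument applied to the second block --- now with $h\left(\cdot\right) := H\left(x_{1}^{k + 1} , \cdot\right)$, $L_{h} = L_{2}\left(x_{1}^{k + 1}\right)$, $\s := f_{2}$, $t := \tau_{2}^{k}$, $v := y_{2}^{k}$, $w := z_{2}^{k}$, $u^{+} := x_{2}^{k + 1}$, $u := x_{2}^{k}$ and the update \eqref{iPALM:StepX2:3} --- gives the analogous bound on $H(x_{1}^{k + 1} , x_{2}^{k + 1}) + f_{2}(x_{2}^{k + 1})$ in terms of $H(x_{1}^{k + 1} , x_{2}^{k})$ plus the $\Delta_{2}^{k}$ and $\Delta_{2}^{k + 1}$ terms. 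Adding $f_{2}(x_{2}^{k})$ to the first estimate and $f_{1}(x_{1}^{k + 1})$ to the second, the left-hand side of the second becomes $F(\bx^{k + 1})$ while its right-hand side coincides with the left-hand side of the first (the coupling value $H(x_{1}^{k + 1} , x_{2}^{k})$ is shared); substituting the first estimate, whose right-hand side is $F(\bx^{k})$ plus the block-1 terms, yields the claimed inequality. I expect the only real pitfall to be the bookkeeping of signs --- in particular using Proposition~\ref{P:BasicFacts}(iii) in the correct direction against the negative coefficient --- together with the routine verification that the partial functions $H(\cdot , x_{2}^{k})$ and $H(x_{1}^{k + 1} , \cdot)$ carry the partial Lipschitz moduli furnished by Assumption~\ref{AssumptionsA}(iii).
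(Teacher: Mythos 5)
Your proposal is correct and follows essentially the same route as the paper's proof: Lemma \ref{L:ProxIne} applied blockwise with exactly the same instantiations ($h = H(\cdot,x_2^k)$ resp.\ $H(x_1^{k+1},\cdot)$, $u = x_i^k$, $u^+ = x_i^{k+1}$, $v = y_i^k$, $w = z_i^k$, $s = s_i^k$), followed by Proposition \ref{P:BasicFacts} to convert the four squared norms (with item (iii) used against the negative coefficient, as you note), and finally adding the two block estimates. The coefficient bookkeeping you describe, including the cancellation $\tau_1^k(\alpha_1^k)^2 + \tau_1^k\alpha_1^k(1-\alpha_1^k) = \tau_1^k\alpha_1^k$, matches the paper's computation exactly.
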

	\begin{proof}
		Fix $k \geq 1$. Under our Assumption \ref{AssumptionsA}(ii), the function $x_{1} \rightarrow H
		\left(x_{1} , x_{2}\right)$ ($x_{2}$ is fixed) is differentiable and has a Lipschitz continuous 
		gradient with moduli $L_{1}\left(x_{2}\right)$. Using the iterative step \eqref{iPALM:StepX1:3}, 
		applying Lemma \ref{L:ProxIne} for $h\left(\cdot\right) := H\left(\cdot , x_{2}^{k}\right)$, $\s 
		:= f_{1}$ and $t := \tau_{1}^{k}$ with the points $u = x_{1}^{k}$, $u^{+} = x_{1}^{k + 1}$, $v = 
		y_{1}^{k}$ and $w = z_{1}^{k}$ yields that
		\begin{align}
        		H\left(x_{1}^{k + 1} , x_{2}^{k}\right) + f_{1}\left(x_{1}^{k + 1}\right) & \leq 
        		H\left(x_{1}^{k} , x_{2}^{k}\right) + f_{1}\left(x_{1}^{k}\right) + \frac{L_{1}(x_{2}^{k}) + 
        		s_{1}^{k}}{2}\norm{x_{1}^{k + 1} - x_{1}^{k}}^{2} \nonumber \\
        		& + \frac{\tau_{1}^{k}}{2}\norm{x_{1}^{k} - y_{1}^{k}}^{2} - \frac{\tau_{1}^{k}}{2}
        		\norm{x_{1}^{k + 1} - y_{1}^{k}}^{2} + \frac{L_{1}(x_{2}^{k})^{2}}{2s_{1}^{k}}\norm{x_{1}
        		^{k} - z_{1}^{k}}^{2} \nonumber \\
        		& \leq H\left(x_{1}^{k} , x_{2}^{k}\right) + f_{1}\left(x_{1}^{k}\right) + \left(L_{1}(x_{2}
        		^{k}) + s_{1}^{k}\right)\Delta_{1}^{k + 1} + \tau_{1}^{k}\left(\alpha_{1}^{k}\right)^{2}
        		\Delta_{1}^{k} \nonumber \\
        		& - \tau_{1}^{k}\left(\left(1 - \alpha_{1}^{k}\right)\Delta_{1}^{k + 1} + \alpha_{1}^{k}
        		\left(\alpha_{1}^{k} - 1\right)\Delta_{1}^{k}\right) + \frac{L_{1}(x_{2}^{k})^{2}
        		\left(\beta_{1}^{k}\right)^{2}}{s_{1}^{k}}\Delta_{1}^{k} \nonumber \\
        		& = H\left(x_{1}^{k} , x_{2}^{k}\right) + f_{1}\left(x_{1}^{k}\right) + \left(L_{1}
        		(x_{2}^{k}) + s_{1}^{k} - \tau_{1}^{k}\left(1 - \alpha_{1}^{k}\right)\right)\Delta_{1}^{k + 
        		1} \nonumber \\
        		& + \frac{1}{s_{1}^{k}}\left(L_{1}(x_{2}^{k})^{2}\left(\beta_{1}^{k}\right)^{2} + s_{1}^{k}
        		\tau_{1}^{k}\alpha_{1}^{k}\right)\Delta_{1}^{k}, \label{TechC1:1}
        	\end{align}
        where the second inequality follows from Proposition \ref{P:BasicFacts}. Repeating all the
        arguments above on the iterative step \eqref{iPALM:StepX2:3} yields the following
		\begin{align}
        		H\left(x_{1}^{k + 1} , x_{2}^{k + 1}\right) + f_{2}\left(x_{2}^{k + 1}\right) & \leq H
        		\left(x_{1}^{k + 1} , x_{2}^{k}\right) + f_{2}\left(x_{2}^{k}\right) + \left(L_{2}(x_{1}^{k 
        		+ 1}) + s_{2}^{k} - \tau_{2}^{k}\left(1 - \alpha_{2}^{k}\right)\right)\Delta_{2}^{k + 1} 
        		\nonumber \\
        		& + \frac{1}{s_{2}^{k}}\left(L_{2}(x_{1}^{k + 1})^{2}\left(\beta_{2}^{k}\right)^{2} + s_{2}
        		^{k}\tau_{2}^{k}\alpha_{2}^{k}\right)\Delta_{2}^{k}. \label{TechC1:2}
        \end{align}
		By adding \eqref{TechC1:1} and \eqref{TechC1:2} we get
		\begin{align*}
        		F\left(\bx^{k + 1}\right) & \leq F\left(\bx^{k}\right) + \frac{1}{s_{1}^{k}}\left(L_{1}
        		(x_{2}^{k})^{2}\left(\beta_{1}^{k}\right)^{2} + s_{1}^{k}\tau_{1}^{k}\alpha_{1}^{k}\right)
        		\Delta_{1}^{k} + \frac{1}{s_{2}^{k}}\left(L_{2}(x_{1}^{k + 1})^{2}\left(\beta_{2}^{k}
        		\right)^{2} + s_{2}^{k}\tau_{2}^{k}\alpha_{2}^{k}\right)\Delta_{2}^{k} \\
        		& + \left(L_{1}(x_{2}^{k}) + s_{1}^{k} - \tau_{1}^{k}\left(1 - \alpha_{1}^{k}\right)\right)
        		\Delta_{1}^{k + 1} + \left(L_{2}(x_{1}^{k + 1}) + s_{2}^{k} - \tau_{2}^{k}\left(1 - 
        		\alpha_{2}^{k}\right)\right)\Delta_{2}^{k + 1}.
        \end{align*}	
        This proves the desired result.
	\end{proof}
	Before we proceed and for the sake of simplicity of our developments we would like to chose the 
	parameters $s_{1}^{k}$ and $s_{2}^{k}$ for all $k \in \nn$. The best choice can be derived by 
	minimizing the right-hand side of \eqref{L:ProxIne:2} with respect to $s$. Simple computations 
	yields that the minimizer should be
	\begin{equation*}
		s = L_{h}\frac{\norm{u - w}}{\norm{u^{+} - u}},
	\end{equation*}
	where $u , u^{+} , w$ and $L_{h}$ are all in terms of Lemma \ref{L:ProxIne}. In Proposition 
	\ref{TechC1} we have used Lemma \ref{L:ProxIne} with the following choices $u = x_{1}^{k}$, $u^{+} = 
	x_{1}^{k + 1}$ and $w = z_{1}^{k}$. Thus
	\begin{equation*}
		s_{1}^{k} = L_{1}(x_{2}^{k})\frac{\norm{x_{1}^{k} - z_{1}^{k}}}{\norm{x_{1}^{k + 1} - x_{1}
		^{k}}} = L_{1}(x_{2}^{k})\beta_{1}^{k}\frac{\norm{x_{1}^{k} - x_{1}^{k - 1}}}{\norm{x_{1}^{k + 
		1} - x_{1}^{k}}},
	\end{equation*}
	where the last equality follows from step \eqref{iPALM:StepX1:2}. Thus, from now on, we will use the 
	following parameters:
	\begin{equation} \label{ParameterS}
		s_{1}^{k} = L_{1}(x_{2}^{k})\beta_{1}^{k} \quad \text{and} \quad s_{2}^{k} = L_{2}(x_{1}^{k + 
		1})\beta_{2}^{k}, \quad  \forall \,\, k \in \nn.
	\end{equation}
	An immediate consequence of this choice of parameters which combined with Proposition \ref{TechC1} 
	is recorded now.
	\begin{corollary} \label{C:TechC1}
		Suppose that Assumption \ref{AssumptionsA} holds. Let $\left\{ \left(x_{1}^{k} , x_{2}^{k}
		\right) \right\}_{k \in \nn}$ be a sequence generated by iPALM, then for all $k \in \nn$, we 
		have that
		\begin{align*}
        		F\left(\bx^{k + 1}\right) & \leq F\left(\bx^{k}\right) + \left(L_{1}(x_{2}^{k})\beta_{1}^{k}  
        		+ \tau_{1}^{k}\alpha_{1}^{k}\right)\Delta_{1}^{k} + \left(L_{2}(x_{1}^{k + 1})\beta_{2}^{k} 
        		+ \tau_{2}^{k}\alpha_{2}^{k}\right)\Delta_{2}^{k} \\
        		& + \left(\left(1 + \beta_{1}^{k}\right)L_{1}(x_{2}^{k}) - \tau_{1}^{k}\left(1 - \alpha_{1}
        		^{k}\right)\right)\Delta_{1}^{k + 1} + \left(\left(1 + \beta_{2}^{k}\right)L_{2}(x_{1}^{k + 
        		1}) - \tau_{2}^{k}\left(1 - \alpha_{2}^{k}\right)\right)\Delta_{2}^{k + 1}.
        \end{align*}	
	\end{corollary}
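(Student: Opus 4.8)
The plan is to specialize the inequality established in Proposition~\ref{TechC1}, which is valid for \emph{arbitrary} positive scalars $s_{1}^{k}$ and $s_{2}^{k}$, to the concrete values prescribed in \eqref{ParameterS}, namely $s_{1}^{k} = L_{1}(x_{2}^{k})\beta_{1}^{k}$ and $s_{2}^{k} = L_{2}(x_{1}^{k+1})\beta_{2}^{k}$. Once these are substituted, the statement follows by a purely algebraic simplification of the four coefficients appearing in front of $\Delta_{1}^{k}$, $\Delta_{2}^{k}$, $\Delta_{1}^{k+1}$ and $\Delta_{2}^{k+1}$; no further use of Assumption~\ref{AssumptionsA} is required beyond what Proposition~\ref{TechC1} already invokes.

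Concretely, I would first treat the coefficient multiplying $\Delta_{1}^{k}$. Substituting $s_{1}^{k} = L_{1}(x_{2}^{k})\beta_{1}^{k}$ gives
\begin{equation*}
\frac{1}{s_{1}^{k}}\left(L_{1}(x_{2}^{k})^{2}\left(\beta_{1}^{k}\right)^{2} + s_{1}^{k}\tau_{1}^{k}\alpha_{1}^{k}\right) = \frac{L_{1}(x_{2}^{k})^{2}\left(\beta_{1}^{k}\right)^{2}}{L_{1}(x_{2}^{k})\beta_{1}^{k}} + \tau_{1}^{k}\alpha_{1}^{k} = L_{1}(x_{2}^{k})\beta_{1}^{k} + \tau_{1}^{k}\alpha_{1}^{k},
\end{equation*}
which is exactly the coefficient appearing in the corollary. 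In the same way, the coefficient in front of $\Delta_{1}^{k+1}$ becomes
\begin{equation*}
L_{1}(x_{2}^{k}) + s_{1}^{k} - \tau_{1}^{k}\left(1 - \alpha_{1}^{k}\right) = \left(1 + \beta_{1}^{k}\right)L_{1}(x_{2}^{k}) - \tau_{1}^{k}\left(1 - \alpha_{1}^{k}\right).
\end{equation*}
The two $\Delta_{2}$--coefficients are handled identically, using $s_{2}^{k} = L_{2}(x_{1}^{k+1})\beta_{2}^{k}$. Collecting the four simplified coefficients reproduces the claimed inequality.

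The only point that needs care is the degenerate case $\beta_{i}^{k} = 0$, in which the prescribed value $s_{i}^{k} = 0$ is not admissible in Proposition~\ref{TechC1} (which requires $s_{i}^{k} > 0$), so the division above is formally undefined. I expect this to be the sole obstacle, and it is easily dispatched: the offending term $\tfrac{1}{s_{i}^{k}}L_{i}(\cdot)^{2}(\beta_{i}^{k})^{2}\Delta_{i}^{k}$ carries the factor $(\beta_{i}^{k})^{2} = 0$ and is therefore identically zero for every admissible $s_{i}^{k} > 0$, while each remaining coefficient in the bound of Proposition~\ref{TechC1} is continuous as $s_{i}^{k}\downarrow 0$. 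Letting $s_{i}^{k}\downarrow 0$ in the inequality of Proposition~\ref{TechC1} thus preserves it and yields precisely the corollary's coefficients for that index, namely $(L_{i}(\cdot)\beta_{i}^{k}+\tau_{i}^{k}\alpha_{i}^{k}=\tau_{i}^{k}\alpha_{i}^{k})$ in front of $\Delta_{i}^{k}$ and $((1+\beta_{i}^{k})L_{i}(\cdot)-\tau_{i}^{k}(1-\alpha_{i}^{k})=L_{i}(\cdot)-\tau_{i}^{k}(1-\alpha_{i}^{k}))$ in front of $\Delta_{i}^{k+1}$. Combining the generic case $\beta_{i}^{k}>0$, handled by the direct substitution above, with this limiting argument for $\beta_{i}^{k}=0$ completes the proof.
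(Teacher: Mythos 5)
Your proposal is correct and follows exactly the paper's route: the corollary is obtained by substituting the choice $s_{1}^{k} = L_{1}(x_{2}^{k})\beta_{1}^{k}$, $s_{2}^{k} = L_{2}(x_{1}^{k+1})\beta_{2}^{k}$ from \eqref{ParameterS} into Proposition~\ref{TechC1} and simplifying the four coefficients. Your limiting argument for $\beta_{i}^{k}=0$ (where the prescribed $s_{i}^{k}$ vanishes and Proposition~\ref{TechC1} formally does not apply) is a careful addition that the paper silently glosses over, and it is needed since a fixed $s_{i}^{k}>0$ would only give the weaker coefficient $L_{i}(\cdot)+s_{i}^{k}-\tau_{i}^{k}\left(1-\alpha_{i}^{k}\right)$ in front of $\Delta_{i}^{k+1}$.
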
	
	Similarly to iPiano, the iPALM algorithm generates a sequence which does not ensure that the 
	function values decrease between two successive elements of the sequence. Thus we can not obtain 
	condition (C1) of Theorem \ref{T:GeneralConvergence}. Following \cite{OCBP2014} we construct an 
	auxiliary function which do enjoy the property of function values decreases. Let $\Psi : \rr^{n_{1} 
	\times n_{2}} \times \rr^{n_{1} \times n_{2}} \rightarrow \left(-\infty , \infty\right]$ be the 
	auxiliary function which is defined as follows
	\begin{equation} \label{D:Psi}
		\Psi_{\delta_{1} , \delta_{2}}\left(\bu\right) := F\left(u_{1}\right) + \frac{\delta_{1}}{2}
		\norm{u_{11} - u_{21}}^{2} + \frac{\delta_{2}}{2}\norm{u_{12} - u_{22}}^{2},
	\end{equation}
	where $\delta_{1} , \delta_{2} > 0$, $u_{1} = \left(u_{11} , u_{12}\right) \in \rr^{n_{1}} \times 
	\rr^{n_{2}}$, $u_{2} = \left(u_{21} , u_{22}\right) \in \rr^{n_{1}} \times \rr^{n_{2}}$ and $\bu = 
	\left(u_{1} , u_{2}\right)$. 
\medskip

	Let $\left\{ \left(x_{1}^{k} , x_{2}^{k}\right) \right\}_{k \in \nn}$ be a sequence generated by 
	iPALM and denote, for all $k \in \nn$, $u_{1}^{k} = \left(x_{1}^{k} , x_{2}^{k}\right)$, $u_{2}^{k} 
	= \left(x_{1}^{k - 1} , x_{2}^{k - 1}\right)$ and $\bu^{k} = \left(u_{1}^{k} , u_{2}^{k}\right)$. We 
	will prove now that the sequence $\Seq{\bu}{k}$ and the function $\Psi$ defined above do satisfy 
	conditions (C1), (C2) and (C3) of Theorem \ref{T:GeneralConvergence}. We begin with proving 
	condition (C1). To this end we will show that there are choices of $\delta_{1} > 0$ and $\delta_{2} 
	> 0$, such that there exists $\rho_{1} > 0$ which satisfies
	\begin{equation*}
		\rho_{1}\norm{\bu^{k + 1} - \bu^{k}}^{2} \leq \Psi\left(\bu^{k}\right) - \Psi\left(\bu^{k + 1}
		\right).
	\end{equation*}
	It is easy to check that using the notations defined in \eqref{D:Delta}, we have, for all $k \in \nn
	$, that
	\begin{equation*}
        	\Psi\left(\bu^{k}\right) = F\left(\bx^{k}\right) + \frac{\delta_{1}}{2}\norm{x_{1}^{k} - x_{1}
        	^{k - 1}}^{2} + \frac{\delta_{2}}{2}\norm{x_{2}^{k} - x_{2}^{k - 1}}^{2} = F\left(\bx^{k}\right) 
        	+ \delta_{1}	\Delta_{1}^{k} + \delta_{2}\Delta_{2}^{k}.
	\end{equation*}
	In order to prove that the sequence $\left\{ \Psi\left(\bu^{k}\right) \right\}_{k \in \nn}$ 
	decreases we will need the following technical result (we provide the proof in Appendix
	\ref{A:Technical}).
	\begin{lemma} \label{L:Technical}
		Consider the functions $g : \rr_{+}^{5} \rightarrow \rr$ and $h : \rr_{+}^{5} \rightarrow \rr$ 
		defined as follow
		\begin{align*}
			g\left(\alpha , \beta , \delta , \tau , L\right) & = \tau\left(1 - \alpha\right) - \left(1 + 
			\beta\right)L - \delta, \\
			h\left(\alpha , \beta , \delta , \tau , L\right) & = \delta - \tau\alpha - L\beta.
		\end{align*}
		Let $\varepsilon > 0$ and $\bar{\alpha} > 0$ be two real numbers for which $0 \leq \alpha \leq 
		\bar{\alpha} < 0.5\left(1 - \varepsilon\right)$. Assume, in addition, that $0 \leq L \leq 
		\lambda$ for some $\lambda > 0$ and $0 \leq \beta \leq \bar{\beta}$ with $\bar{\beta} > 0$. If
		\begin{align}
			\delta_{\ast} & = \frac{\bar{\alpha} + \bar{\beta}}{1 - \varepsilon - 2\bar{\alpha}}\lambda, 
			\label{L:Technical:1} \\
			\tau_{\ast} & = \frac{\left(1 + \varepsilon\right)\delta_{\ast} + \left(1 + \beta\right)L}{1 
			- \alpha}, \label{L:Technical:2}
		\end{align}
		then $g\left(\alpha , \beta , \delta_{\ast} , \tau_{\ast} , L\right) = \varepsilon\delta_{\ast}$ 
		and $h\left(\alpha , \beta , \delta_{\ast} , \tau_{\ast} , L\right) \geq \varepsilon
		\delta_{\ast}$.		
	\end{lemma}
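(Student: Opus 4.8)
The plan is to handle the two assertions separately: the equality for $g$ is a one-line substitution, while the inequality for $h$ requires clearing a denominator and then invoking the parameter bounds together with the explicit value of $\delta_{\ast}$.

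First I would verify the equality $g\left(\alpha , \beta , \delta_{\ast} , \tau_{\ast} , L\right) = \varepsilon\delta_{\ast}$, which is immediate. By the defining relation \eqref{L:Technical:2} we have $\tau_{\ast}\left(1 - \alpha\right) = \left(1 + \varepsilon\right)\delta_{\ast} + \left(1 + \beta\right)L$, and substituting this into $g = \tau_{\ast}\left(1 - \alpha\right) - \left(1 + \beta\right)L - \delta_{\ast}$ the $\left(1 + \beta\right)L$ terms cancel, leaving $\left(1 + \varepsilon\right)\delta_{\ast} - \delta_{\ast} = \varepsilon\delta_{\ast}$. No inequalities enter here.

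For the inequality $h \ge \varepsilon\delta_{\ast}$, I would first rewrite it as $\left(1 - \varepsilon\right)\delta_{\ast} - L\beta \ge \tau_{\ast}\alpha$. Since $\alpha \le \bar{\alpha} < 0.5\left(1 - \varepsilon\right) < 1$, the factor $1 - \alpha$ is strictly positive, so I may multiply through by $1 - \alpha$ and replace $\tau_{\ast}\alpha$ using $\tau_{\ast}\alpha = \frac{\alpha}{1 - \alpha}\left[\left(1 + \varepsilon\right)\delta_{\ast} + \left(1 + \beta\right)L\right]$ from \eqref{L:Technical:2}. Collecting the coefficients of $\delta_{\ast}$ and of $L$ separately, the $\delta_{\ast}$ coefficient simplifies to $\left(1 - \alpha\right)\left(1 - \varepsilon\right) - \alpha\left(1 + \varepsilon\right) = \left(1 - \varepsilon\right) - 2\alpha$, while the $L$ coefficient collapses (the $\alpha\beta$ cross terms cancel) to $-\left(\alpha + \beta\right)$. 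Thus the whole claim reduces to the single clean inequality
\begin{equation*}
	\delta_{\ast}\left[\left(1 - \varepsilon\right) - 2\alpha\right] \ge L\left(\alpha + \beta\right).
\end{equation*}

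Finally I would close this by monotonicity in $\alpha$ combined with the explicit value of $\delta_{\ast}$ in \eqref{L:Technical:1}. Since $\alpha \le \bar{\alpha}$ we have $\left(1 - \varepsilon\right) - 2\alpha \ge \left(1 - \varepsilon\right) - 2\bar{\alpha} > 0$, so, using $\delta_{\ast} \ge 0$,
\begin{equation*}
	\delta_{\ast}\left[\left(1 - \varepsilon\right) - 2\alpha\right] \ge \delta_{\ast}\left[\left(1 - \varepsilon\right) - 2\bar{\alpha}\right] = \left(\bar{\alpha} + \bar{\beta}\right)\lambda,
\end{equation*}
where the last equality is exactly the cancellation built into the definition \eqref{L:Technical:1} of $\delta_{\ast}$. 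Because $L \le \lambda$ and $\alpha + \beta \le \bar{\alpha} + \bar{\beta}$, the right-hand side dominates $L\left(\alpha + \beta\right)$, which is the reduced inequality, and hence $h \ge \varepsilon\delta_{\ast}$. The only delicate point — the one I would check most carefully — is the positivity of $1 - \varepsilon - 2\bar{\alpha}$ (ensured by $\bar{\alpha} < 0.5\left(1 - \varepsilon\right)$), which is used twice: once so that $\delta_{\ast}$ is well-defined and nonnegative, and once in the monotonicity step; everything else is routine algebraic bookkeeping.
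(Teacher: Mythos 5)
Your proposal is correct and follows essentially the same route as the paper's own proof: the equality for $g$ by direct substitution of \eqref{L:Technical:2}, reduction of the $h$-inequality (after clearing the positive factor $1-\alpha$) to the key inequality $\left(1 - \varepsilon - 2\alpha\right)\delta_{\ast} \geq L\left(\alpha + \beta\right)$, and then closing via $\alpha \leq \bar{\alpha}$, $\beta \leq \bar{\beta}$, $L \leq \lambda$ together with the definition of $\delta_{\ast}$. Your write-up is in fact slightly more explicit than the paper's at the final step, where the paper only cites the bounds without displaying the monotonicity chain.
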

	Based on the mentioned lemma we will set, from now on, the parameters $\tau_{1}^{k}$ and $\tau_{2}
	^{k}$ for all $k \in \nn$, as follow
\vspace{0.1in}

	{\center\fbox{\parbox{16cm}{
		\begin{equation} \label{ParameterTau}
			\tau_{1}^{k} = \frac{\left(1 + \varepsilon\right)\delta_{1}^{k} + \left(1 + \beta_{1}^{k}
			\right)L_{1}(x_{2}^{k})}{1 - \alpha_{1}^{k}} \quad \text{and} \quad \tau_{2}^{k} = 
			\frac{\left(1 + \varepsilon\right)\delta_{2}^{k} + \left(1 + \beta_{2}^{k}\right)L_{2}(x_{1}
			^{k + 1})}{1 - \alpha_{2}^{k}}. 
		\end{equation}}}}
\vspace{0.2in}	

	\begin{remark} \label{R:StepConvex}
		If we additionally know that $f_{i}$, $i = 1 , 2$, is convex, then a tighter bound can be used 
		in Proposition \ref{L:ProxIne} as described in Remark \ref{R:ProxIneConvex}. Using the tight 
		bound will improve the possible parameter $\tau_{i}^{k}$ that can be used (\cf 
		\eqref{ParameterTau}). Indeed, in the convex case, \eqref{L:Technical:1} and 
		\eqref{L:Technical:2} are given by
		\begin{align}
			\delta_{\ast} & = \frac{\bar{\alpha} + 2\bar{\beta}}{2\left(1 - \varepsilon - \bar{\alpha}
			\right)}\lambda, \label{R:StepConvex:1} \\
			\tau_{\ast} & = \frac{\left(1 + \varepsilon\right)\delta_{\ast} + \left(1 + \beta\right)L}
			{2 - \alpha}, \label{R:StepConvex:2}
		\end{align}		
		Thus, the parameters $\tau_{i}^{k}$, $i = 1 , 2$, can be taken in the convex case as follows
		\begin{equation*}
			\tau_{1}^{k} = \frac{\left(1 + \varepsilon\right)\delta_{1}^{k} + \left(1 + \beta_{1}^{k}
			\right)L_{1}(x_{2}^{k})}{2 - \alpha_{1}^{k}} \quad \text{and} \quad \tau_{2}^{k} = 
			\frac{\left(1 + \varepsilon\right)\delta_{2}^{k} + \left(1 + \beta_{2}^{k}\right)L_{2}(x_{1}
			^{k + 1})}{2 - \alpha_{2}^{k}}. 
		\end{equation*}
		This means that in the convex case, we can take smaller $\tau_{i}^{k}$, $i = 1 , 2$, which means  
		larger step-size in the algorithm. On top of that, in the case that $f_{i}$, $i = 1 , 2$, is 
		convex, it should be noted that a careful analysis shows that in this case the parameters 
		$\alpha_{i}^{k}$, $i = 1 , 2$, can be in the interval $\left[0 , 1\right)$ and not $\left[0 , 
		0.5\right)$ as stated in Lemma \ref{L:Technical} (see also Assumption \ref{AssumptionB} below).
	\end{remark}
	In order to prove condition C1 and according to Lemma \ref{L:Technical}, we will need to restrict 
	the possible values of the parameters $\alpha_{i}^{k}$ and $\beta_{i}^{k}$, $i = 1 , 2$, for all $k 
	\in \nn$. The following assumption is essential for our analysis.
	\begin{assumption} \label{AssumptionB}
		Let $\varepsilon > 0$ be an arbitrary small number. For all $k \in \nn$ and $i = 1 , 2$, there 
		exist $0 < {\bar \alpha_{i}} < \left(1/2\right)\left(1 - \varepsilon\right)$ such that $0 \leq 
		\alpha_{i}^{k} \leq {\bar \alpha_{i}}$. In addition, $0 \leq \beta_{i}^{k} \leq {\bar \beta_{i}}
		$ for some $\bar{\beta_{i}} > 0$.
	\end{assumption}	
	\begin{remark} \label{R:BoundedTau}
		It should be noted that using Assumption \ref{AssumptionB}, we obtain that $\tau_{1}^{k} \leq 
		\tau_{1}^{+}$ where
		\begin{equation*}
			\tau_{1}^{+} = \frac{\left(1 + \varepsilon\right)\delta_{1} + \left(1 + \bar{\beta_{1}}
			\right)\lambda_{1}^{+}}{1 - {\bar \alpha_{1}}},
		\end{equation*}	
		where $\delta_{1}$ is given in \eqref{L:Technical:1}. Similar arguments show that
		\begin{equation*}
			\tau_{2}^{k} \leq \tau_{2}^{+} := \frac{\left(1 + \varepsilon\right)\delta_{2} + 
			\left(1 + \bar{\beta_{2}}\right)\lambda_{2}^{+}}{1 - {\bar \alpha_{2}}}.
		\end{equation*}	
	\end{remark}
	Now we will prove a descent property of $\left\{ \Psi\left(\bu^{k}\right) \right\}_{k \in \nn}$.
	\begin{proposition} \label{P:C1}
		Let $\Seq{\bx}{k}$ be a sequence generated by iPALM which is assumed to be bounded. Suppose that 
		Assumptions \ref{AssumptionsA} and \ref{AssumptionB} hold true. Then, for all $k \in \nn$ and 
		$\varepsilon > 0$, we have
        \begin{equation*}
        		\rho_{1}\norm{\bu^{k + 1} - \bu^{k}}^{2} \leq \Psi\left(\bu^{k}\right) - \Psi\left(\bu^{k + 
        		1}\right),
        \end{equation*}
        where $\bu^{k} = \left(\bx^{k} , \bx^{k - 1}\right)$, $k \in \nn$ and $\rho_{1} = 
        \left(\varepsilon/2\right)\min \left\{ \delta_{1} , \delta_{2} \right\}$ with
		\begin{equation} \label{P:C1:1}
			\delta_{1} = \frac{{\bar \alpha_{1}} + \bar{\beta_{1}}}{1 - \varepsilon - 2{\bar 
			\alpha_{1}}}\lambda_{1}^{+} \quad \text{and} \quad \delta_{2} = \frac{{\bar \alpha_{2}} + 
			\bar{\beta_{2}}}{1 - \varepsilon - 2{\bar \alpha_{2}}}\lambda_{2}^{+}.
		\end{equation}		    
	\end{proposition}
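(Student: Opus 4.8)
The plan is to subtract the values of the auxiliary function $\Psi$ at two consecutive iterates and to recognise the resulting coefficients as the two functions $g$ and $h$ of Lemma~\ref{L:Technical}. First I would use the identity $\Psi\left(\bu^{k}\right) = F\left(\bx^{k}\right) + \delta_{1}\Delta_{1}^{k} + \delta_{2}\Delta_{2}^{k}$ (and the same at level $k+1$) to write
\[
	\Psi\left(\bu^{k}\right) - \Psi\left(\bu^{k + 1}\right) = F\left(\bx^{k}\right) - F\left(\bx^{k + 1}\right) + \delta_{1}\left(\Delta_{1}^{k} - \Delta_{1}^{k + 1}\right) + \delta_{2}\left(\Delta_{2}^{k} - \Delta_{2}^{k + 1}\right).
\]
Then I would insert the upper bound for $F\left(\bx^{k + 1}\right)$ supplied by Corollary~\ref{C:TechC1} and collect the four quantities $\Delta_{1}^{k}, \Delta_{1}^{k + 1}, \Delta_{2}^{k}, \Delta_{2}^{k + 1}$. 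The coefficient of $\Delta_{i}^{k}$ comes out to be exactly $h\left(\alpha_{i}^{k} , \beta_{i}^{k} , \delta_{i} , \tau_{i}^{k} , L_{i}\right) = \delta_{i} - \tau_{i}^{k}\alpha_{i}^{k} - L_{i}\beta_{i}^{k}$, and the coefficient of $\Delta_{i}^{k + 1}$ comes out to be exactly $g\left(\alpha_{i}^{k} , \beta_{i}^{k} , \delta_{i} , \tau_{i}^{k} , L_{i}\right) = \tau_{i}^{k}\left(1 - \alpha_{i}^{k}\right) - \left(1 + \beta_{i}^{k}\right)L_{i} - \delta_{i}$, where $L_{1} = L_{1}(x_{2}^{k})$ and $L_{2} = L_{2}(x_{1}^{k + 1})$.

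Next I would verify the hypotheses of Lemma~\ref{L:Technical} for each block and apply it. Boundedness of $\Seq{\bx}{k}$ together with Assumption~\ref{AssumptionsA}(iv) yields $L_{i} \leq \lambda_{i}^{+}$, giving the requirement $L \leq \lambda$; Assumption~\ref{AssumptionB} provides $0 \leq \alpha_{i}^{k} \leq \bar{\alpha_{i}} < \tfrac12\left(1 - \varepsilon\right)$ and $0 \leq \beta_{i}^{k} \leq \bar{\beta_{i}}$. Since the prescribed $\delta_{i}$ in \eqref{P:C1:1} and $\tau_{i}^{k}$ in \eqref{ParameterTau} are precisely the quantities $\delta_{\ast}$ and $\tau_{\ast}$ of the lemma, it delivers $g\left(\alpha_{i}^{k} , \beta_{i}^{k} , \delta_{i} , \tau_{i}^{k} , L_{i}\right) = \varepsilon\delta_{i}$ and $h\left(\alpha_{i}^{k} , \beta_{i}^{k} , \delta_{i} , \tau_{i}^{k} , L_{i}\right) \geq \varepsilon\delta_{i}$. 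Hence all four coefficients are bounded below by $\varepsilon\delta_{i}$ (for the respective block), so
\[
	\Psi\left(\bu^{k}\right) - \Psi\left(\bu^{k + 1}\right) \geq \varepsilon\delta_{1}\left(\Delta_{1}^{k} + \Delta_{1}^{k + 1}\right) + \varepsilon\delta_{2}\left(\Delta_{2}^{k} + \Delta_{2}^{k + 1}\right).
\]

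Finally I would convert this into condition (C1). Because $\bu^{k} = \left(\bx^{k} , \bx^{k - 1}\right)$, one has $\norm{\bu^{k + 1} - \bu^{k}}^{2} = 2\left(\Delta_{1}^{k} + \Delta_{2}^{k} + \Delta_{1}^{k + 1} + \Delta_{2}^{k + 1}\right)$. Replacing $\delta_{1}$ and $\delta_{2}$ by their minimum and taking $\rho_{1} = \left(\varepsilon/2\right)\min\left\{ \delta_{1} , \delta_{2} \right\}$ then gives $\Psi\left(\bu^{k}\right) - \Psi\left(\bu^{k + 1}\right) \geq \rho_{1}\norm{\bu^{k + 1} - \bu^{k}}^{2}$, which is the assertion. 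The only genuinely delicate point is the coefficient-matching step: one must check that the cross-term $-\delta_{i}\Delta_{i}^{k + 1}$ arising from $\Psi\left(\bu^{k + 1}\right)$ and the term $+\delta_{i}\Delta_{i}^{k}$ from $\Psi\left(\bu^{k}\right)$ are absorbed so that the coefficients reproduce $g$ and $h$ verbatim, and to note that positivity of $\rho_{1}$ rests on $\bar{\alpha_{i}} < \tfrac12\left(1 - \varepsilon\right)$, which is exactly what keeps the denominator of $\delta_{i}$ in \eqref{P:C1:1} positive.
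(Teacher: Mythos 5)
Your proposal is correct and follows essentially the same route as the paper's own proof: expand $\Psi\left(\bu^{k}\right) - \Psi\left(\bu^{k + 1}\right)$, insert the bound from Corollary \ref{C:TechC1}, recognize the coefficients of $\Delta_{i}^{k + 1}$ and $\Delta_{i}^{k}$ as $g$ and $h$ from Lemma \ref{L:Technical}, apply the lemma under Assumptions \ref{AssumptionsA}(iv) and \ref{AssumptionB}, and finish with the identity $\norm{\bu^{k + 1} - \bu^{k}}^{2} = 2\left(\Delta_{1}^{k} + \Delta_{2}^{k} + \Delta_{1}^{k + 1} + \Delta_{2}^{k + 1}\right)$. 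Your factor-of-two bookkeeping leading to $\rho_{1} = \left(\varepsilon/2\right)\min\left\{ \delta_{1} , \delta_{2} \right\}$ is in fact slightly cleaner than the paper's intermediate display, which elides the step $\varepsilon\delta_{i} \geq 2\rho_{1}$.
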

	\begin{proof}
		From the definition of $\Psi$ (see \eqref{D:Psi}) and Corollary \ref{C:TechC1} we obtain that
		\begin{align*}
        		\Psi\left(\bu^{k}\right) - \Psi\left(\bu^{k + 1}\right) & = F\left(\bx^{k}\right) +
        		\delta_{1}\Delta_{1}^{k} + \delta_{2}\Delta_{2}^{k} - F\left(\bx^{k + 1}\right) - \delta_{1}
        		\Delta_{1}^{k + 1} - \delta_{2}\Delta_{2}^{k + 1} \\
        		& \geq \left(\tau_{1}^{k}\left(1 - \alpha_{1}^{k}\right) - \left(1 + \beta_{1}^{k}
        		\right)L_{1}(x_{2}^{k}) - \delta_{1}\right)\Delta_{1}^{k + 1} \nonumber \\
 	        	& + \left(\delta_{1} - \tau_{1}^{k}\alpha_{1}^{k} - L_{1}(x_{2}^{k})\beta_{1}^{k}\right)
 	        	\Delta_{1}^{k} \nonumber \\
        		& + \left(\tau_{2}^{k}\left(1 - \alpha_{2}^{k}\right) - \left(1 + \beta_{2}^{k}\right)L_{2}
        		(x_{1}^{k + 1}) - \delta_{2}\right)\Delta_{2}^{k + 1} \nonumber \\
        		& + \left(\delta_{2} - \tau_{2}^{k}\alpha_{2}^{k} - L_{2}(x_{1}^{k + 1})\beta_{2}^{k}\right)
        		\Delta_{2}^{k} \\
			& = a_{1}^{k}\Delta_{1}^{k + 1} + b_{1}^{k}\Delta_{1}^{k} + a_{2}^{k}\Delta_{2}^{k + 1} + 
			b_{2}^{k}\Delta_{2}^{k},        
		\end{align*}	
		where 
		\begin{align*}
        		a_{1}^{k} & := \tau_{1}^{k}\left(1 - \alpha_{1}^{k}\right) - \left(1 + \beta_{1}^{k}
        		\right)L_{1}(x_{2}^{k}) - \delta_{1} \quad \text{and} \quad b_{1}^{k} := \delta_{1} - 
        		\tau_{1}^{k}\alpha_{1}^{k} - L_{1}(x_{2}^{k})\beta_{1}^{k}, \nonumber \\
        		a_{2}^{k} & := \tau_{2}^{k}\left(1 - \alpha_{2}^{k}\right) - \left(1 + \beta_{2}^{k}
        		\right)L_{2}(x_{1}^{k + 1}) - \delta_{2} \quad \text{and} \quad b_{2}^{k} := \delta_{2} - 
        		\tau_{2}^{k}\alpha_{2}^{k} - L_{2}(x_{1}^{k + 1})\beta_{2}^{k}.	
        	\end{align*}	        
        Let $\varepsilon > 0$ be an arbitrary. Using \eqref{ParameterTau} and \eqref{P:C1:1} with the 
        notations of Lemma \ref{L:Technical} we immediately see that $a_{1}^{k} = g_{1}\left(\alpha_{1}
        ^{k} , \beta_{1}^{k} , \delta_{1} , \tau_{1}^{k} , L_{1}(x_{2}^{k})\right)$ and $a_{2}^{k} = 
        g_{2}\left(\alpha_{2}^{k} , \beta_{2}^{k} , \delta_{2} , \tau_{2}^{k} , L_{2}(x_{1}^{k + 1})
        \right)$. From Assumptions \ref{AssumptionsA} and \ref{AssumptionB} we get that the requirements 
        of Lemma \ref{L:Technical} are fulfilled, which means that Lemma \ref{L:Technical} can be 
        applied. Thus $a_{1}^{k} = \varepsilon\delta_{1}$ and $a_{2}^{k} = \varepsilon\delta_{2}$. Using 
        again the notions of Lemma \ref{L:Technical}, we have that $b_{1}^{k} = h_{1}\left(\alpha_{1}
        ^{k} , \beta_{1}^{k} , \delta_{1} , \tau_{1}^{k} , L_{1}(x_{2}^{k})\right)$ and $b_{2}^{k} = 
        h_{2}\left(\alpha_{2}^{k} , \beta_{2}^{k} , \delta_{2} , \tau_{2}^{k} , L_{2}(x_{1}^{k + 1})
        \right)$. Thus we obtain from Lemma \ref{L:Technical} that $b_{1}^{k} \geq \varepsilon\delta_{1}
        $ and $b_{2}^{k} \geq \varepsilon\delta_{2}$. Hence, for $\rho_{1} = \left(\varepsilon/2\right) 
        \min \left\{ \delta_{1} , \delta_{2} \right\}$, we have
		\begin{align*}
        		\Psi\left(\bu^{k}\right) - \Psi\left(\bu^{k + 1}\right) & \geq a_{1}^{k}\Delta_{1}^{k + 1} + 
        		b_{1}^{k}\Delta_{1}^{k} + a_{2}^{k}\Delta_{2}^{k + 1} + b_{2}^{k}\Delta_{2}^{k} \\
			& \geq \varepsilon\delta_{1}\left(\Delta_{1}^{k + 1} + \Delta_{1}^{k}\right) + \varepsilon
			\delta_{1}\left(\Delta_{2}^{k + 1} + \Delta_{2}^{k}\right) \\		
			& \geq \rho_{1}\left(\Delta_{1}^{k + 1} + \Delta_{1}^{k}\right) + \rho_{1}\left(\Delta_{2}
			^{k + 1} + \Delta_{2}^{k}\right) \\		
			& = \rho_{1}\norm{\bu^{k + 1} - \bu^{k}}^{2},	
		\end{align*}	
		where the last equality follows from \eqref{D:Delta}. This completes the proof.
	\end{proof}
	Now, we will prove that condition (C2) of Theorem \ref{T:GeneralConvergence} holds true for the 
	sequence $\Seq{\bu}{k}$ and the function $\Psi$.
	\begin{proposition} \label{P:ConC2}
		Let $\Seq{\bx}{k}$ be a sequence generated by iPALM which is assumed to be bounded. Suppose that 
		Assumptions \ref{AssumptionsA} and \ref{AssumptionB} hold true. Assume that $\bu^{k} = 
		\left(\bx^{k} , \bx^{k - 1}\right)$, $k \in \nn$. Then, there exists a positive scalar $\rho_{2}
		$ such that for some $\bw^{k} \in \partial \Psi\left(\bu^{k}\right)$ we have
        \begin{equation*}
        		\norm{\bw^{k}} \leq \rho_{2}\norm{\bu^{k} - \bu^{k - 1}}.
       	\end{equation*}
	\end{proposition}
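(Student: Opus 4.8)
The plan is to compute an explicit subgradient $\bw^k \in \partial\Psi(\bu^k)$ block by block, bound its norm, and absorb all terms into a multiple of $\norm{\bu^k - \bu^{k-1}}$. Since $\Psi_{\delta_1,\delta_2}(\bu) = F(u_1) + \tfrac{\delta_1}{2}\norm{u_{11}-u_{21}}^2 + \tfrac{\delta_2}{2}\norm{u_{12}-u_{22}}^2$ is a sum of $F$ (in the $u_1 = (x_1^k, x_2^k)$ variables) plus smooth coupling terms, the subdifferential separates: in the $u_2 = (x_1^{k-1}, x_2^{k-1})$ block the function is smooth, so the corresponding components of $\bw^k$ are exactly the gradients $-\delta_1(x_1^k - x_1^{k-1})$ and $-\delta_2(x_2^k - x_2^{k-1})$, which are already bounded by $\max\{\delta_1,\delta_2\}\norm{\bu^k - \bu^{k-1}}$. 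The real work is in the $u_1$ block.

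First I would write down the optimality (inclusion) condition coming from the proximal steps. From step \eqref{iPALM:StepX1:3}, $x_1^{k} \in \prox_{\tau_1^{k-1}}^{f_1}(y_1^{k-1} - \tfrac{1}{\tau_1^{k-1}}\nabla_{x_1}H(z_1^{k-1}, x_2^{k-1}))$, and writing the first-order condition for this proximal minimization gives
\begin{equation*}
	\tau_1^{k-1}\bigl(y_1^{k-1} - x_1^{k}\bigr) - \nabla_{x_1}H(z_1^{k-1}, x_2^{k-1}) \in \partial f_1(x_1^{k}).
\end{equation*}
Adding $\nabla_{x_1}H(x_1^{k}, x_2^{k})$ (the partial gradient of the smooth part of $F$ at the current iterate) to both sides produces a genuine element of $\partial_{x_1}F(\bx^k) = \partial f_1(x_1^k) + \nabla_{x_1}H(\bx^k)$, and similarly for the $x_2$ component using step \eqref{iPALM:StepX2:3}. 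I would then assemble the full $u_1$-component of $\bw^k$ by also accounting for the derivative of the two quadratic coupling terms with respect to $u_{11}=x_1^k$ and $u_{12}=x_2^k$, which contribute $+\delta_1(x_1^k - x_1^{k-1})$ and $+\delta_2(x_2^k - x_2^{k-1})$.

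The estimation step is then to bound each resulting expression. The $x_1$-component equals
\begin{equation*}
	\tau_1^{k-1}\bigl(y_1^{k-1} - x_1^{k}\bigr) + \nabla_{x_1}H(x_1^{k}, x_2^{k}) - \nabla_{x_1}H(z_1^{k-1}, x_2^{k-1}) + \delta_1(x_1^k - x_1^{k-1}).
\end{equation*}
Using $y_1^{k-1} - x_1^{k} = \alpha_1^{k-1}(x_1^{k-1}-x_1^{k}) + (x_1^{k-1}-x_1^{k})$ type rearrangements (from \eqref{iPALM:StepX1:1}) to express $y_1^{k-1}-x_1^k$ through consecutive differences, and invoking Assumption \ref{AssumptionsA}(v) — global Lipschitz continuity of $\nabla H$ on the bounded set containing the iterates, with constant $M$ — to control the gradient difference by $M\norm{(x_1^k,x_2^k) - (z_1^{k-1},x_2^{k-1})}$, every term is bounded by a constant times one of $\norm{x_1^k - x_1^{k-1}}$, $\norm{x_2^k - x_2^{k-1}}$, or $\norm{x_1^{k-1}-x_1^{k-2}}$. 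The boundedness of the sequence is what makes the Lipschitz moduli $L_i$ and the step-sizes $\tau_i^{k}$ uniformly bounded (via Assumption \ref{AssumptionsA}(iv) and Remark \ref{R:BoundedTau}), so all the constants can be taken uniform in $k$.

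The one point requiring care — and which I expect to be the main obstacle — is that the gradient-difference term and the $z_1^{k-1}$ extrapolation bring in $\norm{x_1^{k-1} - x_1^{k-2}}$, i.e. a \emph{second} consecutive difference, not just $\norm{\bu^k - \bu^{k-1}} = \norm{(\bx^k,\bx^{k-1}) - (\bx^{k-1},\bx^{k-2})}$. But precisely because $\bu^k = (\bx^k, \bx^{k-1})$ packages two consecutive iterates, the quantity $\norm{\bu^k - \bu^{k-1}}^2 = \norm{\bx^k - \bx^{k-1}}^2 + \norm{\bx^{k-1}-\bx^{k-2}}^2$ already contains both $\norm{\bx^k-\bx^{k-1}}$ and $\norm{\bx^{k-1}-\bx^{k-2}}$. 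Hence collecting all bounds and using $a+b \le \sqrt{2}\,\sqrt{a^2+b^2}$ lets me conclude $\norm{\bw^k} \le \rho_2\norm{\bu^k - \bu^{k-1}}$ for an explicit $\rho_2$ depending on $M$, the step-size bounds $\tau_i^+$, the $\lambda_i^+$, and the $\delta_i$. This choice of the lifted state variable is exactly what converts the two-step inertial scheme into a one-step method for which condition (C2) holds, as foreshadowed in the discussion of \cite{ZK93}.
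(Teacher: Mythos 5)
Your proposal is correct and follows essentially the same route as the paper's proof: the same decomposition of $\partial\Psi$ into the $F$-block plus quadratic coupling terms, the same use of the proximal optimality conditions to build an explicit element $\bw^k$, the same invocation of Assumption \ref{AssumptionsA}(v) and Remark \ref{R:BoundedTau} for uniform constants, and the same final estimate $\norm{\bx^{k}-\bx^{k-1}}+\norm{\bx^{k-1}-\bx^{k-2}} \leq \sqrt{2}\norm{\bu^{k}-\bu^{k-1}}$ exploiting the lifted variable. The only blemish is your displayed rearrangement of $y_1^{k-1}-x_1^{k}$, which should read $x_1^{k-1}-x_1^{k}+\alpha_1^{k-1}\left(x_1^{k-1}-x_1^{k-2}\right)$, but your subsequent accounting of the term $\norm{x_1^{k-1}-x_1^{k-2}}$ shows you handle it correctly.
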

	\begin{proof}
		Let $k \geq 2$. By the definition of $\Psi$ (see \eqref{D:Psi}) we have that
        \begin{align*}
        		\partial \Psi\left(\bu^{k}\right) = \left(\partial_{x_{1}} F\left(\bx^{k}\right) + 
        		\delta_{1}\left(x_{1}^{k} - x_{1}^{k - 1}\right) , \partial_{x_{2}} F\left(\bx^{k}\right) + 
        		\delta_{2}\left(x_{2}^{k} - x_{2}^{k - 1}\right) , \delta_{1}\left(x_{1}^{k - 1} - x_{1}
        		^{k}\right), \right. \\
        		& \hspace{-0.99in} \left. \delta_{2}\left(x_{2}^{k - 1} - x_{2}^{k}\right)\right).
       	\end{align*}
       	By the definition of $F$ (see \eqref{Model}) and \cite[Proposition 1, Page 465]{BST2014} we get 
       	that 
       	\begin{equation} \label{P:ConC2:1}
        		\partial F\left(\bx^{k}\right) = \left(\partial f_{1}\left(x_{1}^{k}\right) + \nabla_{x_{1}} 
        		H\left(x_{1}^{k} , x_{2}^{k}\right) , \partial f_{2}\left(x_{2}^{k}\right) + \nabla_{x_{2}} 
        		H\left(x_{1}^{k} , x_{2}^{k}\right)\right).
       	\end{equation}
       	From the definition of the proximal mapping (see \eqref{D:ProximalMap}) and the iterative step 
       	\eqref{iPALM:StepX1:3} we have
        \begin{equation*}
            x_{1}^{k} \in \argmin_{x_{1} \in \rr^{n_{1}}} \left\{ \act{x_{1} - y_{1}^{k - 1} , 
            \nabla_{x_{1}} H\left(z_{1}^{k - 1} , x_{2}^{k - 1}\right)} + \frac{\tau_{1}^{k - 1}}{2}
            \norm{x_{1} - y_{1}^{k - 1}}^{2} + f_{1}\left(x_{1}\right) \right\}.
        \end{equation*}
        Writing down the optimality condition yields
        \begin{equation*}
            \nabla_{x_{1}} H\left(z_{1}^{k - 1} , x_{2}^{k - 1}\right) + \tau_{1}^{k - 1}\left(x_{1}^{k} 
            - y_{1}^{k - 1}\right) + \xi_{1}^{k}  = 0,
        \end{equation*}
        where $\xi_{1}^{k} \in \partial f_{1}\left(x_{1}^{k}\right)$. Hence
        \begin{equation*}
            \nabla_{x_{1}} H\left(z_{1}^{k - 1} , x_{2}^{k - 1}\right) + \xi_{1}^{k} = \tau_{1}^{k - 1}
            \left(y_{1}^{k - 1} - x_{1}^{k}\right) = \tau_{1}^{k - 1}\left(x_{1}^{k - 1} - x_{1}^{k} + 
            \alpha_{1}^{k - 1}\left(x_{1}^{k - 1} - x_{1}^{k - 2}\right)\right),
        \end{equation*}
        where the last equality follows from step \eqref{iPALM:StepX1:1}. By defining
        \begin{equation} \label{Item4-1}
            v_{1}^{k} := \nabla_{x_{1}} H\left(x_{1}^{k} , x_{2}^{k}\right) - \nabla_{x_{1}} 
            H\left(z_{1}^{k - 1} , x_{2}^{k - 1}\right) + \tau_{1}^{k - 1}\left(x_{1}^{k - 1} - x_{1}
            ^{k} + \alpha_{1}^{k - 1}\left(x_{1}^{k - 1} - x_{1}^{k - 2}\right)\right), 
        \end{equation}
        we obtain from \eqref{P:ConC2:1} that $v_{1}^{k} \in \partial_{x_{1}} F\left(\bx^{k}\right)$. 
        Similarly, from the iterative step \eqref{iPALM:StepX2:3}, by defining
        \begin{equation} \label{Item4-2}
            v_{2}^{k} := \nabla_{x_{2}} H\left(x_{1}^{k} , x_{2}^{k}\right) - \nabla_{x_{2}} 
            H\left(x_{1}^{k} , z_{2}^{k - 1}\right) + \tau_{2}^{k - 1}\left(x_{2}^{k - 1} - x_{2}^{k} + 
            \alpha_{2}^{k - 1}\left(x_{2}^{k - 1} - x_{2}^{k - 2}\right)\right), 
        \end{equation}
        we have that $v_{2}^{k} \in \partial_{x_{2}} F\left(\bx^{k}\right)$.
\medskip

		Thus, for 
		\begin{equation*}
			\bw^{k} := \left(v_{1}^{k} + \delta_{1}\left(x_{1}^{k} - x_{1}^{k - 1}\right) , v_{2}^{k} + 
			\delta_{2}\left(x_{2}^{k} - x_{2}^{k - 1}\right)\right) , \delta_{1}\left(x_{1}^{k} - x_{1}
			^{k - 1} , \delta_{2}\left(x_{2}^{k} - x_{2}^{k - 1}\right)\right),
		\end{equation*}
		we obtain that
       	\begin{equation} \label{P:ConC2:0}
        		\norm{\bw^{k}} \leq \norm{v_{1}^{k}} + \norm{v_{2}^{k}} + 2\delta_{1}\norm{x_{1}^{k} - x_{1}
        		^{k - 1}} + 2\delta_{2}\norm{x_{2}^{k} - x_{2}^{k - 1}}.
       	\end{equation}
       	This means that we have to bound from above the norms of $v_{1}^{k}$ and $v_{2}^{k}$. Since 
       	$\nabla H$ is Lipschitz continuous on bounded subsets of $\rr^{n_{1}} \times \rr^{n_{2}}$ (see 
       	Assumption \ref{AssumptionsA}(v)) and since we assumed that $\Seq{\bx}{k}$ is bounded, there 
       	exists $M > 0$ such that
        \begin{align*}
            \norm{v_{1}^{k}} & \leq \tau_{1}^{k - 1}\norm{x_{1}^{k - 1} - x_{1}^{k} + \alpha_{1}^{k - 1}
            \left(x_{1}^{k - 1} - x_{1}^{k - 2}\right)} + \norm{\nabla_{x_{1}} H\left(x_{1}^{k} , x_{2}
            ^{k}\right) - \nabla_{x_{1}} H\left(z_{1}^{k - 1} , x_{2}^{k - 1}\right)} \\
            & \leq \tau_{1}^{k - 1}\norm{x_{1}^{k - 1} - x_{1}^{k}} + \tau_{1}^{k - 1}\alpha_{1}^{k - 1}
            \norm{x_{1}^{k - 1} - x_{1}^{k - 2}} + M\norm{\bx^{k} - \left(z_{1}^{k - 1} , x_{2}^{k - 1}
            \right)} \\
            & \leq \tau_{1}^{+}\left(\norm{x_{1}^{k - 1} - x_{1}^{k}} + \norm{x_{1}^{k - 1} - x_{1}^{k - 
            2}}\right) + M\norm{\left(x_{1}^{k} - x_{1}^{k - 1} - \beta_{1}^{k - 1}\left(x_{1}^{k - 1} - 
            x_{1}^{k - 2}\right) , x_{2}^{k} - x_{2}^{k - 1}\right)}, \\
            & = \tau_{1}^{+}\left(\norm{x_{1}^{k - 1} - x_{1}^{k}} + \norm{x_{1}^{k - 1} - x_{1}^{k - 
            2}}\right) + M\norm{\left(\bx^{k} - \bx^{k - 1}\right) - \beta_{1}^{k - 1}\left(x_{1}^{k - 
            1} - x_{1}^{k - 2} , \bo\right)} \\
           	& \leq \left(\tau_{1}^{+} + M\right)\left(\norm{\bx^{k} - \bx^{k - 1}} + \norm{\bx^{k - 1} - 
           	\bx^{k - 2}}\right),
        \end{align*}
        where the third inequality follows from \eqref{iPALM:StepX1:2}, the fact the sequence $\left\{ 
        \tau_{1}^{k} \right\}_{k \in \nn}$ is bounded from above by $\tau_{1}^{+}$ (see Remark 
        \ref{R:BoundedTau}) and $\alpha_{1}^{k} , \beta_{1}^{k} \leq 1$ for all $k \in \nn$. On the 
        other hand, from the Lipschitz continuity of $\nabla_{x_{2}} H\left(x_{1} , \cdot\right)$ (see 
        Assumption \ref{AssumptionsA}(iii)), we have that
        \begin{align*}
            \norm{v_{2}^{k}} & \leq \tau_{2}^{k - 1}\norm{x_{2}^{k - 1} - x_{2}^{k} + \alpha_{2}^{k - 1}
            \left(x_{2}^{k - 1} - x_{2}^{k - 2}\right)} + \norm{\nabla_{x_{2}} H\left(x_{1}^{k} , x_{2}
            ^{k}\right) - \nabla_{x_{2}} H\left(x_{1}^{k} , z_{2}^{k - 1}\right)} \\
            & \leq \tau_{2}^{k - 1}\norm{x_{2}^{k - 1} - x_{2}^{k}} + \tau_{2}^{k - 1}\alpha_{2}^{k - 1}
            \norm{x_{2}^{k - 1} - x_{2}^{k - 2}} + L_{1}(x_{1}^{k})\norm{x_{2}^{k} - z_{2}^{k - 1}} \\
            & \leq \tau_{2}^{+}\left(\norm{x_{2}^{k - 1} - x_{2}^{k}} + \norm{x_{2}^{k - 1} - x_{2}^{k - 
            2}}\right) + \lambda_{2}^{+}\norm{x_{2}^{k} - x_{2}^{k - 1} - \beta_{2}^{k - 1}\left(x_{2}
            ^{k - 1} - x_{2}^{k - 2}\right)} \\
            & \leq \left(\tau_{2}^{+} + \lambda_{2}^{+}\right)\left(\norm{x_{2}^{k - 1} - x_{2}^{k}} + 
            \norm{x_{2}^{k - 1} - x_{2}^{k - 2}}\right) \\
           	& \leq \left(\tau_{2}^{+} + \lambda_{2}^{+}\right)\left(\norm{\bx^{k} - \bx^{k - 1}} + 
           	\norm{\bx^{k - 1} - \bx^{k - 2}}\right),
        \end{align*}
        where the third and fourth inequalities follow from \eqref{iPALM:StepX2:2}, the fact the 
        sequence $\left\{ \tau_{2}^{k} \right\}_{k \in \nn}$ is bounded from above by $\tau_{2}^{+}$ 
        (see Remark \ref{R:BoundedTau}) and $\beta_{2}^{k} \leq 1$ for all $k \in \nn$. Summing up these 
        estimations, we get from \eqref{P:ConC2:0} that
		\begin{align*}
        		\norm{\bw^{k}} & \leq \norm{v_{1}^{k}} + \norm{v_{2}^{k}} + 2\delta_{1}\norm{x_{1}^{k} - 
        		x_{1}^{k - 1}} + 2\delta_{2}\norm{x_{2}^{k} - x_{2}^{k - 1}} \\
        		& \leq \norm{v_{1}^{k}} + \norm{v_{2}^{k}} + 2\left(\delta_{1} + \delta_{2}\right)
        		\norm{\bx^{k} - \bx^{k - 1}} \\
        		& \leq \left(\tau_{1}^{+} + M + \tau_{2}^{+} + \lambda_{2}^{+}\right)\left(\norm{\bx^{k} - 
        		\bx^{k - 1}} + \norm{\bx^{k - 1} - \bx^{k - 2}}\right) + 2\left(\delta_{1} + \delta_{2}
        		\right)\norm{\bu^{k} - \bu^{k - 1}} \\        		
        		& \leq \left(\sqrt{2}\left(\tau_{1}^{+} + M + \tau_{2}^{+} + \lambda_{2}^{+}\right) + 
        		2\left(\delta_{1} + \delta_{2}\right)\right)\norm{\bu^{k} - \bu^{k - 1}}, 		
       	\end{align*}        
       	where the second inequality follows from the fact that $\norm{x_{i}^{k} - x_{i}^{k - 1}} \leq 
       	\norm{\bx^{k} - \bx^{k - 1}}$ for $i = 1 , 2$, the third inequality follows from the fact that 
       	$\norm{\bx^{k} - \bx^{k - 1}} \leq \norm{\bu^{k} - \bu^{k - 1}}$ and the last inequality follows 
       	from the fact that $\norm{\bx^{k} - \bx^{k - 1}} + \norm{\bx^{k - 1} - \bx^{k - 2}} \leq 
       	\sqrt{2}\norm{\bu^{k} - \bu^{k - 1}}$. This completes the proof with $\rho_{2} = \sqrt{2}
       	\left(\tau_{1}^{+} + M + \tau_{2}^{+} + \lambda_{2}^{+}\right) + 2\left(\delta_{1} + \delta_{2}
       	\right)$.
	\end{proof}	
	So far we have proved that the sequence $\Seq{\bu}{k}$ and the function $\Psi$ (see \eqref{D:Psi}) 
	satisfy conditions (C1) and (C2) of Theorem \ref{T:GeneralConvergence}. Now, in order to get that 
	$\Seq{\bu}{k}$ converges to a critical point of $\Psi$, it remains to prove that condition (C3) 
	holds true.
	\begin{proposition} \label{P:ConC3}
		Let $\Seq{\bx}{k}$ be a sequence generated by iPALM which is assumed to be bounded. Suppose that 
		Assumptions \ref{AssumptionsA} and \ref{AssumptionB} hold true. Assume that $\bu^{k} = 
		\left(\bx^{k} , \bx^{k - 1}\right)$, $k \in \nn$. Then, each limit point in the set $\omega
		\left(\bu^{0}\right)$ is a critical point of $\Psi$.
	\end{proposition}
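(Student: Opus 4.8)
The plan is to verify condition (C3) by combining the two estimates already established---the descent property of Proposition \ref{P:C1} and the subgradient bound of Proposition \ref{P:ConC2}---with the closedness of the limiting subdifferential recorded in Remark \ref{R:SubdiffClosed}. First I would extract the standard consequences of (C1). Since $F$ is bounded below by Assumption \ref{AssumptionsA}(ii) and the two quadratic terms in the definition \eqref{D:Psi} of $\Psi$ are nonnegative, the sequence $\left\{ \Psi\left(\bu^{k}\right) \right\}_{k \in \nn}$ is bounded below; being nonincreasing by Proposition \ref{P:C1}, it converges. Summing the inequality of Proposition \ref{P:C1} over $k$ then yields $\sum_{k} \norm{\bu^{k + 1} - \bu^{k}}^{2} < \infty$, so that $\norm{\bu^{k + 1} - \bu^{k}} \to 0$ and, in particular, $\norm{\bx^{k + 1} - \bx^{k}} \to 0$ and $\Delta_{i}^{k} \to 0$ for $i = 1 , 2$. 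Now fix a limit point $\overline{\bu} \in \omega\left(\bu^{0}\right)$ and a subsequence with $\bu^{k_{l}} \to \overline{\bu}$. Because consecutive iterates become arbitrarily close and $\bu^{k} = \left(\bx^{k} , \bx^{k - 1}\right)$, the limit must have the diagonal form $\overline{\bu} = \left(\bx^{\ast} , \bx^{\ast}\right)$ with $\bx^{k_{l}} \to \bx^{\ast}$ and $\bx^{k_{l} - 1} \to \bx^{\ast}$.

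The key step is to show that the function values converge along the subsequence, that is, $\Psi\left(\bu^{k_{l}}\right) \to \Psi\left(\overline{\bu}\right)$; this is precisely the hypothesis needed to invoke Remark \ref{R:SubdiffClosed}, and it is the point where lower semicontinuity of $f_{1} , f_{2}$ alone is insufficient. I expect this to be the main obstacle. Lower semicontinuity gives the easy direction $\liminf_{l} f_{i}\left(x_{i}^{k_{l}}\right) \geq f_{i}\left(x_{i}^{\ast}\right)$. For the matching upper bound I would exploit the optimality of the proximal steps: writing the prox step \eqref{iPALM:StepX1:3} at index $k_{l} - 1$ as an $\argmin$ and testing it against $x_{1}^{\ast}$ yields
\begin{equation*}
	f_{1}\left(x_{1}^{k_{l}}\right) \leq f_{1}\left(x_{1}^{\ast}\right) + \act{x_{1}^{\ast} - x_{1}^{k_{l}} , \nabla_{x_{1}} H\left(z_{1}^{k_{l} - 1} , x_{2}^{k_{l} - 1}\right)} + \frac{\tau_{1}^{k_{l} - 1}}{2}\norm{x_{1}^{\ast} - y_{1}^{k_{l} - 1}}^{2} - \frac{\tau_{1}^{k_{l} - 1}}{2}\norm{x_{1}^{k_{l}} - y_{1}^{k_{l} - 1}}^{2},
\end{equation*}
and analogously for $f_{2}$ from \eqref{iPALM:StepX2:3}. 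Since the sequence is bounded, $\nabla H$ is bounded along it by Assumption \ref{AssumptionsA}(v); since each $\tau_{i}^{k}$ is bounded above by $\tau_{i}^{+}$ (Remark \ref{R:BoundedTau}); and since $y_{1}^{k_{l} - 1} \to x_{1}^{\ast}$ because $\Delta_{i}^{k} \to 0$, every term on the right-hand side converges to its value with $x_{1}^{k_{l}}$ replaced by $x_{1}^{\ast}$, giving $\limsup_{l} f_{1}\left(x_{1}^{k_{l}}\right) \leq f_{1}\left(x_{1}^{\ast}\right)$. Combined with lower semicontinuity this forces $f_{1}\left(x_{1}^{k_{l}}\right) \to f_{1}\left(x_{1}^{\ast}\right)$, and likewise $f_{2}\left(x_{2}^{k_{l}}\right) \to f_{2}\left(x_{2}^{\ast}\right)$.

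Since $H$ is continuous and the two quadratic terms of $\Psi$ vanish in the limit (again because $\Delta_{i}^{k} \to 0$), it follows that $\Psi\left(\bu^{k_{l}}\right) \to F\left(\bx^{\ast}\right) = \Psi\left(\overline{\bu}\right)$. Finally I would close the argument using the subgradient estimate. By Proposition \ref{P:ConC2} there is $\bw^{k} \in \partial \Psi\left(\bu^{k}\right)$ with $\norm{\bw^{k}} \leq \rho_{2}\norm{\bu^{k} - \bu^{k - 1}} \to 0$, so $\bw^{k_{l}} \to \bo$. Thus $\left\{ \left(\bu^{k_{l}} , \bw^{k_{l}}\right) \right\}$ is a sequence in $\gr{\left(\partial \Psi\right)}$ converging to $\left(\overline{\bu} , \bo\right)$ with $\Psi\left(\bu^{k_{l}}\right) \to \Psi\left(\overline{\bu}\right)$, and Remark \ref{R:SubdiffClosed} yields $\bo \in \partial \Psi\left(\overline{\bu}\right)$, that is, $\overline{\bu} \in \crit{\Psi}$. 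As $\overline{\bu} \in \omega\left(\bu^{0}\right)$ was arbitrary, this establishes $\omega\left(\bu^{0}\right) \subset \crit{\Psi}$, which is condition (C3).
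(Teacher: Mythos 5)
Your proposal is correct and follows essentially the same route as the paper's own proof: derive square-summability of $\norm{\bu^{k+1} - \bu^{k}}$ from Proposition \ref{P:C1}, obtain the limsup bound on $f_{i}$ by testing the prox-step optimality against the limit point, combine with lower semicontinuity to get convergence of $\Psi$-values, and conclude via Proposition \ref{P:ConC2} and the closedness of $\partial \Psi$ (Remark \ref{R:SubdiffClosed}). The only (welcome) refinement is that you make explicit the diagonal form $\overline{\bu} = \left(\bx^{\ast} , \bx^{\ast}\right)$ of the limit point and write the prox-optimality inequality with $y_{1}^{k_{l}-1}$ in both quadratic terms, consistent with \eqref{iPALM:StepX1:3}, where the paper's display contains a small typo.
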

	\begin{proof}
		Since $\Seq{\bu}{k}$ is assumed to be bounded, the set $\omega\left(\bu^{0}\right)$ is nonempty. 
		Thus there exists $\bu^{\ast} = \left(x_{1}^{\ast} , x_{2}^{\ast} , {\hat x_{1}} , {\hat x_{2}}
		\right)$ which is a limit point of $\left\{ \bu^{k_{l}} \right\}_{l\in \nn}$, which is a 
		subsequence of $\Seq{\bu}{k}$. We will prove that $\bu^{\ast}$ is a critical point of $\Psi$ 
		(see \eqref{D:Psi}). From condition (C2), for some $\bw^{k} \in \partial \Psi\left(\bu^{k}
		\right)$, we have that 
       	\begin{equation*}
        		\norm{\bw^{k}} \leq \rho_{2}\norm{\bu^{k} - \bu^{k - 1}}.
        \end{equation*}
        From Proposition \ref{P:C1}, it follow that for any $N \in \nn$, we have
        \begin{equation} \label{P:ConC3:1}
        		\rho_{1}\sum_{k = 0}^{N} \norm{\bu^{k + 1} - \bu^{k}}^{2} \leq \Psi\left(\bu^{0}\right) - 
        		\Psi\left(\bu^{N + 1}\right).
        \end{equation}
        Since $F$ is bounded from below (see Assumption \ref{AssumptionsA}(ii)) and the fact that $\Psi
        \left(\cdot\right) \geq F\left(\cdot\right)$ we obtain that $\Psi$ is also bounded from below. 
        Thus, letting $N \rightarrow \infty$ in \eqref{P:ConC3:1} yields that 
        \begin{equation} \label{P:ConC3:2}
        		\sum_{k = 0}^{\infty} \norm{\bu^{k + 1} - \bu^{k}}^{2} < \infty,
        \end{equation}
        which means that
        \begin{equation} \label{P:ConC3:3}
        		\lim_{k \rightarrow \infty} \norm{\bu^{k + 1} - \bu^{k}} = 0.
        \end{equation}
		This fact together with condition (C1) implies that $\norm{\bw^{k}} \rightarrow 0$ as $k 
		\rightarrow \infty$. Thus, in order to use the closedness property of $\partial \Psi$ (see 
		Remark \ref{R:SubdiffClosed}) we remain to show that $\left\{ \Psi\left(\bu^{k}\right) \right\}
		_{k \in \nn}$ converges to $\Psi\left(\bu^{\ast}\right)$. Since $f_{1}$ and $f_{2}$ are lower 
		semicontinuous (see Assumption \ref{AssumptionsA}(i)), we obtain that
    		\begin{equation} \label{L:Critical:2}
    			\limitinf{k}{\infty} f_{1}\left(x_{1}^{k}\right) \geq f_{1}\left(x_{1}^{\ast}\right) \quad 
    			\text{and} \quad \limitinf{k}{\infty} f_{2}\left(x_{2}^{k}\right) \geq f_{2}\left(x_{2}
    			^{\ast}\right).
	    \end{equation}
    	 	From the iterative step \eqref{iPALM:StepX1:3}, we have, for all integer $k$, that
     	\begin{equation*}
     		x_{1}^{k + 1} \in \argmin_{x_{1} \in \rr^{n_{1}}} \left\{ \act{x_{1} - y_{1}^{k} ,
     		\nabla_{x_{1}} H\left(z_{1}^{k} , x_{2}^{k}\right)} + \frac{\tau_{1}^{k}}{2}\norm{x_{1} - 
     		x_{1}^{k}}^{2} + f_{1}\left(x_{1}\right) \right\}.
	    \end{equation*}
     	Thus letting $x_{1} = x_{1}^{\ast}$ in the above, we get
     	\begin{align*}
     		\act{x_{1}^{k + 1} - y_{1}^{k} , \nabla_{x_{1}} H\left(z_{1}^{k} , x_{2}^{k}\right)} + 
     		\frac{\tau_{1}^{k}}{2}\norm{x_{1}^{k + 1} - x_{1}^{k}}^{2} + f_{1}\left(x_{1}^{k + 1}\right) 
     		\\
         	& \hspace{-2.3in} \leq \act{x_{1}^{\ast} - y_{1}^{k} , \nabla_{x_{1}} H\left(z_{1}^{k} , 
         	x_{2}^{k}\right)} + \frac{\tau_{1}^{k}}{2}\norm{x_{1}^{\ast} - x_{1}^{k}}^{2} + f_{1}
         	\left(x_{1}^{\ast}\right).
     	\end{align*}
     	Choosing $k = k_{l} - 1$ and letting $k$ goes to infinity, we obtain
     	\begin{align}
     		\limsup_{l \rightarrow \infty} f_{1}\left(x_{1}^{k_{l}}\right) & \leq \limsup_{l 
     		\rightarrow \infty} \left(\act{x_{1}^{\ast} - x_{1}^{k_{l}} , \nabla_{x_{1}} H\left(z_{1}
     		^{k_{l} - 1} , x_{2}^{k_{l} - 1}\right)} + \frac{\tau_{1}^{k_{l} - 1}}{2}\norm{x_{1}^{\ast} 
     		- x_{1}^{k_{l} - 1}}^{2}\right) \nonumber \\
     		& + f_{1}\left(x_{1}^{\ast}\right), \label{L:Critical:3}
		\end{align}
        where we have used the facts that both sequences $\Seq{\bx}{k}$ (and therefore $\Seq{\bz}{k}$) 
        and $\left\{ \tau_{1}^{k} \right\}_{k \in \nn}$ (see Remark \ref{R:BoundedTau}) are bounded, 
        $\nabla H$ continuous and that the distance between two successive iterates tends to zero (see 
        \eqref{P:ConC3:3}). For that very reason we also have that $x_{1}^{k_{l}} \rightarrow x_{1}
        ^{\ast}$ as $l \rightarrow \infty$, hence \eqref{L:Critical:3} reduces to $\limsup_{l 
        \rightarrow \infty} f_{1}\left(x_{1}^{k_{l}}\right) \leq f_{1}\left(x_{1}^{\ast}\right)$. Thus, 
        in view of \eqref{L:Critical:2}, $f_{1}\left(x_{1}^{k_{l}}\right)$ tends to $f_{1}\left(x_{1}
        ^{\ast}\right)$ as $k \rightarrow \infty$. Arguing similarly with $f_{2}$ and $x_{2}^{k + 1}$ we 
        thus finally obtain from \eqref{P:ConC3:3} that
        \begin{align}
        		\lim_{l \rightarrow \infty} \Psi\left(\bu^{k_{l}}\right) & = \lim_{l \rightarrow \infty} 
        		\left\{ f_{1}\left(x_{1}^{k_{l}}\right) + f_{2}\left(x_{2}^{k_{l}}\right) + H
        		\left(\bx^{k_{l}}\right) + \frac{\delta_{1}}{2}\norm{x_{1}^{k_{l}} - x_{1}^{k_{l} - 1}}^{2} 
        		+ \frac{\delta_{2}}{2}\norm{x_{2}^{k_{l}} - x_{2}^{k_{l} - 1}}^{2} \right\} \nonumber \\
            & = f_{1}\left(x_{1}^{\ast}\right) + f_{2}\left(x_{2}^{\ast}\right) + H\left(x_{1}^{\ast} , 
            x_{2}^{\ast}\right) \nonumber \\
            & = F\left(x_{1}^{\ast} , x_{2}^{\ast}\right) \label{L:Critical:4} \\
            & = \Psi\left(\bu^{\ast}\right). \nonumber
        \end{align}
		Now, the closedness property of $\partial \Psi$ (see Remark \ref{R:SubdiffClosed}) implies that 
		$\bo \in \partial \Psi\left(\bu^{\ast}\right)$, which proves that $\bu^{\ast}$ is a critical 
		point of $\Psi$. This proves condition (C3).
	\end{proof}
	Now using the convergence proof methodology of \cite{BST2014} which is summarized in Theorem 
	\ref{T:GeneralConvergence} we can obtain the following result.
	\begin{corollary} \label{C:PsiConv}
		Let $\Seq{\bx}{k}$ be a sequence generated by iPALM which is assumed to be bounded. Suppose that 
		Assumptions \ref{AssumptionsA} and \ref{AssumptionB} hold true. Assume that $\bu^{k} = 
		\left(\bx^{k} , \bx^{k - 1}\right)$, $k \in \nn$. If $F$ is a KL function, then the sequence 
		$\Seq{\bu}{k}$ converges to a critical point $\bu^{\ast}$ of $\Psi$.
	\end{corollary}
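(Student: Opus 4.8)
The plan is to verify, one by one, the hypotheses needed to run the abstract convergence methodology summarized in Theorem \ref{T:GeneralConvergence} for the auxiliary function $\Psi$ of \eqref{D:Psi} together with the lifted sequence $\Seq{\bu}{k}$, $\bu^{k} = \left(\bx^{k} , \bx^{k - 1}\right)$, and then to read off the conclusion. The elementary ingredients are quick. By Assumption \ref{AssumptionsA}(i)--(ii) the function $F$ is proper, lower semicontinuous and bounded below; since the two quadratic terms in \eqref{D:Psi} are continuous and nonnegative, $\Psi$ is again proper and lower semicontinuous and satisfies $\Psi\left(\bu\right) \geq F\left(u_{1}\right) \geq \inf F > -\infty$. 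The sequence $\Seq{\bu}{k}$ inherits boundedness from the assumed boundedness of $\Seq{\bx}{k}$. The three core requirements of the methodology are precisely the propositions already proved: condition (C1) is Proposition \ref{P:C1}, condition (C2) is Proposition \ref{P:ConC2}, and condition (C3) is Proposition \ref{P:ConC3}, each established under Assumptions \ref{AssumptionsA} and \ref{AssumptionB}.

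The remaining, and genuinely delicate, point is that $\Psi$ must be a KL function. Writing $\Psi\left(\bu\right) = F\left(u_{1}\right) + q\left(\bu\right)$ with $q\left(\bu\right) = \tfrac{\delta_{1}}{2}\norm{u_{11} - u_{21}}^{2} + \tfrac{\delta_{2}}{2}\norm{u_{12} - u_{22}}^{2}$ a nonnegative quadratic, in the semi-algebraic setting stressed throughout the paper this is immediate: $q$ is semi-algebraic, and $\Psi$ is the sum of $q$ and $F$ viewed as a function of $\bu$ through its $u_{1}$-coordinates, hence semi-algebraic and therefore KL (see the references collected in Section \ref{Sec:Preli}). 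For the stated hypothesis that $F$ is merely KL I would transfer the property by hand. Using the explicit description of $\partial \Psi$ obtained in the proof of Proposition \ref{P:ConC2}, the two blocks of any subgradient of $\Psi$ along the $u_{2}$-variables are $\delta_{1}\left(u_{21} - u_{11}\right)$ and $\delta_{2}\left(u_{22} - u_{12}\right)$, so that $\dist\left(0 , \partial \Psi\left(\bu\right)\right)$ bounds $\norm{u_{11} - u_{21}}$ and $\norm{u_{12} - u_{22}}$ from above and, in combination with the $u_{1}$-blocks, also bounds $\dist\left(0 , \partial F\left(u_{1}\right)\right)$ from above; a reverse inequality holds as well. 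This yields a two-sided comparison $\dist\left(0 , \partial \Psi\left(\bu\right)\right) \asymp \dist\left(0 , \partial F\left(u_{1}\right)\right) + \norm{u_{11} - u_{21}} + \norm{u_{12} - u_{22}}$ valid near any critical point.

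Since every critical point of $\Psi$ has the form $\bu^{\ast} = \left(x^{\ast} , x^{\ast}\right)$ with $x^{\ast} = \left(x_{1}^{\ast} , x_{2}^{\ast}\right) \in \crit F$ (the $u_{2}$-blocks of the inclusion $\bo \in \partial \Psi\left(\bu^{\ast}\right)$ force $u_{21} = u_{11}$ and $u_{22} = u_{12}$, whence $\bo \in \partial F\left(u_{1}\right)$), the value gap splits as $\Psi\left(\bu\right) - \Psi\left(\bu^{\ast}\right) = \left(F\left(u_{1}\right) - F\left(x^{\ast}\right)\right) + q\left(\bu\right)$. The distance comparison above, fed into this splitting, lets one manufacture a desingularizing function for $\Psi$ out of the one furnished by the KL property of $F$; I expect this KL-transfer step to be the main obstacle, as it is the only place where the mere KL assumption, rather than semi-algebraicity, must be exploited with care. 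Once $\Psi$ is known to be proper, lower semicontinuous, bounded below and KL, with $\Seq{\bu}{k}$ bounded and conditions (C1)--(C3) in force, the convergence methodology of Theorem \ref{T:GeneralConvergence} applies and yields that $\Seq{\bu}{k}$ converges to a single critical point $\bu^{\ast}$ of $\Psi$, which is the claim.
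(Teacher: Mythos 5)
Your proposal is correct and its skeleton is exactly the paper's proof: conditions (C1), (C2) and (C3) for the pair $\left(\Seq{\bu}{k} , \Psi\right)$ are supplied by Propositions \ref{P:C1}, \ref{P:ConC2} and \ref{P:ConC3}, and Theorem \ref{T:GeneralConvergence} then yields convergence of $\Seq{\bu}{k}$ to a critical point of $\Psi$. The only point where you diverge is the KL hypothesis on $\Psi$: the paper disposes of it in one sentence (``if $F$ is a KL function then obviously $\Psi$ is a KL function since we just add two quadratic functions''), whereas you flag it as the delicate step, settle the semi-algebraic case immediately, and sketch a transfer argument for the general KL case. Your caution is justified --- adding a smooth term to a KL function does not preserve the KL property in general, so the paper's ``obviously'' is an assertion rather than a proof --- and your sketch is the right one and can be completed: the invertible linear change of variables $\left(u_{1} , u_{2}\right) \mapsto \left(u_{1} , u_{1} - u_{2}\right)$ preserves the KL property and turns $\Psi$ into a separable sum of $F$ and a nonnegative quadratic; your two-sided comparison of subdifferential distances and your identification of $\crit{\Psi}$ with the diagonal copy of $\crit{F}$ are both correct; and a desingularizing function of the form $\varphi_{\Psi}\left(s\right) = 2C\varphi_{F}\left(s/2\right) + c\sqrt{s}$ then handles the two regimes (quadratic gap dominant, respectively $F$-gap dominant), provided $\varphi_{F}$ is concave so that $\varphi_{F}'$ is non-increasing. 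That concavity is part of the standard definition of a desingularizing function (as in \cite{BST2014}), although the class $\Phi_{\eta}$ defined in this paper omits it; similarly, Theorem \ref{T:GeneralConvergence} is literally stated for semi-algebraic $\Psi$, so your application of it under a bare KL hypothesis implicitly invokes the KL version of that result from \cite{BST2014} --- which is also what the paper itself does. In short, there is no gap in your argument relative to the paper; on the one genuinely nontrivial point you are more careful than the source.
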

	\begin{proof}
		The proof follows immediately from Theorem \ref{T:GeneralConvergence} since Proposition 
		\ref{P:C1} proves that condition (C1) holds true, Proposition \ref{P:ConC2} proves that 
		condition (C2) holds true, and condition (C3) was proved in Proposition \ref{P:ConC3}. It is 
		also clear that if $F$ is a KL function then obviously $\Psi$ is a KL function since we just add 
		two quadratic functions.
	\end{proof}
	To conclude the convergence theory of iPALM we have to show that the sequence $\Seq{\bx}{k}$ which 
	is generated by iPALM converges to a critical point of $F$ (see \eqref{Model}).
	\begin{theorem}[Convergence of iPALM]
		Let $\Seq{\bx}{k}$ be a sequence generated by iPALM which is assumed to be bounded. Suppose that 
		Assumptions \ref{AssumptionsA} and \ref{AssumptionB} hold true. If $F$ is a KL function, then 
		the sequence $\Seq{\bx}{k}$ converges to a critical point $\bx^{\ast}$ of $F$.
	\end{theorem}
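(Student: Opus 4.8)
The plan is to leverage Corollary~\ref{C:PsiConv}, which already establishes that the auxiliary sequence $\Seq{\bu}{k}$, with $\bu^{k} = \left(\bx^{k} , \bx^{k - 1}\right)$, converges to a critical point $\bu^{\ast}$ of the auxiliary function $\Psi$. The task therefore reduces to transferring this conclusion from $\bu^{k}$ and $\Psi$ back to the original sequence $\bx^{k}$ and the original function $F$.

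First I would invoke Corollary~\ref{C:PsiConv} to obtain $\bu^{k} \rightarrow \bu^{\ast}$ with $\bo \in \partial \Psi\left(\bu^{\ast}\right)$. Writing $\bu^{\ast} = \left(x_{1}^{\ast} , x_{2}^{\ast} , {\hat x_{1}} , {\hat x_{2}}\right)$, the convergence of $\bu^{k} = \left(\bx^{k} , \bx^{k - 1}\right)$ forces both its first block $\bx^{k}$ and its shifted second block $\bx^{k - 1}$ to converge to the same point; equivalently, \eqref{P:ConC3:3} gives $\norm{\bx^{k} - \bx^{k - 1}} \rightarrow 0$. Hence ${\hat x_{1}} = x_{1}^{\ast}$ and ${\hat x_{2}} = x_{2}^{\ast}$, so that $\bu^{\ast} = \left(\bx^{\ast} , \bx^{\ast}\right)$ with $\bx^{\ast} = \left(x_{1}^{\ast} , x_{2}^{\ast}\right)$, and in particular $\bx^{k} \rightarrow \bx^{\ast}$.

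It then remains to upgrade the criticality of $\Psi$ at $\bu^{\ast}$ to criticality of $F$ at $\bx^{\ast}$. Here I would use the explicit form of $\partial \Psi$ recorded in the proof of Proposition~\ref{P:ConC2}: at a general point the two coupling terms contribute $\delta_{1}\left(u_{11} - u_{21}\right)$ and $\delta_{2}\left(u_{12} - u_{22}\right)$ to the first two blocks of the subgradient. Evaluated at $\bu^{\ast} = \left(\bx^{\ast} , \bx^{\ast}\right)$ these differences vanish, so $\partial \Psi\left(\bu^{\ast}\right)$ collapses to $\left(\partial_{x_{1}} F\left(\bx^{\ast}\right) , \partial_{x_{2}} F\left(\bx^{\ast}\right) , \bo , \bo\right)$. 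Consequently $\bo \in \partial \Psi\left(\bu^{\ast}\right)$ is equivalent to $\bo \in \partial_{x_{1}} F\left(\bx^{\ast}\right)$ together with $\bo \in \partial_{x_{2}} F\left(\bx^{\ast}\right)$, which by the block decomposition \eqref{P:ConC2:1} of $\partial F$ yields $\bo \in \partial F\left(\bx^{\ast}\right)$.

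The argument is short precisely because all of the analytic work---the descent estimate (C1), the subgradient bound (C2), the limit-point characterization (C3), and the appeal to the KL property---has already been absorbed into Corollary~\ref{C:PsiConv}. The only substantive point, and the one I expect to require the most care, is the observation that the penalty terms of $\Psi$ disappear from the subdifferential exactly at limit points where the two copies of the variable coincide; this is what makes criticality for the lifted function $\Psi$ genuinely equivalent to criticality for $F$, and it hinges on having first established $\norm{\bx^{k} - \bx^{k - 1}} \rightarrow 0$.
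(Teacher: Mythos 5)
Your proposal is correct and follows essentially the same route as the paper's own proof: invoke Corollary~\ref{C:PsiConv}, use $\norm{\bx^{k} - \bx^{k-1}} \rightarrow 0$ to conclude $\bu^{\ast} = \left(\bx^{\ast} , \bx^{\ast}\right)$, and then observe that the coupling terms in $\partial \Psi\left(\bu^{\ast}\right)$ vanish so that $\bo \in \partial \Psi\left(\bu^{\ast}\right)$ yields $\bo \in \partial F\left(\bx^{\ast}\right)$. If anything, your write-up is slightly more explicit than the paper's at the final step, where the paper's displayed formula for $\partial \Psi\left(\bu^{\ast}\right)$ contains a minor index typo that your version avoids.
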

	\begin{proof}
		From Corollary \ref{C:PsiConv} we have that the sequence $\Seq{\bu}{k}$ converges to a critical 
		point $\bu^{\ast} = \left(u_{11}^{\ast} , u_{12}^{\ast} , u_{21}^{\ast} , u_{22}^{\ast}\right)$ 
		of $\Psi$. Therefore, obviously also the sequence $\Seq{\bx}{k}$ converges. Let $\bx^{\ast}$ be 
		the limit point of $\Seq{\bx}{k}$. Thence $u_{1}^{\ast} = \left(u_{11}^{\ast} , u_{12}^{\ast}
		\right) = \bx^{\ast}$ and $u_{2}^{\ast} = \left(u_{21}^{\ast} , u_{22}^{\ast}\right) = 
		\bx^{\ast}$ (see the discussion on Page 14). We will prove that $\bx^{\ast}$ is a critical point 
		of $F$ (see \eqref{Model}), that is, we have to show that $0 \in \partial F\left(\bx^{\ast}
		\right)$. Since $\bu^{\ast}$ is a critical point of $\Psi$, it means that $0 \in \partial \Psi
		\left(\bu^{\ast}\right)$. Thus
		\begin{equation*}
			0 \in \left(\partial_{x_{1}} F\left(u_{1}^{\ast}\right) + \delta_{1}\left(u_{11}^{\ast} - 
			u_{12}^{\ast}\right) , \partial_{x_{2}} F\left(u_{2}^{\ast}\right) + \delta_{2}\left(u_{21}
			^{\ast} - u_{22}^{\ast}\right) , \delta_{1}\left(u_{11}^{\ast} - u_{12}^{\ast}\right), 
			\delta_{2}\left(u_{21}^{\ast} - u_{22}^{\ast}\right)\right),
		\end{equation*}			
		which means that
		\begin{equation*}
			0 \in \left(\partial_{x_{1}} F\left(u_{1}^{\ast}\right) , \partial_{x_{2}} F\left(u_{2}
			^{\ast}\right)\right) = \partial F\left(\bx^{\ast}\right).
		\end{equation*}			
		This proves that $\bx^{\ast}$ is a critical point of $F$.
	\end{proof}

\section{Numerical Results} \label{Sec:Numerics}
	In this section we consider several important applications in image processing and machine learning 
	to illustrate the numerical performance of the proposed iPALM method. All algorithms have been 
	implemented in Matlab R2013a and executed on a server with Xeon(R) E5-2680 v2 @ 2.80GHz CPUs and 
	running Linux.

\subsection{Non-Negative Matrix Factorization}
	In our first example we consider the problem of using the Non-negative Matrix Factorization (NMF) to 
	decompose a set of facial images into a number of sparse basis faces, such that each face of the 
	database can be approximated by using a small number of those parts. We use the ORL database 
	\cite{orl} that consists of $400$ normalized facial images. In order to enforce sparsity in the 
	basis faces, we additionally consider a $\ell_{0}$ sparsity constraint (see, for example, 
	\cite{Peharz}). The sparse NMF problem to be solved is given by
	\begin{equation}
  		\min_{B , C} \left\{ \frac{1}{2}\norm{A - BC}^{2} : \, B , C \geq 0, \,\, \norm{b_{i}}_{0} \leq 
  		s, \, i = 1, 2 , \ldots , r \right\},
	\end{equation}
	where $A \in \rr^{m \times n}$ is the data matrix, organized in a way that each column of the matrix 
	$A$ corresponds to one face of size $m = 64 \times 64$ pixels. In total, the matrix holds $n = 400$ 
	faces (see Figure \ref{fig:orl} for a visualization of the data matrix $A$). The matrix $B \in 
	\rr^{m \times r}$ holds the $r$ basis vectors $b_{i} \in \rr^{m \times 1}$, where $r$ corresponds to 
	the number of sparse basis faces. The sparsity constraint applied to each basis face requires that 
	the number of non-zero elements in each column vector $b_{i}$, $i = 1, 2 , \ldots , r$ should have 
	less or equal to $s$ non-zero elements. Finally, the matrix $C \in \rr^{r \times n}$ corresponds to 
	the coefficient vectors.

	\begin{figure}[t!]
  		\centering
  		\includegraphics[width=1.0\textwidth]{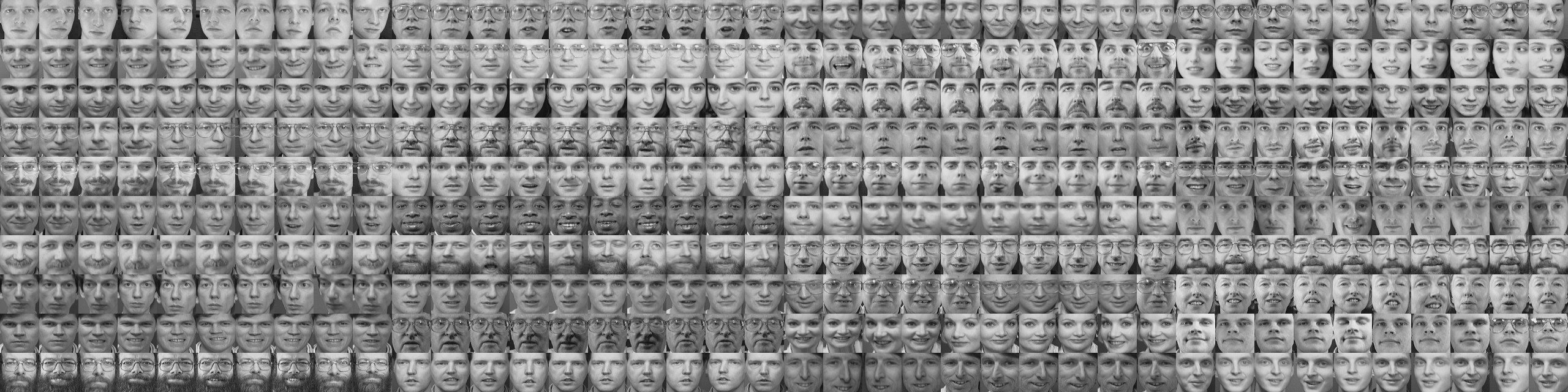}
  		\caption{ORL database which includs $400$ faces which we used in our NMF example.}		
  		\label{fig:orl}
	\end{figure}

	The application of the proposed iPALM algorithm to this problem is straight-forward. The first block 
	of variables corresponds to the matrix $B$ and the second block of variables corresponds to the 
	matrix $C$. Hence, the smooth coupling function of both blocks is given by
	\begin{equation*}
  		H\left(B , C\right) = \frac{1}{2}\norm{A - BC}^{2}.
	\end{equation*}
	The block-gradients and respective block-Lipschitz constants are easily computed via
	\begin{align*}
  		\nabla_{B} H\left(B , C\right) = \left(BC - A\right)C^{T}, & \quad L_{1}\left(C\right) = 
  		\norm{CC^{T}}_{2}, \\
  		\nabla_{C} H\left(B , C\right) = B^{T}\left(BC - A\right), & \quad L_{2}\left(B\right) = 
  		\norm{B^{T}B}_{2}.
	\end{align*}	
	The nonsmooth function for the first block, $f_{1}\left(B\right)$, is given by the non-negativity 
	constraint $B \geq 0$ and the $\ell_{0}$ sparsity constraint applied to each column of the matrix $B
	$, that is,
	\begin{equation*}
  		f_{1}\left(B\right) = 
  		\begin{cases} 
    			0, \quad B \geq 0, \, \norm{b_{i}}_{0} \leq s, \, i = 1, 2 , \ldots , r, \\
    			\infty, \,\,\, \text{else}.
  		\end{cases} 
	\end{equation*}
	Although this function is an indicator function of a nonconvex set, it is shown in \cite{BST2014}, 
	that its proximal mapping can be computed very efficiently (in fact in linear time) via
	\begin{equation*}
  		B = \prox_{f_{1}}\left(\hat{B}\right) \Leftrightarrow b_{i} = T_{s}\left(\hat{b_{i}^{+}}\right), 
  		\quad i = 1, 2 , \ldots , r,
	\end{equation*}	
	where $\hat{b_{i}^{+}} = \max\{ \hat{b_{i}} , 0 \}$ denotes an elementwise truncation at zero and 
	the operator $T_{s}\left(\hat{b_{i}^{+}}\right)$ corresponds to first sorting the values of $\hat 
	{b_{i}^{+}}$, keeping the $s$ largest values and setting the remaining $m-s$ values to zero.
\medskip

	The nonsmooth function corresponding to the second block is simply the indicator function of the 
	non-negativity constraint of $C$, that is,
	\begin{equation*}
  		f_{2}\left(C\right) = 
  		\begin{cases} 
    			0, \quad C \geq 0, \\
    			\infty, \,\,\, \text{else},
  		\end{cases}
	\end{equation*}
	and its proximal mapping is trivially given by
	\begin{equation*}
  		C = \prox_{f_{2}}\left(\hat{C}\right) = \hat{C^{+}},
	\end{equation*}
	which is again an elementwise truncation at zero.
\medskip

	In our numerical example we set $r = 25$, that is we seek for $25$ sparse basis images. Figure
	\ref{fig:basis-faces} shows the results of the basis faces when running the iPALM algorithm for 
	different sparsity settings. One can see that for smaller values of $s$, the algorithm leads to more 
	compact representations. This might improve the generalization capabilities of the representation.
\medskip

	In order to investigate the properties of iPALM on the inertial parameters, we run the algorithm for 
	a specific sparsity setting ($s = 33\%$) using different constant settings of $\alpha_{i}$ and 
	$\beta_{i}$ for $i = 1 , 2$. From our convergence theory (see Proposition \ref{P:C1}) it follows 
	that the parameters $\delta_{i}$, $i = 1 , 2$, have to be chosen as constants. However, in practice 
	we shall use a varying parameter $\delta_{i}^{k}$, $i = 1 , 2$, and assume that the parameters will 
	become constant after a certain number of iterations. Observe, that the nonsmooth function of the 
	first block is nonconvex ($\ell_{0}$ constraint), while the nonsmooth function of the second block 
	is convex (non-negativity constraint) which will affect the rules to compute the parameters (see 
	Remark \ref{R:StepConvex}).
\medskip

	We compute the parameters $\tau_{i}^{k}$, $i = 1 , 2$, by invoking \eqref{L:Technical:1} and 
	\eqref{L:Technical:2}, where we practically choose $\varepsilon = 0$. Hence, for the first, 
	completely nonconvex block, we have
	\begin{equation*}
		\delta_{1}^{k} = \frac{\alpha_{1}^{k} + \beta_{1}^{k}}{1 - 2\alpha_{1}^{k}}L_{1}(x_{2}^{k}), 
		\quad \tau_{1}^{k} = \frac{\delta_{1}^{k} + \left(1 + \beta_{1}^{k}\right)L_{1}(x_{2}^{k})}{1 -
		\alpha_{1}^{k}} \quad \Rightarrow \quad \tau_{1}^{k} = \frac{1 + 2\beta_{1}^{k}}{1 - 2\alpha_{1}
		^{k}}L_{1}(x_{2}^{k}),
	\end{equation*}
	from which it directly follows that $\alpha \in \left[0 , 0.5\right)$.
\medskip

	For the second block, where the nonsmooth function is convex we follow Remark \ref{R:StepConvex} and 
	invoke \eqref{R:StepConvex:1} and \eqref{R:StepConvex:2} to obtain
	\begin{equation*}
		\delta_{2}^{k} = \frac{\alpha_{2}^{k} + 2\beta_{2}^{k}}{2\left(1 - 2\alpha_{2}^{k}\right)}L_{2}
		(x_{1}^{k + 1}), \quad \tau_{2}^{k} = \frac{\delta_{2}^{k} + \left(1 + \beta_{2}^{k}\right)L_{2}
		(x_{1}^{k + 1})}{2 - \alpha_{2}^{k}} \quad \Rightarrow \quad \tau_{2}^{k} = \frac{1 + 2\beta_{2}
		^{k}}{2\left(1 - \alpha_{2}^{k}\right)}L_{2}(x_{1}^{k + 1}).
	\end{equation*}
	Comparing this parameter to the parameter of the first block we see that now $\alpha \in \left[0 , 
	1\right)$ and the value of $\tau$ is smaller by a factor of $2$. Hence, convexity in the nonsmooth 
	function allows for twice larger steps.

	\begin{figure}[ht!]
  		\centering
  		\subfigure[$s=50\%$]{\includegraphics[width=0.3\textwidth]{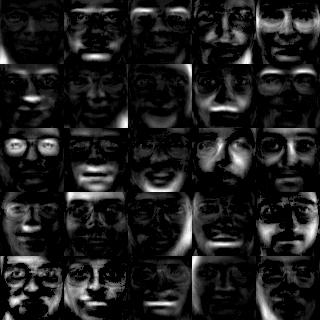}}\hfill
  		\subfigure[$s=33\%$]{\includegraphics[width=0.3\textwidth]{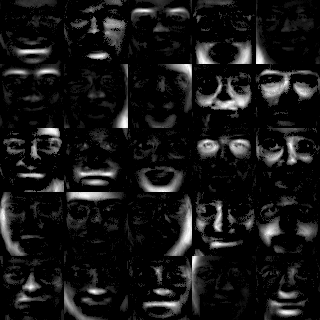}}\hfill
  		\subfigure[$s=25\%$]{\includegraphics[width=0.3\textwidth]{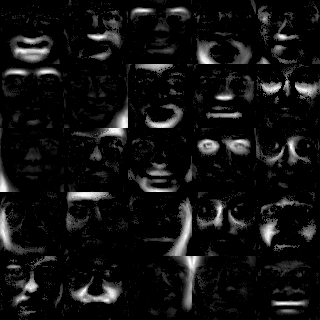}}
  		\caption{$25$ basis faces using different sparsity settings. A sparsity of $s = 25\%$ means that 
  		each basis face contains only $25\%$ non-zero pixels. Clearly, stronger sparsity leads to a more 
  		compact representation.}
  		\label{fig:basis-faces}
	\end{figure}

	In Tables~\ref{tab:nnmf-exact} and~\ref{tab:nnmf-bt}, we report the performance of the iPALM 
	algorithm for different settings of the inertial parameters $\alpha_{i}^{k}$ and $\beta_{i}^{k}$, $i 
	= 1 , 2$ and $k \in \nn$. Since we are solving a nonconvex problem, we report the values of the 
	objective function after a certain number of iterations ($100, 500, 1000$ and $5000$). As already 
	mentioned, setting $\alpha_{i} = \beta_{i} = 0$, $i = 1 , 2$, reverts the proposed iPALM algorithm 
	to the PALM algorithm \cite{BST2014}. In order to estimate the (local) Lipschitz constants $L_{1}
	(x_{2}^{k})$ and $L_{2} (x_{1}^{k + 1})$ we computed the exact values of the Lipschitz constants by
	computing the largest eigenvalues of $C^{k}(C^{k})^{T}$ and $(B^{k})^{T}B^{k}$, respectively. The 
	results are given in table \ref{tab:nnmf-exact}. Furthermore, we also implemented a standard 
	backtracking procedure, see for example \cite{BT09,OCBP2014}, which makes use of the descent lemma 
	in order to estimate the value of the (local) Lipschitz constant. In terms of iterations of iPALM, 
	the backtracking procedure generally leads to a better overall performance but each iteration also 
	takes more time compared to the exact computation of the Lipschitz constants. The results based on
	backtracking are shown in table~\ref{tab:nnmf-bt}.

	\begin{table}[t!]
		\begin{tabular}{c|c|c|c|c||c}
    	    		\toprule  
    	    		& $K = 100$ & $K = 500$ & $K = 1000$ & $K = 5000$ & time (s)\\           
    	    		\midrule
    	    		$\alpha_{1,2} = \beta_{1,2} = 0.0$ & 12968.17 & 7297.70  & 5640.11 & 4088.22 & 196.63 \\
    	    		$\alpha_{1,2} = \beta_{1,2} = 0.2$ & 12096.91 & 8453.29  & 6810.63 & 4482.00 & 190.47 \\
    	    		$\alpha_{1,2} = \beta_{1,2} = 0.4$ & 12342.81 & 11496.57 & 9277.11 & 5617.02 & 189.27 \\
    	    		\midrule
    	    		$\alpha_{1} = \beta_{1} = 0.2, \alpha_{2} = \beta_{2} = 0.4$ & 12111.55 & 8488.35 & 6822.95 
    	    		& 4465.59 & 201.05 \\
    	    		$\alpha_{1} = \beta_{1} = 0.4, \alpha_{2} = \beta_{2} = 0.8$ & 12358.00 & 11576.72 & 9350.37 
    	    		& 5593.84 & 200.11 \\
        		\midrule
       		$\alpha_{1,2}^{k} = \beta_{1,2}^{k} = (k - 1)/(k + 2)$ & 5768.63 & 3877.41 & 3870.98 & 
       		3870.81 & 186.62 \\
    	    		\bottomrule
		\end{tabular}  
  	  	\caption{Values of the objective function of the sparse NMF problem after $K$ iterations, using
  	  	different settings for the inertial parameters $\alpha_{i}$ and $\beta_{i}$, $i = 1 , 2$ and 
  	  	using computation of the exact Lipschitz constant.}
  	  	\label{tab:nnmf-exact}
	\end{table}

	\begin{table}[t!]
		\centering
  	  	\begin{tabular}{c|c|c|c|c||c}
    	    		\toprule  
    	    		& $K = 100$ & $K = 500$ & $K = 1000$ & $K = 5000$ & time (s)\\           
    	    		\midrule
    	    		$\alpha_{1,2} = \beta_{1,2} = 0.0$ & 8926.23 & 5037.89 & 4356.65 & 4005.53 & 347.17 \\
    	    		$\alpha_{1,2} = \beta_{1,2} = 0.2$ & 8192.71 & 4776.64 & 4181.40 & 4000.41 & 349.42 \\
    	    		$\alpha_{1,2} = \beta_{1,2} = 0.4$ & 8667.62 & 4696.64 & 4249.57 & 4060.95 & 351.78 \\
    	    		\midrule
    	    		$\alpha_{1} = \beta_{1} = 0.2, \alpha_{2} = \beta_{2} = 0.4$ & 8078.14 & 4860.74 & 4274.46 & 
    	    		3951.28 & 353.53 \\
	    	    $\alpha_{1} = \beta_{1} = 0.4, \alpha_{2} = \beta_{2} = 0.8$ & 8269.27 & 4733.76 & 4243.29 & 
	    	    4066.63 & 357.35 \\       
    	    		\midrule
    	    		$\alpha_{1,2}^{k} = \beta_{1,2}^{k} = (k-1)/(k+2)$ & 5071.71 & 3902.91 & 3896.40 & 3869.13 & 			347.90 \\    
    	    		\midrule
    	    		iPiano ($\beta = 0.4$) & 14564.65 & 12200.78 & 11910.65 & 7116.22 & 258.42 \\
    	    		\bottomrule
  	  	\end{tabular}  
  	  	\caption{Values of the objective function of the sparse NMF problem after $K$ iterations, using
		different settings for the inertial parameters $\alpha_{i}$ and $\beta_{i}$, $i = 1 , 2$ and 
		using backtracking to estimate the Lipschitz constant.} 
 		\label{tab:nnmf-bt}
	\end{table}

	We tried the following settings for the inertial parameters.
	\begin{itemize}
		\item \textbf{Equal:} Here we used the same inertial parameters for both the first and the 
			second block. In case all parameters are set to be zero, we recover PALM. We observe that 
			the use of the inertial parameters can speed up the convergence but for too large inertial 
			parameters we also observe not as good results as in the case that we use PALM, \ie no 
			inertial is used. Since the problem is highly nonconvex the final value of the objective 
			function can be missleading since it could correspond to a bad stationary point.
		\item \textbf{Double:} Since the nonsmooth function of the second block is convex, we can take 
			twice larger inertial parameters. We observe an additional speedup by taking twice larger 
			inertial parameters. Again, same phenomena occur here, too large inertial parameters yields
  			inferior performances of the algorithm.
		\item \textbf{Dynamic:} We also report the performance of the algorithm in case we choose 
			dynamic inertial parameters similar to accelerated methods in smooth convex optimization
			\cite{N04}. We use $\alpha_{i}^{k} = \beta_{i}^{k} = \left(k - 1\right)/\left(k + 2\right)$, 
			$i = 1 , 2$, and we set the parameters $\tau_{1}^{k} = L_{1}(x_{2}^{k})$ and $\tau_{2}^{k} = 
			L_{2}(x_{1}^{k + 1})$. One can see that this setting outperforms the other settings by a 
			large margin. Although our current convergence analysis does not support this setting, it 
			shows the great potential of using inertial algorithms when tackling nonconvex optimization 
			problems. The investigation of the convergence properties in this setting will be subject to 
			future research.
	\end{itemize}
	Finally, we also compare the proposed algorithm to the iPiano algorithm \cite{OCBP2014}, which is 
	similar to the proposed iPALM algorithm but does not make use of the block structure. Note that in 
	theory, iPiano is not applicable since the gradient of the overall problem is not Lipschitz 
	continuous. However, in practice it turns out that using a backtracking procedure to determine the 
	Lipschitz constant of the gradient is working and hence we show comparisons. In the iPiano 
	algorithm, the step-size parameter $\tau$ ($\alpha$ in terms of \cite{OCBP2014}) was set as
	\begin{equation*}
		\tau \leq \frac{1 - 2\beta}{L},
	\end{equation*}
	from which it follows that the inertial parameter $\beta$ can be chosen in the interval $\left[0 , 
	0.5\right)$. We used $\beta = 0.4$ since it give the best results in our tests. Observe that the 
	performance of iPALM is much better compared to the performance of iPiano. In terms of CPU time, one 
	iteration of iPiano is clearly slower than one iteration of iPALM using the exact computation of the 
	Lipschitz constant, because the backtracking procedure is computationally more demanding than the 
	computation of the largest eigenvalues of $C^{k}(C^{k})^{T}$ and $(B^{k})^{T}B^{k}$. Comparing the 
	iPiano algorithm to the version of iPALM which uses backtracking, one iteration of iPiano is faster 
	since iPALM needs to backtrack the Lipschitz constants for both of the two blocks.

\subsection{Blind Image Deconvolution}
	In our second example, we consider the well-studied (yet challenging) problem of blind image 
	deconvolution (BID). Given a blurry and possibly noisy image $f \in \rr^{M}$ of $M = m_{1} \times 
	m_{2}$ pixels, the task is to recover both a sharp image $u$ of the same size and the unknown point 
	spread function $b \in \rr^{N}$, which is a small $2D$ blur kernel of size $N = n_{1} \times n_{2}$ 
	pixels. We shall assume that the blur kernel is normalized, that is $b \in \Delta^{N}$, where $
	\Delta^{N}$ denotes the standard unit simplex defined by
	\begin{equation}
  		\Delta^{N} = \left\{ b \in \rr^{N} : \, b_{i} \geq 0, \; i = 1 , 2 , \ldots , N, \; \sum_{i = 1}
    		^{N} b_{i} = 1 \right\}.  
	\end{equation}
	Furthermore, we shall assume that the pixel intensities of the unknown sharp image $u$ are 
	normalized to the interval $\left[0 , 1\right]$, that is $u \in U^{M}$, where
	\begin{equation}
  		U^{M} = \left\{ u \in \rr^{M} : \, u_{i} \in \left[0 , 1\right] , \; i = 1 , 2, \ldots , M 
  		\right\}.
	\end{equation}
	We consider here a classical blind image deconvolution model (see, for example, 
	\cite{perrone_emmcvpr2015}) defined by
	\begin{equation}
  		\min_{u , b} \left\{ \sum_{p = 1}^{8} \phi\left(\nabla_{p}u\right) + \frac{\lambda}{2}\norm{u
      \ast_{m_{1} , m_{2}}b - f}^{2} : \, u \in U^{M}, \; b \in \Delta^{N} \right\}.
	\end{equation}
	The first term is a regularization term which favors sharp images and the second term is a data 
	fitting term that ensures that the recovered solution approximates the given blurry image. The 
	parameter $\lambda > 0$ is used to balance between regularization and data fitting. The linear 
	operators $\nabla_{p}$ are finite differences approximation to directional image gradients, which in 
	implicit notation are given by
	\begin{align*}
  		\left(\nabla_{1}u\right)_{i , j} & = u_{i + 1 , j} - u_{i , j}, &
  		\left(\nabla_{2}u\right)_{i , j} & = u_{i , j + 1} - u_{i , j}, \\
  		\left(\nabla_{3}u\right)_{i , j} & = \frac{u_{i + 1 , j + 1} - u_{i ,j }}{\sqrt{2}}, &
  		\left(\nabla_{4}u\right)_{i , j} & = \frac{u_{i + 1 , j - 1} - u_{i , j}}{\sqrt{2}} \\
  		\left(\nabla_{5}u\right)_{i , j} & = \frac{u_{i + 2 , j + 1} - u_{i , j}}{\sqrt{5}}, &
  		\left(\nabla_{6}u\right)_{i , j} & = \frac{u_{i + 2 , j - 1} - u_{i , j}}{\sqrt{5}}, \\
  		\left(\nabla_{7}u\right)_{i , j} & = \frac{u_{i + 1 , j + 2} - u_{i , j}}{\sqrt{5}}, &
  		\left(\nabla_{8}u\right)_{i , j} & = \frac{u_{i - 1 , j + 2} - u_{i , j}}{\sqrt{5}},
	\end{align*}
	for $1 \leq i \leq m_{1}$ and $1 \leq j \leq m_{2}$. We assume natural boundary conditions, that is 
	$\left(\nabla_{p}\right)_{i , j} = 0$, whenever the operator references a pixel location that lies 
	outside the domain. The operation $u\ast_{m_{1} , m_{2}}b$ denotes the usual 2D modulo - $m_{1} , 
	m_{2}$ discrete circular convolution operation defined by (and interpreting the image $u$ and the 
	blur kernel $b$ as 2D arrays)
	\begin{equation}
  		\left(u\ast_{m_{1} , m_{2}} b\right)_{i , j} = \sum_{k = 0}^{n_{1}}\sum_{l = 0}^{n_{2}} b_{k , 
  		l}\,u_{(i - k)_{\mathrm{mod} \, m_{1}} , (j - l)_{\mathrm{mod} \, m_{2}}}, \quad 1 \leq i \leq 
  		m_{1}, \; 1 \leq j \leq m_{2}.
	\end{equation}	
	For ease the notation, we will rewrite the 2D discrete convolutions as the matrix vector products of 
	the form
	\begin{equation}
  		v = u\ast_{m_{1} , m_{2}} b \Leftrightarrow v = K\left(b\right)u \Leftrightarrow v = K\left(u  
  		\right)b,
	\end{equation}
	where $K\left(b\right) \in \rr^{M \times M}$ is a sparse matrix, where each row holds the values of 
	the blur kernel $b$, and $K\left(u\right) \in \rr^{M \times N}$ is a dense matrix, where each column 
	is given by a circularly shifted version of the image $u$. Finally, the function $\phi\left(\cdot
	\right)$ is a differentiable robust error function, that promotes sparsity in its argument. For a 
	vector $x \in \rr^M$, the function $\phi$ is defined as
	\begin{equation}
  		\phi\left(x\right) = \sum_{i = 1}^{M} \log\left(1 + \theta x_{i}^{2}\right),
	\end{equation}
	where $\theta > 0$ is a parameter. Here, since the argument of the function are image gradients, the
	function promotes sparsity in the edges of the image. Hence, we can expect that sharp images result 
	in smaller values of the objective function than blurry images. For images that can be well 
	described by piecewise constant functions (see, for example, the books image in Figure
	\ref{fig:bid}), such sparsity promoting function might be well suited to favor sharp images, but we 
	would like to stress that this function could be a bad choice for textured images, since a sharp
	image usually has much stronger edges than the blurry image. This often leads to the problem that 
	the trivial solution ($b$ being the identity kernel and $u = f$) has a lower energy compared to the 
	true solution.  
\medskip

	In order to apply the iPALM algorithm, we identify the following functions
	\begin{equation}
  		H\left(u , b\right) = \sum_{p = 1}^{8} \phi\left(\nabla_{p}u\right) + \frac{\lambda}{2}\norm{u
   	 	\ast_{m_{1} , m_{2}}b - f}^{2},
	\end{equation}
	which is smooth with block Lipschitz continuous gradients given by
	\begin{align*}
  		\nabla_{u} H\left(u , b\right) & = 2\theta\sum_{p = 1}^{8} \nabla_{p}^{T}\mathrm{vec} 
  		\left(\frac{\left(\nabla_{p}u\right)_{i , j}}{1 + \theta\left(\nabla_{p}u\right)_{i , j}^{2}}
  		\right)_{i , j = 1}^{m_{1} , m_{2}} + \lambda K^{T}\left(b\right)\left(K\left(b\right)u - f
  		\right), \\
  		\nabla_{b} H\left(u , b\right) & = \lambda K^{T}\left(u\right)\left(K\left(u\right)b - f\right),
	\end{align*}
	where the operation $\mathrm{vec}\left(\cdot\right)$ denotes the formation of a vector from the 
	values passed to its argument. The nonsmooth function of the first block is given by
	\begin{equation}
  		f_{1}\left(u\right) = 
  		\begin{cases}
    			0, \quad u \in U^{M}, \\
    			\infty, \,\,\, \text{else},
  		\end{cases}
	\end{equation}
	and the proximal map with respect to $f_{1}$ is computed as
	\begin{equation}
  		u = \prox_{f_{1}}\left(\hat{u}\right) \Leftrightarrow u_{i , j} = \max\left\{ 0 , \min(1 , 
    		\hat{u_{i , j}}) \right\}.
	\end{equation}
	The nonsmooth function of the second block is given by the indicator function of the unit simplex 
	constraint, that is,
	\begin{equation}
  		f_{2}\left(b\right) = 
  		\begin{cases}
    			0, \quad b \in \Delta^{N}, \\
    			\infty, \,\,\, \text{else}.
 		\end{cases}
	\end{equation}
	In order to compute the proximal map with respect to $f_{2}$, we use the algorithm proposed in 
	\cite{Duchietal} which computes the projection onto the unit simplex in $O(N\log N)$ time.

	\begin{figure}[t!]
  		\centering 
  		\subfigure[original books image]{\includegraphics[width=0.3\textwidth]{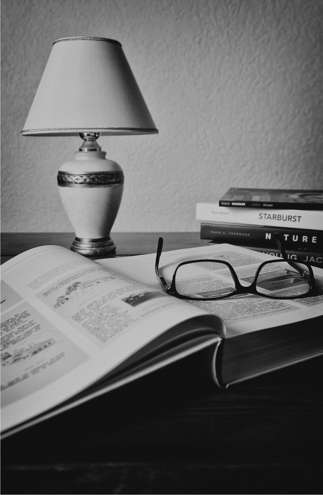}}
  		\hspace{1cm}
  		\subfigure[convolved image]{\includegraphics[width=0.3\textwidth]{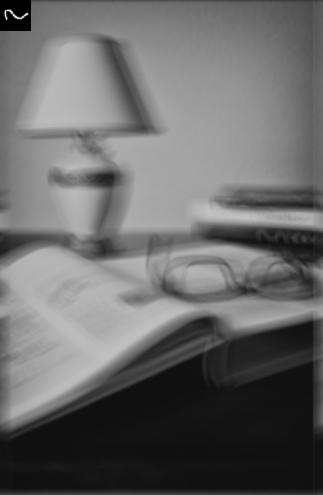}} \\
  		\subfigure[$\alpha_{1 , 2} = \beta_{1 , 2} = 0$]{\includegraphics[width=0.3\textwidth]{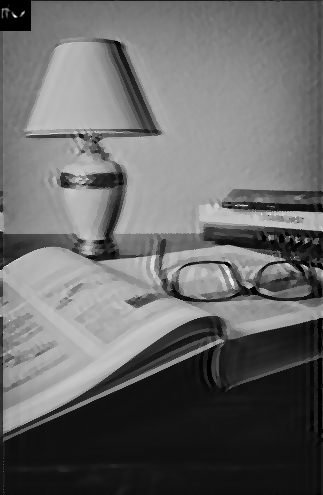}}
  		\hfill 
  		\subfigure[$\alpha_{1 , 2} = \beta_{1 , 2} = 0.4$]{\includegraphics[width=0.3\textwidth]{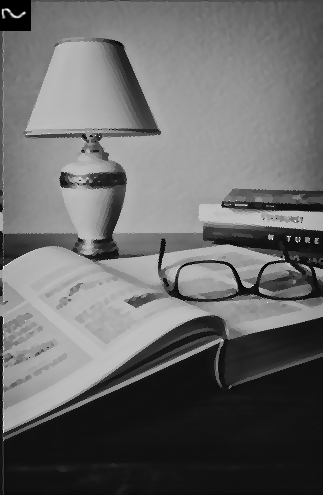}}
  		\hfill 
  		\subfigure[$\alpha_{1 , 2} = \beta_{1 , 2} = \frac{k- 1 }{k + 2}$]	{\includegraphics[width=0.3\textwidth]{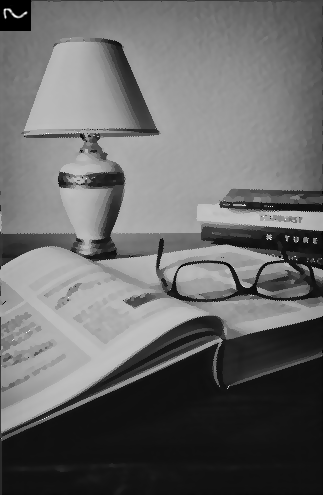}}
  		\caption{Results of blind deconvolution using $K = 5000$ iterations and different settings of 
  		the inertial parameters. The result without inertial terms (\ie $\alpha_{i} = \beta_{i} = 0$ for 
  		$i = 1 , 2$) is significantly worse compared to the result using inertial terms. The best 
  		results are obtained using the dynamic choice of the inertial parameters.}
  		\label{fig:bid}
	\end{figure}

	We applied the BID problem to the books image of size $m_{1} \times m_{2} = 495 \times 323$ pixels 
	(see Figure \ref{fig:bid}). We set $\lambda = 10^{6}$, $\theta = 10^{4} $ and generated the blurry 
	image by convolving it with a s-shaped blur kernel of size $n_{1} \times n_{2} = 31 \times 31$ 
	pixels. Since the nonsmooth functions of both blocks are convex, we can set the parameters 
	as (compare to the first example)
	\begin{equation} \label{eq:tau-bid}
  		\tau_{1}^{k} = \frac{1 + 2\beta_{1}^{k}}{2\left(1 - \alpha_{1}^{k}\right)}L_{1}(x_{2}^{k}) \quad 
  		\text{and} \quad \tau_{2}^{k} = \frac{1 + 2\beta_{2}^{k}}{2\left(1 - \alpha_{2}^{k}\right)}L_{2}
  		(x_{1}^{k + 1}).
	\end{equation}
	To determine the values of the (local) Lipschitz constants, we used again a backtracking scheme
	\cite{BT09,OCBP2014}. In order to avoid the trivial solution (that is, $u = f$ and $b$ being the 
	identity kernel) we took smaller descent steps in the blur kernel which was realized by multiplying 
	$\tau_{2}^{k}$, $k \in \nn$, by a factor of $c = 5$. Note that this form of ``preconditioning'' does 
	not violate any step-size restrictions as we can always take larger values for $\tau$ than the value 
	computed in \eqref{eq:tau-bid}.

	\begin{table}[t!]
  		\centering
  		\begin{tabular}{c|c|c|c|c||c}
    			\toprule  
    			& $K = 100$ & $K = 500$  & $K = 1000$ & $K = 5000$ & time (s)\\
    			\midrule
    			$\alpha_{1,2} = \beta_{1,2} = 0.0$ & 2969668.92 & 1177462.72 & 1031575.57 & 847268.70 & 
    			1882.63 \\
    			$\alpha_{1,2} = \beta_{1,2} = 0.4$ & 5335748.90 & 1402080.44 & 1160510.16 & 719295.30 & 
    			1895.61 \\
    			$\alpha_{1,2} = \beta_{1,2} = 0.8$ & 5950073.38 & 1921105.31 & 1447739.06 & 780109.56 & 
    			1888.25 \\
    			\midrule
    			$\alpha_{1,2}^{k} = \beta_{1,2}^{k} = (k - 1)/(k + 2)$ & 2014059.03 & 978234.23 & 683694.72 
    			& 678090.51 & 1867.19 \\
		    \bottomrule
  		\end{tabular}   
  	 	\caption{Values of the objective function of the BID problem, after $K$ iterations and using 
  	 	different settings for the inertial parameters $\alpha_{i}$ and $\beta_{i}$ for $i = 1 , 2$.}		
  		\label{tab:bid}
	\end{table}

	Table \ref{tab:bid} shows an evaluation of the iPALM algorithm using different settings of the 
	inertial parameters $\alpha_{i}$ and $\beta_{i}$ for $i = 1 , 2$. First, we observe that the use of 
	inertial parameters lead to higher values of the objective function after a smaller number of 
	iterations. However, for a larger number of iterations the use of inertial forces leads to 
	significantly lower values. Again, the use of dynamic inertial parameters together with the 
	parameter $\tau_{1}^{k} = L_{1}(x_{2}^{k})$ and $\tau_{2}^{k} = L_{2}(x_{1}^{k + 1})$ leads to the 
	best overall performance. In Figure \ref{fig:bid} we show the results of the blind deconvolution 
	problem. One can see that the quality of the recovered image as well as the recovered blur kernel is 
	much better using inertial forces. Note that the recovered blur kernel is very close to the true 
	blur kernel but the recovered image appears slightly more piecewise constant than the original 
	image.

\subsection{Convolutional LASSO}

	In our third experiment we address the problem of sparse approximation of an image using dictionary 
	learning. Here, we consider the convolutional LASSO model~\cite{Zeiler10}, which is an interesting 
	variant of the well-known patch-based LASSO model~\cite{Olshausen97,AharonEladBruckstein-KSVD} for 
	sparse approximations. The convolutional model inherently models the transnational invariance of 
	images, which can be considered as an advantage over the usual patch-based model which treats every 
	patch independently.
\medskip

	The idea of the convolutional LASSO model is to learn a set of small convolution filters $d_{j} \in 
	\rr^{l \times l}$, $j = 1 , 2 , \ldots , p$, such that a given image $f \in \rr^{m \times n}$ can be 
	written as $f \approx \sum_{j = 1}^{p} d_{j}\ast_{m,n} v_{j}$, where $\ast_{m,n}$ denotes again the
	$2D$ modulo $m , n$ discrete circular convolution operation and $v_{j} \in \rr^{m \times n}$ are the 
	corresponding coefficient images which are assumed to be sparse. In order to make the convolution 
	filters capture the high-frequency information in the image, we fix the first filter $d_{1}$ to be a 
	Gaussian (low pass) filter $g$ with standard deviation $\sigma_{l}$ and we set the corresponding 
	coefficient image $v_{j}$ equal to the initial image $f$. Furthermore, we assume that the remaining 
	filters $d_{j}$, $j = 2 , 3 , \ldots , p$ have zero mean as well as a $\ell_{2}$-norm less or equal 
	to $1$. In order to impose a sparsity prior on the coefficient images $v_{j}$ we make use of the 
	$\ell_{1}$-norm. The corresponding objective function is hence given by
	\begin{align} \label{eq:convolutional-lasso}
  		\min_{(d_j)_{j=1}^{p},(v_j)_{j=1}^{p}} \sum_{j = 1}^{p} \lambda\norm{v_{j}}_{1} + \frac{1}{2}
  		\norm{\sum_{j = 1}^{p} d_{j}\ast_{m,n} v_{j} - f}_{2}^{2}, \\
  		\text{s.t.} \, d_{1} = g, \, v_{1} = f \, \sum_{a,b=1}^{l} (d_j)_{a,b} = 0, \, \norm{d_{j}}_{2} 
  		\leq 1, \, j = 2 , 3 , \ldots , p, \nonumber
	\end{align}
	where the parameter $\lambda > 0$ is used to control the degree of sparsity. It is easy to see that 
	the convolutional LASSO model nicely fits to the class of problems that can be solved using the 
	proposed iPALM algorithm. We leave the details to the interested reader. In order to compute the 
	(local) Lipschitz constants we again made use of a backtracking procedure and the parameters 
	$\tau_{i}^{k}$, $i = 1 , 2$ were computed using~\eqref{eq:tau-bid}.

	\begin{table}[t!]
  		\centering
  		\begin{tabular}{c|c|c|c|c||c}
    			\toprule  
    			& $K = 100$ & $K = 200$ & $K = 500$ & $K = 1000$ & time (s) \\           
    			\midrule
    			$\alpha_{1,2} = \beta_{1,2} = 0.0$ & 336.13 & 328.21 & 322.91 & 321.12 & 3274.97 \\
    			$\alpha_{1,2} = \beta_{1,2} = 0.4$ & 329.20 & 324.62 & 321.51 & 319.85 & 3185.04 \\
    			$\alpha_{1,2} = \beta_{1,2} = 0.8$ & 325.19 & 321.38 & 319.79 & 319.54 & 3137.09 \\
    			\midrule    
    			$\alpha_{1,2}^{k} = \beta_{1,2}^{k} = (k - 1)/(k + 2)$ & 323.23 & 319.88 & 318.64 & 318.44 & 
    			3325.37 \\
    			\bottomrule
  		\end{tabular}  
  		\caption{Values of the objective function for the convolutional LASSO model using different 
  		settings of the inertial parameters.} 
  		\label{tab:dile}
	\end{table}

	We applied the convolutional LASSO problem to the Barbara image of size $512 \times 512$ pixels, 
	which is shown in Figure \ref{fig:dile}. The Barbara image contains a lot of stripe-like texture and 
	hence we expect that the learned convolution filters will contain these characteristic structures. 
	In our experiment, we learned a dictionary made of $81$ filter kernels, each of size $9 \times 9$ 
	pixels. The regularization parameter $\lambda$ was set to $\lambda = 0.2$. From Figure 
	\ref{fig:dile} it can be seen that the learned convolution filters indeed contain stripe-like 
	structures of different orientations but also other filters that are necessary to represent the 
	other structures in the image. Table \ref{tab:dile} summarizes the performance of the iPALM 
	algorithm for different settings of the inertial parameters. From the results, one can see that 
	larger settings of the inertial parameters lead to a consistent improvement of the convergence 
	speed. Again, using a dynamic choice of the inertial parameters clearly outperforms the other 
	settings.
\medskip

	For illustration purposes we finally applied the learned dictionary to denoise a noisy variant of 
	the same Barbara image. The noisy image has been generated by adding zero-mean Gaussian noise with 
	standard deviation $\sigma = 0.1$ to the original image. For denoising we use the previously learned 
	dictionary $d$ and minimizing the convolutional LASSO problem only with respect to the coefficient 
	images $v$. Note that this is a convex problem and hence,it can be efficiently minimized using for 
	example the FISTA algorithm \cite{BT09}. The denoised image is again shown in Figure \ref{fig:dile}. 
	Observe that the stripe-like texture is very well preserved in the denoised image.

	\begin{figure}[t!]
  		\centering 
  		\subfigure[Original Barbara image]{\includegraphics[width=0.45\textwidth]{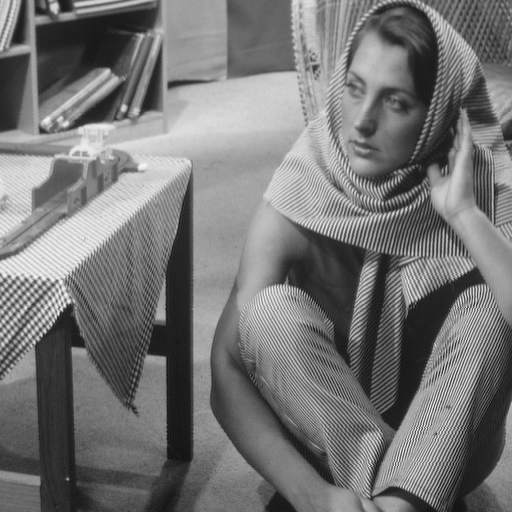}}
   		\hfill
  		\subfigure[Learned $9 \times 9 $ dictionary]{\includegraphics[width=0.3\textwidth]{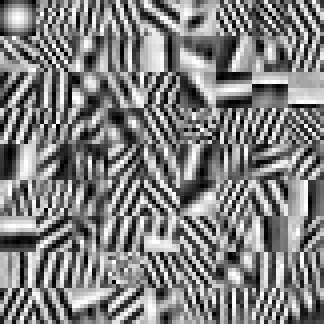}}\hspace*{1.1cm}\\
  		\subfigure[Noisy Barbara image ($\sigma = 0.1$)]{\includegraphics[width=0.45\textwidth]{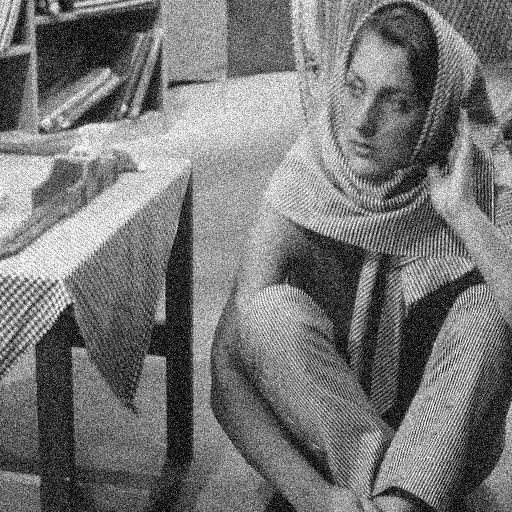}}
  		\hfill 
  		\subfigure[Denoised Barbara image (PSNR=28.33)]{\includegraphics[width=0.45\textwidth]{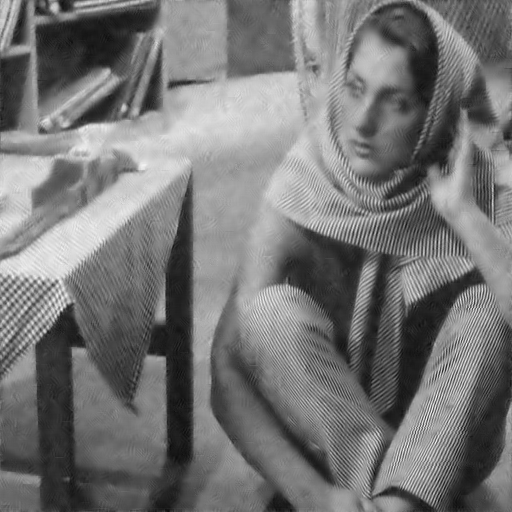}}
  		\caption{Results of dictionary learning using the convolutional LASSO model. Observe that the 
  		learned dictionary very well captures the stripe-like texture structures of the Barbara image.}			\label{fig:dile}
	\end{figure}

\section{Conclusion}

        In this paper we proposed iPALM which an inertial variant of the Proximal Alternating Linearized 
        Minimization (PALM) method proposed in \cite{BST2014} for solving a broad class of nonconvex and 
        nonsmoooth optimization problems consisting of block-separable nonsmooth, nonconvex functions 
        with easy to compute proximal mappings and a smooth coupling function with block-Lipschitz 
        continuous gradients. We studied the convergence properties of the algorithm and provide bounds 
        on the inertial and step-size parameters that ensure convergence of the algorithm to a critical 
        point of the problem at hand. In particular, we showed that in case the objective function 
        satisfies the Kurdyka-{\L}ojasiewicz (KL) property, we can obtain finite length property of the 
        generated sequence of iterates. In several numerical experiments we show the advantages of the 
        proposed algorithm to minimize a number of well-studied problems and image processing and 
        machine learning. In our experiments we found that choosing the inertial and step-size 
        parameters dynamically, as pioneered by Nesterov~\cite{N83}, leads to a significant performance 
        boost, both in terms of convergence speed and convergence to a ``better'' critical point of the 
        problem. Our current convergence theory does not support this choice of parameters but 
        developing a more general convergence theory will be interesting and a subject for future 
        research.

\section{Appendix A: Proof of Lemma \ref{L:Technical}} \label{A:Technical}
	We first recall the result that should be proved.
	\begin{lemma}
		Consider the functions $g : \rr_{+}^{5} \rightarrow \rr$ and $h : \rr_{+}^{5} \rightarrow \rr$ 
		defined as follow
		\begin{align*}
			g\left(\alpha , \beta , \delta , \tau , L\right) & = \tau\left(1 - \alpha\right) - \left(1 + 
			\beta\right)L - \delta, \\
			h\left(\alpha , \beta , \delta , \tau , L\right) & = \delta - \tau\alpha - L\beta.
		\end{align*}
		Let $\varepsilon > 0$ and $\bar{\alpha} > 0$ be two real numbers for which $0 \leq \alpha \leq 
		\bar{\alpha} < 0.5\left(1 - \varepsilon\right)$. Assume, in addition, that $0 \leq L \leq 
		\lambda$ for some $\lambda > 0$ and $0 \leq \beta \leq \bar{\beta}$ with $\bar{\beta} > 0$. If
		\begin{align}
			\delta_{\ast} & = \frac{\bar{\alpha} + \bar{\beta}}{1 - \varepsilon - 2\bar{\alpha}}\lambda, 
			\label{L:Technical:11} \\
			\tau_{\ast} & = \frac{\left(1 + \varepsilon\right)\delta_{\ast} + \left(1 + \beta\right)L}
			{1 - \alpha}, \label{L:Technical:21}
		\end{align}
		then $g\left(\alpha , \beta , \delta_{\ast} , \tau_{\ast} , L\right) = 
		\varepsilon\delta_{\ast}$ and $h\left(\alpha , \beta , \delta_{\ast} , \tau_{\ast} , L\right) 
		\geq \varepsilon\delta_{\ast}$.		
	\end{lemma}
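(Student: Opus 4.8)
The plan is to treat the two assertions separately, since the statement about $g$ is an identity obtained by direct substitution, whereas the statement about $h$ is an inequality that I would reduce to a single clean estimate and then close using the monotonicity bounds on the parameters. For the identity, I would simply insert the definition \eqref{L:Technical:21} into the formula for $g$. Since $\tau_{\ast}\left(1 - \alpha\right) = \left(1 + \varepsilon\right)\delta_{\ast} + \left(1 + \beta\right)L$ by construction, the coupling term $\left(1 + \beta\right)L$ cancels against the same term in the definition of $g$, leaving $\left(1 + \varepsilon\right)\delta_{\ast} - \delta_{\ast} = \varepsilon\delta_{\ast}$. This is a one-line computation requiring no case distinctions.

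For the inequality, I would first rewrite the target $h \geq \varepsilon\delta_{\ast}$ in the equivalent form $\delta_{\ast}\left(1 - \varepsilon\right) \geq \tau_{\ast}\alpha + L\beta$. Substituting $\tau_{\ast}$ from \eqref{L:Technical:21} and multiplying through by $\left(1 - \alpha\right)$ — a legitimate and positivity-preserving step because $\alpha \leq \bar{\alpha} < 0.5\left(1 - \varepsilon\right) < 1$ — I would expand both sides. The mixed terms $\alpha\beta L$ cancel, and the two contributions $\alpha\left(1 - \varepsilon\right)\delta_{\ast}$ and $\alpha\left(1 + \varepsilon\right)\delta_{\ast}$ combine into $2\alpha\delta_{\ast}$, so the whole inequality collapses to the compact form
\[
	\delta_{\ast}\left(1 - \varepsilon - 2\alpha\right) \geq L\left(\alpha + \beta\right).
\]

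It then remains to verify this last inequality, and here I would invoke the definition \eqref{L:Technical:11} of $\delta_{\ast}$ together with the observation that $\alpha \leq \bar{\alpha}$ forces $1 - \varepsilon - 2\alpha \geq 1 - \varepsilon - 2\bar{\alpha} > 0$. Substituting $\delta_{\ast}$ and cancelling the positive factor $1 - \varepsilon - 2\bar{\alpha}$ against the smaller factor $1 - \varepsilon - 2\alpha$ yields $\delta_{\ast}\left(1 - \varepsilon - 2\alpha\right) \geq \left(\bar{\alpha} + \bar{\beta}\right)\lambda$, and the proof concludes by the termwise bound $\left(\bar{\alpha} + \bar{\beta}\right)\lambda \geq \left(\alpha + \beta\right)L$, which follows from $\alpha \leq \bar{\alpha}$, $\beta \leq \bar{\beta}$, $L \leq \lambda$ and nonnegativity of all quantities. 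I do not expect a genuine obstacle: the only care needed is the bookkeeping in the cancellation and tracking that every denominator and every factor multiplied across an inequality is positive, both of which are guaranteed throughout by the standing assumption $\bar{\alpha} < 0.5\left(1 - \varepsilon\right)$.
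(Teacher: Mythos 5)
Your proposal is correct and follows essentially the same route as the paper's own proof: the identity for $g$ by direct substitution of \eqref{L:Technical:21}, and the inequality for $h$ reduced—after clearing the denominator $1-\alpha$ and cancelling the $\alpha\beta L$ terms—to the same key inequality $\left(1 - \varepsilon - 2\alpha\right)\delta_{\ast} \geq L\left(\alpha + \beta\right)$, which is then closed exactly as the paper does, via $\alpha \leq \bar{\alpha}$, $\beta \leq \bar{\beta}$, $L \leq \lambda$ and positivity of $1 - \varepsilon - 2\bar{\alpha}$. No gaps; the only difference is presentational (the paper computes $h - \varepsilon\delta_{\ast}$ over a common denominator rather than multiplying the target inequality through by $1-\alpha$).
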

	\begin{proof}
		From the definition of $g$ we immediately obtain that
		\begin{equation*}
			g\left(\alpha , \beta , \delta_{\ast} , \tau_{\ast} , L\right) = \tau_{\ast}\left(1 - 
			\alpha\right) - \left(1 + \beta\right)L - \delta_{\ast} = \varepsilon\delta_{\ast},
		\end{equation*}
		this proves the first desired result. We next simplify $h\left(\alpha , \beta , \delta_{\ast} , 
		\tau_{\ast} , L\right) - \varepsilon\delta_{\ast}$ as follows
		\begin{align*}
        		h\left(\alpha , \beta , \delta_{\ast} , \tau_{\ast} , L\right) - \varepsilon\delta_{\ast} & 
        		= \left(1 - \varepsilon\right)\delta_{\ast} - \tau_{\ast}\alpha - L\beta \nonumber \\
			& = \left(1 - \varepsilon\right)\delta_{\ast} - \frac{\left(1 + \varepsilon\right)
			\delta_{\ast} + L\left(1 + \beta\right)}{1 - \alpha}\alpha - L\beta \nonumber \\
			& = \left(1 - \varepsilon\right)\delta_{\ast} - \frac{\left(1 + \varepsilon\right)
			\delta_{\ast}\alpha + L\left(1 + \beta\right)\alpha + L\beta\left(1 - \alpha\right)}{1 - 
			\alpha} \nonumber \\
			& = \left(1 - \varepsilon\right)\delta_{\ast} - \frac{\left(1 + \varepsilon\right)
			\delta_{\ast}\alpha + L\left(\alpha + \beta\right)}{1 - \alpha}.
        	\end{align*}	        
        	Thus, we only remain to show that
		\begin{equation*}
        		\left(1 - \varepsilon\right)\delta_{\ast} - \frac{\left(1 + \varepsilon\right)\delta_{\ast}
        		\alpha + L\left(\alpha + \beta\right)}{1 - \alpha} \geq 0.
        	\end{equation*}	    
        	Indeed, simple manipulations yields the following equivalent inequality    
		\begin{equation} \label{L:Technical:41}
        		\left(1 - \varepsilon - 2\alpha\right)\delta_{\ast} \geq L\left(\alpha + \beta\right).
        	\end{equation}	    
        	Using now \eqref{L:Technical:11} and the facts that $\alpha \leq {\bar \alpha}$, $\beta \leq 
        	\bar{\beta}$ and $0 < L \leq \lambda$ we obtain that \eqref{L:Technical:41} holds true. This 
        	completes the proof of the lemma.
	\end{proof}
\bibliographystyle{plain}
\bibliography{notes-1}

\def\cprime{$'$}
\begin{thebibliography}{10}

\bibitem{AharonEladBruckstein-KSVD}
M.~Aharon, M.~Elad, and A.~Bruckstein.
\newblock {K-SVD}: An algorithm for designing overcomplete dictionaries for
  sparse representation.
\newblock {\em Signal Processing, IEEE Transactions on}, 54(11):4311--4322,
  2006.

\bibitem{AlvarezAttouch2001}
F.~Alvarez and H.~Attouch.
\newblock An inertial proximal method for maximal monotone operators via
  discretization of a nonlinear oscillator with damping.
\newblock {\em Set-Valued Analysis}, 9(1-2):3--11, 2001.

\bibitem{AB2009}
H.~Attouch and J.~Bolte.
\newblock On the convergence of the proximal algorithm for nonsmooth functions
  involving analytic features.
\newblock {\em Math. Program.}, 116(1-2, Ser. B):5--16, 2009.

\bibitem{ABRS2010}
H.~Attouch, J.~Bolte, P.~Redont, and A.~Soubeyran.
\newblock Proximal alternating minimization and projection methods for
  nonconvex problems: an approach based on the {K}urdyka-{L} ojasiewicz
  inequality.
\newblock {\em Math. Oper. Res.}, 35(2):438--457, 2010.

\bibitem{ABS2013}
H.~Attouch, J.~Bolte, and B.~F. Svaiter.
\newblock Convergence of descent methods for semi-algebraic and tame problems:
  proximal algorithms, forward-backward splitting, and regularized
  {G}auss-{S}eidel methods.
\newblock {\em Math. Program.}, 137(1-2, Ser. A):91--129, 2013.

\bibitem{BT09}
A.~Beck and M.~Teboulle.
\newblock A fast iterative shrinkage-thresholding algorithm for linear inverse
  problems.
\newblock {\em SIAM J. Imaging Sci.}, 2(1):183--202, 2009.

\bibitem{BT89}
D.~P. Bertsekas and J.~N. Tsitsiklis.
\newblock {\em Parallel and Distributed Computation}.
\newblock Prentice-Hall International Editions, Englewood Cliffs, NJ, 1989.

\bibitem{BDL2006}
J.~Bolte, A.~Daniilidis, and A.~Lewis.
\newblock The {L}ojasiewicz inequality for nonsmooth subanalytic functions with
  applications to subgradient dynamical systems.
\newblock {\em SIAM J. Optim.}, 17(4):1205--1223, 2006.

\bibitem{BDLM10}
J.~Bolte, A.~Daniilidis, O.~Ley, and L.~Mazet.
\newblock Characterizations of {\l}ojasiewicz inequalities: subgradient flows,
  talweg, convexity.
\newblock {\em Trans. Amer. Math. Soc.}, 362(6):3319--3363, 2010.

\bibitem{BST2014}
J.~Bolte, S.~Sabach, and M.~Teboulle.
\newblock Proximal alternating linearized minimization for nonconvex and
  nonsmooth problems.
\newblock {\em Math. Program. Series A}, 146:459--494, 2014.

\bibitem{CW05}
P.~L. Combettes and V.~R. Wajs.
\newblock Signal recovery by proximal forward-backward splitting.
\newblock {\em Multiscale Model. Simul.}, 4(4):1168--1200, 2005.

\bibitem{Drori2013}
Y.~Drori and M.~Teboulle.
\newblock Performance of first-order methods for smooth convex minimization: a
  novel approach.
\newblock {\em Math. Program.}, 145(1-2, Ser. A):451--482, 2014.

\bibitem{Duchietal}
J.~Duchi, S.~Shalev-Shwartz, Y.~Singer, and T.Chandra.
\newblock Efficient projections onto the l1-ball for learning in high
  dimensions.
\newblock In {\em Proceedings of the 25th International Conference on Machine
  Learning}, ICML '08, pages 272--279, 2008.

\bibitem{Bot2015}
R.~.I.Bo{\c{T}} and E.~R. Csetnek.
\newblock An inertial tseng's type proximal algorithm for nonsmooth and
  nonconvex optimization problems.
\newblock {\em Journal of Optimization Theory and Applications}, pages 1--17,
  2015.

\bibitem{LS1999}
D.~D. Lee and H.~S. Seung.
\newblock Learning the parts of objects by nonnegative matrix factorization.
\newblock {\em Nature}, 401:788--–791, 1999.

\bibitem{Levin2009}
A.~Levin, Y.~Weiss, F.~Durand, and W.T. Freeman.
\newblock Understanding and evaluating blind deconvolution algorithms.
\newblock In {\em Computer Vision and Patter Recognition (CVPR)}, 2009.

\bibitem{LM79}
P.~L. Lions and B.~Mercier.
\newblock Splitting algorithms for the sum of two nonlinear operators.
\newblock {\em SIAM Journal on Applied Mathematics}, 16(6):964--979, 1979.

\bibitem{LiuNocedal89}
D.~C. Liu and J.~Nocedal.
\newblock On the limited memory {BFGS} method for large scale optimization.
\newblock {\em Mathematical Programming}, 45(1):503--528, 1989.

\bibitem{M2006-B}
B.~S. Mordukhovich.
\newblock {\em Variational analysis and generalized differentiation. {I}},
  volume 330 of {\em Grundlehren der Mathematischen Wissenschaften [Fundamental
  Principles of Mathematical Sciences]}.
\newblock Springer-Verlag, Berlin, 2006.
\newblock Basic theory.

\bibitem{M65}
J.~J. Moreau.
\newblock Proximit\'eet dualit\'e dans un espace hilbertien.
\newblock {\em Bull. Soc. Math. France}, 93:273--299, 1965.

\bibitem{N04}
Y.~Nesterov.
\newblock {\em Introductory Lectures on Convex Optimization}, volume~87 of {\em
  Applied Optimization}.
\newblock Kluwer Academic Publishers, Boston, MA, 2004.

\bibitem{N83}
Y.~E. Nesterov.
\newblock A method for solving the convex programming problem with convergence
  rate {$O(1/k\sp{2})$}.
\newblock {\em Dokl. Akad. Nauk SSSR}, 269(3):543--547, 1983.

\bibitem{Nocedal06}
J.~Nocedal and S.~J. Wright.
\newblock {\em Numerical Optimization}.
\newblock Springer, New York, 2nd edition, 2006.

\bibitem{OCBP2014}
P.~Ochs, Y.~Chen, T.~Brox, and T.Pock.
\newblock i{P}iano: inertial proximal algorithm for nonconvex optimization.
\newblock {\em SIAM J. Imaging Sci.}, 7(2):1388--1419, 2014.

\bibitem{Olshausen97}
B.~A. Olshausen and D.~J. Field.
\newblock Sparse coding with an overcomplete basis set: A strategy employed by
  v1?
\newblock {\em Vision Research}, 37(23):3311 -- 3325, 1997.

\bibitem{PT1994}
P.~Paatero and U.~Tapper.
\newblock Positive matrix factorization: A nonnegative factor model with
  optimal utilization of error estimates of data values.
\newblock {\em Environmetrics}, 5:111--–126, 1994.

\bibitem{Peharz}
R.~Peharz and F.~Pernkopf.
\newblock Sparse nonnegative matrix factorization with ℓ0-constraints.
\newblock {\em Neurocomputing}, 80(0):38 -- 46, 2012.

\bibitem{perrone_emmcvpr2015}
D.~Perrone, R.~Diethelm, and P.~Favaro.
\newblock Blind deconvolution via lower-bounded logarithmic image priors.
\newblock In {\em International Conference on Energy Minimization Methods in
  Computer Vision and Pattern Recognition (EMMCVPR)}, 2015.

\bibitem{Polyak64}
B.~T. Polyak.
\newblock Some methods of speeding up the convergence of iteration methods.
\newblock {\em USSR Computational Mathematics and Mathematical Physics},
  4(5):1--17, 1964.

\bibitem{B1964}
B.~T. Polyak.
\newblock Some methods of speeding up the convergence of iteration methods.
\newblock {\em \{U.S.S.R.\} Comput. Math. and Math. Phys.}, 4(5):1--17, 1964.

\bibitem{RW1998-B}
R.~T. Rockafellar and R.~J.-B. Wets.
\newblock {\em Variational analysis}, volume 317 of {\em Grundlehren der
  Mathematischen Wissenschaften [Fundamental Principles of Mathematical
  Sciences]}.
\newblock Springer-Verlag, Berlin, 1998.

\bibitem{orl}
F.~Samaria and A.~Harter.
\newblock Parameterisation of a stochastic model for human face identification.
\newblock In {\em WACV}, pages 138--142. IEEE, 1994.

\bibitem{SD2006}
S.~Sra and I.~S. Dhillon.
\newblock Generalized nonnegative matrix approximations with {B}regman
  divergences.
\newblock In Y.~Weiss, B.~Sch\"{o}lkopf, and J.C. Platt, editors, {\em Advances
  in Neural Information Processing Systems 18}, pages 283--290. MIT Press,
  2006.

\bibitem{Xu2014}
Y.~Xu and W.~Yin.
\newblock A globally convergent algorithm for nonconvex optimization based on
  block coordinate update.
\newblock Technical report, Arxiv preprint, 2014.

\bibitem{ZK93}
S.K. Zavriev and F.V. Kostyuk.
\newblock Heavy-ball method in nonconvex optimization problems.
\newblock {\em Computational Mathematics and Modeling}, 4(4):336--341, 1993.

\bibitem{Zeiler10}
M.~Zeiler, D.~Krishnan, G.~Taylor, and R.~Fergus.
\newblock Deconvolutional networks.
\newblock In {\em {IEEE} Conference on Computer Vision and Pattern Recognition
  (CVPR)}, pages 2528--2535, 2010.

\end{thebibliography}

\end{document}